\renewcommand{\bibitempages}[1]{}
\newtheorem{thm}{\sc Theorem}[section]
\newtheorem{lem}[thm]{\sc Lemma}
\newtheorem{prop}[thm]{\sc Proposition}
\theoremstyle{definition}
\theoremstyle{remark}
\newtheorem{rem}[thm]{\it Remark}
\newtheorem*{pf}{\it Proof}
\numberwithin{equation}{section}
\newcommand{\R}{\mathbb{R}^N}
\newcommand{\RR}{\mathbb{R}}
\newcommand{\rn}{\int_{\mathbb{R}^N}}
\begin{document}
\title[quasilinear Schr\"{o}dinger equation]{Existence and concentration of nontrivial solutions for quasilinear Schr\"{o}dinger equation with indefinite potential}

\author[Lifeng Yin, Xiaoqi Liu and Yongyong Li]{Lifeng Yin\,$^\mathrm{a}$, Xiaoqi Liu\,$^\mathrm{b}$, Yongyong Li\,$^\mathrm{c}$\vspace{-1em} }
\dedicatory{$^\mathrm{a}$School of Mathematical Sciences, Sichuan Normal University\\
 Chengdu 610039, China\\
 $^\mathrm{b}$School of Mathematics and Statistics, Anyang Normal University\\ Anyang 455002, China\\
 $^\mathrm{c}$College of Mathematics and Statistics, Northwest Normal University\\ Lanzhou, 730070, China }
\thanks{$^*$
Corresponding author. E-mail address: yin136823@163.com (L.-F. Yin)}
\maketitle
\vspace{-5mm}

\begin{abstract}
This paper is concerned with the quasilinear Schr\"{o}dinger equation
\begin{align*}
-\Delta u+V(x)u+\frac{k}{2}\Delta(u^2)u=f(u)\quad \text{in}~~\mathbb{R}^N\text{,}
\end{align*}
where $N\geq 3$, $k>0$, $V\in C(\R)$ is an indefinite potential. Under structural conditions on the potential $V$ and the nonlinearity $f$, we establish the existence of a nontrivial solution through a combination of a local linking argument, Morse theory, and the Moser iteration.  Moreover, if $f$ is odd, we obtain an unbounded sequence of nontrivial solutions via the symmetric Mountain Pass Theorem. Additionally, as $k\rightarrow0$, we analyze the concentration behavior of nontrivial solutions.
\\

\noindent Keywords. Quasilinear Schr\"{o}digner equation; Indefinite potential; Nontrivial solutions; Critical groups
\end{abstract}

\section{Introduction}
In this paper, we consider quasilinear Schr\"{o}dinger equation of the form
\begin{align}\label{eq1}
-\Delta u+V(x)u+\frac{k}{2}\Delta(u^2)u=f(u)\quad \text{in}~~\mathbb{R}^N\text{,}
\end{align}
where $k>0$, $N\geq 3$ and $V\in C(\mathbb{R}^N)$ is allowed to be indefinite.\par
It is well know that the solutions for Eq. \eqref{eq1} are closely related to the standing wave solutions $z(t,x)=e^{-i\omega t}u(x)$ to the following quasilinear Schr\"{o}dinger equation
\begin{align}\label{eq11}
  i\partial_tz=-\Delta z+W(x)z+\frac{k}{2}\Delta(|z|^2)z-\widetilde{f}(|z|^2)z\text{,} \quad (t,x)\in\mathbb{R}^+\times\mathbb{R}^N\text{,}
\end{align}
where $W: \mathbb{R}^N\rightarrow\mathbb{R}$ is a given potential and $\widetilde{f}(|u|^2)u=f(u)$ is a real function. The form of Eq. \eqref{eq11} arises in mathematical physics and is used for the superfluid film equation in plasma physics by Kurihara \cite{2}.

For the case of $k=0$, Eq. \eqref{eq1} turns out to be a semilinear Schr\"{o}dinger equation, which has been widely investigated. In recent years, mathematical studies have focused on the existence of nontrivial solutions, ground state solutions, and multiplies solutions for \eqref{eq1}, see for example \cite{MR2957647,MR3622270,MR3547688,MR2271695,Liu2024} and references therein.

For the case of $k<0$, there are few results for Eq. \eqref{eq1}. Such as, Poppenberg et al. \cite{MR1899450} considered the one-dimensional case for nonlinearity of the form $f(u)=|u|^{p-2}u$. Liu and Wang \cite{MR1933335} extended to the higher dimensional case. Note that the potential $V$ in \eqref{eq1} is given by
\begin{align}\label{v1}V(x)=W(x)-\omega\text{.} \end{align}
If the frequency $\omega$ in \eqref{v1} is large, then $V$ may be indefinite in sign. In \cite{MR3843290}, Liu and Zhou established the existence of nontrivial solution by local linking theorem and Morse theory when $V$ is the indefinite potential and $f$ satisfies some suitable conditions. For the other related results for \eqref{eq1}, we can see \cite{MR3918380,MR2659677,MR2580150,MR2927594,MR3003301,MR3327366,Huang2025,Gao2025}.\par

For $k>0$, so many papers \cite{MR3659371,MR3780725,MR3335928,MR4089641} are focused to studying
\begin{align}\label{v11}
\inf V>0\text{,}
\end{align}
i.e., the frequency $\omega$ is small. However, up to the best of our knowledge, there seem to be no results for the case $k>0$ and $V$ is an indefinite potential. We will consider that the assumption \eqref{v11} doesn't hold and $k>0$. Moreover, we will find the existence and concentration of nontrivial solutions and infinitely many solutions. \par

Next, we give the following conditions on $V$ and $f$.
\begin{enumerate}
  \item[$(V)$] $V\in C(\R)$, $\inf \limits_{x\in\R}V(x)>-\infty$, and $\lim \limits_{|x|\rightarrow\infty}V(x)=+\infty\text{.}$
  \item[$(f_1)$] $f(s)=o(s)$ as $s\rightarrow 0$.
  \item[$(f_2)$] $f\in C(\RR)$, and
   $\limsup \limits_{|s|\rightarrow\infty}\frac{f(s)}{|s|^{p-1}}=0$ for some $p\in (2,2^*)$, where $2^*=\frac{2N}{N-2}$.
  \item[$(f_3)$] There exits $\mu>\frac{8}{3}$ such that  for $s\neq 0$ there holds $0<\mu F(s)\leq sf(s)$, where $F(t)=\int_0^tf(s)ds$.
\end{enumerate}

From $(V)$, there exists a constants $m>0$ such that
\begin{align}\label{v2}\widetilde{V}(x):=V(x)+m>0\end{align}
for all $x\in\R$. For the subspace
\[E=\left\{u\in H^1(\R): \rn V(x)u^2<\infty\right\}\]
we equip the inner product
\[(u,v)=\rn \left(\nabla u\cdot\nabla v+\widetilde{V}(x)uv\right)\]
and the corresponding norm $\|u\|=(u,u)^{\frac{1}{2}}$. Obviously, the Euler-Lagrange functional associated with  Eq. \eqref{eq1} is given by
\[\mathcal{I}(u)=\frac{1}{2}\rn\left((1-ku^2) |\nabla u|^2+V(x)u^2\right)-\rn F(u)\text{.}\]
Unfortunately, the functional $\mathcal{I}$ is not well defined in $H^1(\mathbb{R}^N)$ for $N\geq 3$ and $k\neq 0$, the other difficulty is to guarantee the positiveness of the principle part, that is, $1-ku^2>0$.

We employ an approach developed in \cite{MR3780725} to introduce a variational framework associated with Eq. \eqref{eq1}. For $\alpha=\frac{9+3\sqrt{5}}{2}$, we use the following variable $g:[0,+\infty)\rightarrow\RR$,
\begin{align}\label{g1}
  g(s)=\left\{
  \begin{array}{lll}
\sqrt{1-k s^2}\text{,} \qquad \text{if}~~0\leq s<\sqrt{\frac{1}{\alpha k}}\text{,} \\
\frac{1}{\beta k s^2}+\frac{\sqrt{3}}{2}\text{,} \qquad \text{if}~~s\geq \sqrt{\frac{1}{\alpha k}}
  \end{array}
  \right.
\end{align}
where $\beta=2\alpha^{\frac{3}{2}}\sqrt{\alpha-1}$ and $g(s)=g(-s)$ if $s\leq 0$. It follows that $g\in C^1(\RR,(\frac{\sqrt{3}}{2},1])$, where $g$ is an even function, increasing in $(-\infty, 0)$ and decreasing in $[0,+\infty)$. Once $|u|_\infty\leq \sqrt{\frac1{\alpha k}}$, Eq. \eqref{eq1} can be  the following elliptic equation (see \cite{MR3010765,MR3628036,MR3688459}),
\begin{align}\label{eq3}
 - \text{div}~(g^2(u)\nabla u)+g(u)g'(u)|\nabla u|^2+V(x)u=f(u)\quad \text{in}~~\mathbb{R}^N\text{.}
\end{align}
 The energy functional associated with Eq. \eqref{eq3} is given by
\begin{align}
  \label{e1}
  \mathcal{I}_k(u)=\frac{1}{2}\rn\left( g^2(u)|\nabla u|^2+V(x)u^2\right)-\rn F(u)\text{.}
\end{align}

To prove the existence of nontrivial solutions for Eq. \eqref{eq3}, we first define a variable change that allows us to transform  Eq. \eqref{eq3} into a new problem. Let $G(t)=\int_0^tg(s) ds$, we deduce that the inverse function $G^{-1}(t)$ exists and it is an odd function, also $G, G^{-1}\in C^2$. By $(f_2)$ and the properties of transformation $u=G^{-1}(v)$, \eqref{e1} reduces to the following functional
\begin{align*}
  \mathcal{J}_k(v)=\frac{1}{2}\rn \left[|\nabla v|^2+V(x)(G^{-1}(v))^2\right]-\rn F(G^{-1}(v))\text{.}
\end{align*}
Similar to \cite[Proposition 2.4]{MR4089641}, it is easy to see that $\mathcal{J}_k$ is well defined and is of class $C^1$. Then if $v_k$ is a critical point of $\mathcal{J}_k$, then $u_k=G^{-1}(v_k)$ is a solution of Eq. \eqref{eq3}. Moreover, we study the $L^\infty$-estimate for solutions of Eq. \eqref{eq3} and find a nontrivial solution $u_k$ of Eq. \eqref{eq1} with $\|u_k\|_\infty\leq\sqrt{\frac{1}{\alpha k}}$.\par

 By $(V)$, it is a well known that the embedding $E\hookrightarrow L^q(\R)$ for $q\in [2,2^*)$ is compact, see e.g., \cite{MR1162728}. Therefore, the spectral theory of self-adjoint compact operators implies that the eigenvalue problem
\begin{align}
  \label{eigen}
  -\Delta u+V(x)u=\lambda u\text{,} \qquad u\in E\text{,}
\end{align}
possesses a complete sequence of eigenvalues
\[-\infty<\lambda_1\leq\lambda_2\leq\cdots\text{,} \qquad \lambda_i\rightarrow+\infty.\]
Each $\lambda_i$ has been repeated in the sequence according to its finite multiplicity. We denote by $\varphi_i$ the eigenfunction of $\lambda_i$ with $|\varphi_i|_2=1$.

Here are the main results of our paper for Eq. \eqref{eq1}:

\begin{thm}
  \label{th1}
  Assume that $(V)$ and $(f_1)-(f_3)$ hold. If $0$ is not an eigenvalue of $-\Delta+V$, then Eq. \eqref{eq1} has a nontrivial solution. 
\end{thm}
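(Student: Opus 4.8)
The plan is to work with the transformed functional $\mathcal{J}_k$ on the Hilbert space $E$ and produce a nontrivial critical point via a local linking argument combined with Morse theory, exactly as in the scheme of \cite{MR3843290}. First I would set up the splitting $E = E^- \oplus E^0 \oplus E^+$ according to the spectrum of $-\Delta + V$: since $0$ is not an eigenvalue, $E^0 = \{0\}$ and $E = E^- \oplus E^+$ with $E^-$ finite-dimensional (spanned by the eigenfunctions with $\lambda_i < 0$) and the quadratic form $Q(v) = \int_{\mathbb{R}^N}(|\nabla v|^2 + V(x)v^2)$ negative definite on $E^-$ and positive definite (with a spectral gap) on $E^+$. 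Using $(f_1)$ and $(f_2)$, together with the asymptotics $G^{-1}(v) = v + o(v)$ near $0$ (so that $(G^{-1}(v))^2 = v^2 + o(v^2)$ and $F(G^{-1}(v)) = o(v^2)$), one checks that $\mathcal{J}_k$ has a local linking at $0$ with respect to this splitting: $\mathcal{J}_k(v) \le 0$ for $v \in E^-$ small and $\mathcal{J}_k(v) \ge 0$ for $v \in E^+$ small. This is routine once the elementary estimates on $G^{-1}$ are in place.

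Next I would verify the compactness condition. Using $(f_3)$ in the form $0 < \mu F(s) \le s f(s)$ with $\mu > 8/3$, and the key algebraic facts that $\tfrac12 \le g(G^{-1}(v))\,\tfrac{d}{dv}G^{-1}(v)$-type quantities are controlled and that $G^{-1}(v)\,(G^{-1})'(v)$ behaves comparably to $v$ for large $v$ (the constant $\alpha = \tfrac{9+3\sqrt5}{2}$ and $\mu > 8/3$ are precisely tuned so the Ambrosetti–Rabinowitz-type computation closes), one shows any Palais–Smale — in fact Cerami — sequence for $\mathcal{J}_k$ is bounded in $E$; the compact embedding $E \hookrightarrow L^q(\mathbb{R}^N)$ for $q \in [2, 2^*)$ furnished by $(V)$ then upgrades boundedness to a convergent subsequence, so $\mathcal{J}_k$ satisfies (PS) (or (C)) on all of $E$. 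I would also record that $\mathcal{J}_k(0) = 0$ and that $0$ is an isolated critical point if no nontrivial solution exists — the hypothesis we argue by contradiction against.

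Then comes the Morse-theoretic core. Arguing by contradiction, suppose $0$ is the only critical point of $\mathcal{J}_k$. On one hand, the local linking structure forces the critical group $C_{d_-}(\mathcal{J}_k, 0) \ne 0$, where $d_- = \dim E^-$ (this is the Liu local-linking/critical-group lemma). On the other hand, since $\mathcal{J}_k$ is unbounded below on the finite-dimensional $E^-$ but its behavior at infinity is governed by the superquadratic term via $(f_3)$, one computes the critical groups "at infinity" and finds $C_q(\mathcal{J}_k, \infty) = 0$ for all $q$ (or a value incompatible with $C_{d_-}(\mathcal{J}_k,0)$); combining with the Morse relation $\sum_q (-1)^q \operatorname{rank} C_q(\mathcal{J}_k, 0) = \sum_q (-1)^q \operatorname{rank} C_q(\mathcal{J}_k, \infty)$ yields a contradiction. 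Hence $\mathcal{J}_k$ has a nontrivial critical point $v_k$.

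Finally I would pass back: $u_k = G^{-1}(v_k)$ solves \eqref{eq3}, and the main remaining task — which I expect to be the genuinely technical obstacle — is the $L^\infty$-estimate. One must run a Moser iteration on the equation solved by $u_k$ to obtain $\|u_k\|_\infty \le \sqrt{1/(\alpha k)}$; only then does $g(u_k)^2 = 1 - k u_k^2$ on the relevant range, so that \eqref{eq3} coincides with the original quasilinear equation \eqref{eq1} and $u_k$ is an honest nontrivial solution of \eqref{eq1}. The delicate point is that the Moser iteration here involves the degenerate-at-the-boundary coefficient $g^2(u)$ and the gradient term $g(u)g'(u)|\nabla u|^2$, so the test-function choices and the bootstrap exponents must be handled with care; the constant $\alpha$ in the definition of $g$ is chosen so the resulting bound lands exactly at the threshold $\sqrt{1/(\alpha k)}$. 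With the $L^\infty$ bound in hand the proof is complete.
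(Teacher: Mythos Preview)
Your Morse-theoretic core matches the paper exactly: Cerami condition, local linking at $0$ with respect to $E=E^-\oplus E^+$ giving $C_\nu(\mathcal{J}_k,0)\neq 0$, vanishing critical groups at infinity $C_l(\mathcal{J}_k,\infty)=0$ for all $l$, and then Bartsch--Li (Proposition~\ref{pro2}) to produce a nontrivial critical point $v_k$ of $\mathcal{J}_k$.

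The $L^\infty$ step, however, diverges from the paper and contains a gap. First, the paper does \emph{not} run Moser on the quasilinear equation for $u_k$; it works instead with the \emph{semilinear} equation satisfied by $v_k$, namely $-\Delta v_k=a(x)v_k$ with $a(x)=\frac{f(G^{-1}(v_k))}{g(G^{-1}(v_k))v_k}-V(x)\frac{G^{-1}(v_k)}{g(G^{-1}(v_k))v_k}$. This completely sidesteps the degenerate coefficient and gradient term you flag as delicate: Br\'ezis--Kato and interior $L^q$-estimates handle a compact ball, and a standard Moser scheme on $B_R^c$ (where $V\geq M>0$) handles the exterior. Second, and this is the missing idea, the iteration only yields $\|v_k\|_\infty\leq C_0$ with $C_0$ depending on $\|v_k\|$. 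To conclude $\|u_k\|_\infty\leq\tfrac{2}{\sqrt3}\,C_0\leq\sqrt{1/(\alpha k)}$ one needs $C_0$ to be \emph{independent of $k$}, after which one restricts to $k\in(0,\hat k]$ with $\hat k=\tfrac{3}{4\alpha C_0^2}$; the bound does not ``land exactly at the threshold'' by design of $\alpha$ (that constant is chosen merely to make $g\in C^1$). The paper secures this uniformity via Lemmas~\ref{bk}--\ref{lemm}: first a uniform upper bound $b_k\leq b$ on the least nontrivial critical level, obtained by testing $\mathcal{I}_k$ along a fixed ray in $E^+$, then an argument (parallel to the Cerami boundedness proof) converting the energy bound into a $k$-independent bound on $\|v_k\|$. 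Your outline omits this step; without it the iteration constant depends on $k$ through $\|v_k\|$, and the passage from \eqref{eq3} back to \eqref{eq1} cannot be closed.
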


\begin{rem}
  To the best of our knowledge, there seems to be no literature considering the existence of nontrivial solutions for
  quasilinear Schr\"{o}dinger equation with indefinite potential. In particular, $(f_3)$ satisfies the Ambrosetti-Rabinowitz type condition. Note that in $(f_3)$ we only requrie $\mu>\frac{8}{3}$, our assumptions on the nonlinearity $f$ are weaker than those required in \cite{MR3843290}.
\end{rem}

\begin{thm}
  \label{th21}
  If $(V)$, $(f_1)-(f_3)$ hold and $0$ is not an eigenvalue of $-\Delta+V$. In addition, $f$ is odd, then
  \begin{enumerate}[$(i)$]
    \item  for any $m\in\mathbb{N}$ fixed, there exists $k_0>0$ such that Eq. \eqref{eq1} has at least $m$ weak solutions $\{u_{k,j}\}_{j=1}^m$ with $k\in(0,k_0)$.
    \item As $k\rightarrow 0^+$, $u_{k,j}$ converges to a solution $u_{0,j}$ for the following elliptic equation
  \begin{align}\label{eq33}
    -\Delta u+V(x)u=f(u)\text{,} \quad u\in H^1(\mathbb{R}^N)
 \end{align}
 with $\mathcal{I}_0(u_{0,j})\rightarrow\infty$ as $j\rightarrow\infty$.
  \end{enumerate}

\end{thm}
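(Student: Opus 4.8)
The plan is to work with the transformed functional $\mathcal{J}_k$, which is $C^1$ on $E$ and, since $f$ is odd (hence $F$ even) and $G^{-1}$ is odd, is \emph{even} with $\mathcal{J}_k(0)=0$. Because $0$ is not an eigenvalue of $-\Delta+V$, there are only finitely many negative eigenvalues $\lambda_1\le\cdots\le\lambda_\ell<0<\lambda_{\ell+1}\le\cdots$, and one has the splitting $E=E^-\oplus E^+$ with $E^-=\operatorname{span}\{\varphi_1,\dots,\varphi_\ell\}$ finite-dimensional and the quadratic form $v\mapsto\int_{\mathbb{R}^N}(|\nabla v|^2+V v^2)$ positive definite on $E^+$. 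From $g\in(\tfrac{\sqrt3}2,1]$ with $g(0)=1$ one gets the $k$-independent bounds $1\le(G^{-1})'(t)\le\tfrac2{\sqrt3}$, $t\le G^{-1}(t)\le\tfrac2{\sqrt3}t$ for $t\ge0$, hence $\tfrac{\sqrt3}{2}\le\frac{(G^{-1})'(v)\,v}{G^{-1}(v)}\le\tfrac2{\sqrt3}$; together with $(f_1)$--$(f_2)$ I would verify the geometric hypotheses of the symmetric minimax scheme: $\mathcal{J}_k|_{\partial B_\rho\cap E^+}\ge\alpha$ for some $\rho,\alpha>0$ independent of $k\in(0,1]$, and $\mathcal{J}_k\to-\infty$ on every finite-dimensional subspace of $E$. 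The Ambrosetti--Rabinowitz condition $(f_3)$ gives both this last property and the $(PS)$ condition: boundedness of $(PS)$ sequences follows by estimating $\mathcal{J}_k(v)-\tfrac1\theta\langle\mathcal{J}_k'(v),v\rangle$ — here $\theta$ must satisfy $\tfrac4{\sqrt3}\le\theta\le\tfrac{\sqrt3}{2}\mu$ to absorb the potential term and the $F$-term respectively, which is possible precisely because $\mu>\tfrac83$ — and then compactness is supplied by the compact embedding $E\hookrightarrow L^q(\mathbb{R}^N)$, $q\in[2,2^*)$.

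These are the hypotheses of the symmetric Mountain Pass Theorem for even functionals with a finite-dimensional negative space (equivalently a Fountain-type theorem with $E^-$ placed in the first block of $E=\overline{\bigoplus_{i\ge0}X_i}$, $X_i=\mathbb{R}\varphi_{\ell+i}$ for $i\ge1$). It produces critical points $v_{k,j}$ of $\mathcal{J}_k$ at levels $c_{k,j}$ satisfying $0<\alpha\le a_j\le c_{k,j}$, with $a_j$ independent of $k$ and $a_j\to\infty$; then $u_{k,j}:=G^{-1}(v_{k,j})$ solves Eq. \eqref{eq3}. The decisive point is that all constants are uniform in $k$: the lower bounds $a_j$ come from coercivity of $\mathcal{J}_k$ on the ``upper'' subspaces $Z_j\subset E^+$ (for large $j$ this uses only $\lambda_{\ell+j}\to\infty$ and the $k$-free bounds above), while $c_{k,j}\le\max_{Y_j}\mathcal{J}_k$ with $Y_j=E^-\oplus\operatorname{span}\{\varphi_{\ell+1},\dots,\varphi_{\ell+j}\}$ a fixed finite-dimensional space on which $\mathcal{J}_k\to\mathcal{I}_0$ (the energy functional of Eq. \eqref{eq33}) uniformly on bounded sets as $k\to0^+$; hence for each fixed $j$ one has $c_{k,j}\le C_j$ for all small $k$.

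For part $(i)$: the $(PS)$-boundedness estimate turns $c_{k,j}\le C_j$ into $\|v_{k,j}\|\le\widetilde C_j$, uniformly for small $k$, and a Moser iteration applied to the uniformly elliptic semilinear equation satisfied by $v_{k,j}$, namely $-\Delta v_{k,j}+\widetilde V(x)\frac{G^{-1}(v_{k,j})}{g(G^{-1}(v_{k,j}))}=m\frac{G^{-1}(v_{k,j})}{g(G^{-1}(v_{k,j}))}+\frac{f(G^{-1}(v_{k,j}))}{g(G^{-1}(v_{k,j}))}$, whose right-hand side is $\le C(|v_{k,j}|+|v_{k,j}|^{p-1})$ with $C$ independent of $k$, yields $\|u_{k,j}\|_\infty\le\tfrac2{\sqrt3}\|v_{k,j}\|_\infty\le\widehat C_j$, again uniformly for small $k$. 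Given $m\in\mathbb{N}$, choose $k_0>0$ with $\sqrt{1/(\alpha k_0)}>\max_{1\le j\le m}\widehat C_j$; then for every $k\in(0,k_0)$ the functions $u_{k,1},\dots,u_{k,m}$ obey $\|u_{k,j}\|_\infty\le\sqrt{1/(\alpha k)}$, hence solve Eq. \eqref{eq1}, are nontrivial (as $c_{k,j}\ge\alpha>0=\mathcal{J}_k(0)$), and can be taken geometrically distinct (a critical value of multiplicity $r$ carries a critical set of genus $\ge r$). For part $(ii)$: fix $j$ and let $k_n\to0^+$; since $\|v_{k_n,j}\|\le\widetilde C_j$, along a subsequence $v_{k_n,j}\rightharpoonup u_{0,j}$ in $E$ and $v_{k_n,j}\to u_{0,j}$ in $L^q(\mathbb{R}^N)$ and a.e. Testing the equations against $v_{k_n,j}-u_{0,j}$ and using $G^{-1}\to\mathrm{id}$ with its uniform Lipschitz bound, $(f_1)$--$(f_2)$ and the compact embedding, I would upgrade this to $v_{k_n,j}\to u_{0,j}$ strongly in $E$; letting $n\to\infty$ in the weak formulation shows $u_{0,j}\in H^1(\mathbb{R}^N)$ solves Eq. \eqref{eq33}, it is nontrivial because $\mathcal{I}_0(u_{0,j})=\lim_n c_{k_n,j}\ge\alpha>0$, and since $\lim_n c_{k_n,j}\ge a_j\to\infty$ we conclude $\mathcal{I}_0(u_{0,j})\to\infty$ as $j\to\infty$.

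The main obstacle is the uniformity in $k$ of the quantitative estimates as $k\to0^+$ — the geometric constants $\rho,\alpha$, the $(PS)$-boundedness constant, the Moser constant, and especially the minimax levels $c_{k,j}$ (both their uniform positivity and blow-up in $j$ and their uniform boundedness above for fixed $j$) — which is what reconciles the growing energy $c_{k,j}$ with the shrinking admissible sup-norm $\sqrt{1/(\alpha k)}$, once $m$ is fixed before $k\to0^+$; carrying the change of variables $u=G^{-1}(v)$ along and relying on the $k$-free bounds $1\le(G^{-1})'\le\tfrac2{\sqrt3}$, $t\le G^{-1}(t)\le\tfrac2{\sqrt3}t$ is what makes it go through. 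A secondary delicate point is establishing $\mathcal{J}_k\to\mathcal{I}_0$ with enough uniformity on the a priori possibly unbounded minimax level sets to get $c_{k,j}\to c_{0,j}$; it is avoided by using only the one-sided estimates $a_j\le c_{k,j}\le\max_{Y_j}\mathcal{J}_k$, which suffice for both parts.
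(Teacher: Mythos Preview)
Your plan is correct and follows essentially the same architecture as the paper: produce a sequence of critical points of the even functional $\mathcal{J}_k$ via a symmetric minimax scheme with levels $c_{k,j}\in[\alpha_j,\beta_j]$ independent of $k$, convert the level bound into a $k$-uniform $H^1$-bound, run Moser iteration for a $k$-uniform $L^\infty$-bound, and then choose $k_0$ so that $\|u_{k,j}\|_\infty\le\sqrt{1/(\alpha k)}$ for $j\le m$; for part (ii) pass to the limit $k\to0^+$ using these uniform bounds.

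There are only minor technical differences worth noting. First, for the minimax the paper works directly with genus-type classes $\Gamma_j$ and an intersection lemma rather than quoting a Fountain-type theorem; the content is the same. Second, for the $k$-independent upper bound $\beta_j$ the paper does not argue via ``$\mathcal{J}_k\to\mathcal{I}_0$ uniformly on bounded sets'' but simply dominates $\mathcal{J}_k$ by a fixed $k$-free functional $\Phi(v)=\tfrac12\int(|\nabla v|^2+\tfrac43\widetilde V v^2)-\int_{|v|\ge1}|v|^\mu$ using $|v|\le|G^{-1}(v)|\le\tfrac{2}{\sqrt3}|v|$; this is cleaner because it avoids having to locate where on $Y_j$ the maximum is attained. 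Third, for the $(PS)$/Cerami boundedness the paper tests against $\psi=g(G^{-1}(v))\,G^{-1}(v)$ instead of $v$, which removes the $\sqrt3$-factors and gives the sharper combination $\mu\,\mathcal{J}_k(v)-\langle\mathcal{J}_k'(v),\psi\rangle\ge(\tfrac{\mu}{2}-1)\zeta^2-(\tfrac{\mu}{2}-1)m\!\int(G^{-1}(v))^2$; your $\theta$-window $[\tfrac{4}{\sqrt3},\tfrac{\sqrt3}{2}\mu]$ also works under $\mu>\tfrac{8}{3}$, but either way one still needs the contradiction argument (normalize by $\zeta_n$, show the weak limit is nonzero, apply Fatou) to conclude boundedness, as the inequality alone does not control $\zeta$. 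Finally, for part (ii) the paper passes through $u_n=G^{-1}(v_n)$ and tests with $\varphi=g(u_n)\psi$ to recover the limit equation, then gets strong convergence by comparing $\langle\mathcal{J}_{k_n}'(v_n),g(u_n)u_n\rangle=0$ with $\langle\mathcal{I}_0'(u_0),u_0\rangle=0$; your route of testing against $v_{k_n,j}-u_{0,j}$ is equally viable given the compact embedding and the uniform $L^\infty$-bound.
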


\begin{rem}
   Little is known about the asymptotic behavior of solutions for indefinite quasilinear Schr\"{o}dinger equations. In this work, we analyze the behavior of nontrivial solutions $u_k$ as $k\rightarrow 0^+$ and there exists a $u\in H^1(\mathbb{R}^N)$ such that $u$ is indeed a solution of Eq. \eqref{eq33}.
\end{rem}

\begin{thm}
  \label{th2}
  Suppose that $(V)$, $(f_2)$ and $(f_3)$ hold. If $f$ is odd, then Eq. \eqref{eq1} has a sequence of solutions $\{u_n\}$ such that $\mathcal{I}(u_n)\rightarrow\infty$.
\end{thm}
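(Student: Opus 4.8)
The plan is to establish Theorem \ref{th2} by applying the symmetric Mountain Pass Theorem (the $\mathbb{Z}_2$-version, in the form due to Ambrosetti–Rabinowitz / Rabinowitz) to the transformed functional $\mathcal{J}_k$, and then to pull the resulting solutions back through the change of variables $u=G^{-1}(v)$ together with the $L^\infty$-estimate described in the introduction. Note that here, unlike in Theorems \ref{th1} and \ref{th21}, we do \emph{not} assume that $0$ avoids the spectrum of $-\Delta+V$, nor do we impose $(f_1)$; the hypotheses $(f_2)$, $(f_3)$ together with oddness of $f$ are exactly what is needed for the symmetric linking geometry and the Palais–Smale condition, so the argument is purely variational with no Morse-theoretic input.

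First I would record the structural facts about $G$ and $G^{-1}$ coming from $g\in C^1(\mathbb{R},(\tfrac{\sqrt3}{2},1])$: since $\tfrac{\sqrt3}{2}\le g\le 1$, one has $\tfrac{\sqrt3}{2}|t|\le |G^{-1}(t)|\le |t|$ and $|(G^{-1})'(t)|=1/g(G^{-1}(t))\le 2/\sqrt3$, so $G^{-1}$ is Lipschitz, odd, and comparable to the identity. Consequently $\mathcal{J}_k$ is an even $C^1$ functional on $E$ (well-definedness and $C^1$ regularity are quoted from the analogue of \cite[Proposition 2.4]{MR4089641}), and $\mathcal{J}_k(0)=0$. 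Then I would verify the two geometric conditions. For the local condition near $0$: using $(f_2)$ to get $|F(s)|\le \varepsilon s^2 + C_\varepsilon |s|^p$, the comparison $|G^{-1}(v)|\le|v|$, the fact that $V(x)(G^{-1}(v))^2 \ge -\,m\,(G^{-1}(v))^2 \ge \|v\|^2 - \text{(positive part)}$ rewritten via $\widetilde V$, and the compact embedding $E\hookrightarrow L^q$, one shows $\mathcal{J}_k(v)\ge \rho>0$ on a sphere $\|v\|=r$ in a suitable closed subspace of finite codimension — more precisely, decomposing $E=E^-\oplus E^+$ according to the (finitely many) nonpositive eigenvalues of $-\Delta+V$, one gets the required "$\inf_{S_r\cap Y}\mathcal{J}_k>0$" with $Y$ of finite codimension. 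For the condition at infinity: fix any finite-dimensional subspace $W\subset E$ spanned by eigenfunctions $\varphi_1,\dots,\varphi_\ell$; on $W$ all norms are equivalent, $|G^{-1}(v)|\ge \tfrac{\sqrt3}{2}|v|$, and by $(f_3)$ there are constants $c_1,c_2>0$ with $F(s)\ge c_1|s|^\mu - c_2$, hence
\[
\mathcal{J}_k(v)\le \tfrac12\|v\|^2 + \tfrac{m}{2}|v|_2^2 - c_1\!\!\rn\! |G^{-1}(v)|^\mu + C \le \tfrac12\|v\|^2 - c_1(\tfrac{\sqrt3}{2})^\mu |v|_\mu^\mu + C\|v\|_2^2 + C,
\]
which tends to $-\infty$ as $\|v\|\to\infty$ in $W$ because $\mu>2$; so $\mathcal{J}_k\le 0$ outside a large ball of $W$.

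Next I would check the Palais–Smale condition for $\mathcal{J}_k$ at every level. Given a sequence $(v_n)$ with $\mathcal{J}_k(v_n)\to c$ and $\mathcal{J}_k'(v_n)\to 0$, the standard computation of $\mu\,\mathcal{J}_k(v_n)-\langle \mathcal{J}_k'(v_n),\,\theta(v_n)\rangle$ with the test function $\theta(v)=v$ (or the slightly adjusted $\theta(v)= G^{-1}(v)g(G^{-1}(v))$ that linearizes the nonlinear term, as is customary for this class of problems) together with $(f_3)$ and the bounds $\tfrac{\sqrt3}{2}\le g\le 1$ yields a uniform bound on $\rn |\nabla v_n|^2$ and on $\rn \widetilde V (G^{-1}(v_n))^2$; combined with $|G^{-1}(v_n)|\ge\tfrac{\sqrt3}{2}|v_n|$ this bounds $\|v_n\|$. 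Passing to a subsequence $v_n\rightharpoonup v$ in $E$, the compact embedding $E\hookrightarrow L^q$ for $q\in[2,2^*)$ kills the lower-order terms, and the quadratic gradient part gives strong convergence in the usual way, so $v_n\to v$ in $E$. Having PS$_c$ for all $c$, the symmetric Mountain Pass Theorem produces an unbounded sequence of critical values $c_n\to\infty$ of $\mathcal{J}_k$, hence critical points $v_n$ with $\mathcal{J}_k(v_n)\to\infty$. Setting $u_n=G^{-1}(v_n)$, Moser iteration as in the proof of Theorem \ref{th1} gives $\|u_n\|_\infty<\sqrt{1/(\alpha k)}$ — here one must confirm that the iteration constants depend only on $\|v_n\|$ through the energy level in a way that still closes, which it does since the $L^\infty$ bound in the Moser scheme is governed by the $H^1$-norm locally and the potential grows at infinity — so each $u_n$ actually solves \eqref{eq3}, equivalently \eqref{eq1}, and $\mathcal{I}(u_n)=\mathcal{I}_k(u_n)=\mathcal{J}_k(v_n)\to\infty$.

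The main obstacle I expect is the Palais–Smale verification, specifically extracting a uniform $H^1$-bound from the Ambrosetti–Rabinowitz inequality \emph{through} the change of variables: the nonlinear term $\rn V(x)(G^{-1}(v))^2$ is only comparable to $|v|_2^2$ and the gradient term $\rn|\nabla v|^2$ does not see the potential directly, so one has to be careful that the portion of $V$ that is negative (the reason $0$ may now lie in the spectrum) is absorbed by the $L^2$-compactness rather than needing the quadratic form to be positive — this is why dropping the spectral assumption is legitimate here but requires the full strength of $(f_3)$ with $\mu>2$ (the threshold $\tfrac83$ is inherited from the constraints on $\alpha,\beta$ in \eqref{g1} ensuring $g,G,G^{-1}\in C^2$ and the Moser step, not from the PS argument itself). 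A secondary technical point is making the $L^\infty$-estimate uniform enough along the unbounded sequence; but since for each fixed $n$ the solution $u_n$ is a fixed function in $E$ and $(V)$ forces decay at infinity, the iteration goes through solution-by-solution, so no uniformity in $n$ is actually needed — only that the bound $\sqrt{1/(\alpha k)}$ is respected, which follows from the same scaling argument used for Theorem \ref{th1}.
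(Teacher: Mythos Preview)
Your overall strategy---symmetric Mountain Pass applied to $\mathcal{J}_k$, then pull back through $u=G^{-1}(v)$ and invoke the Moser $L^\infty$-estimate---is exactly what the paper does. Two small corrections: the comparison inequality is reversed---since $\tfrac{\sqrt3}{2}\le g\le 1$ one gets $|t|\le |G^{-1}(t)|\le \tfrac{2}{\sqrt3}|t|$ (the paper's Lemma~2.4(iii)), not the other way round; and the bound $|F(s)|\le\varepsilon s^2+C_\varepsilon|s|^p$ cannot come from $(f_2)$ alone, which says nothing near $0$. You must use $(f_3)$: integrating $\mu F\le sf$ gives $F(s)\le C|s|^\mu$ for $|s|$ small, and $\mu>2$ then yields $F(s)=o(s^2)$. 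The paper invokes both $(f_2)$ and $(f_3)$ for this estimate.

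For the geometry near the origin the paper proceeds differently from your sketch. Instead of bounding the nonlinear principal part $\tfrac12\int|\nabla v|^2+\tfrac12\int V(G^{-1}(v))^2$ by crude comparison with $|v|$, it Taylor-expands $Q(v)$ at $0$ (as in Lemma~3.1) to get $\mathcal{J}_k(v)=\mathcal{B}(v)+o(\|v\|^2)$, and then works on a tail subspace $Z_\nu=\overline{\mathrm{span}}\{\varphi_\nu,\varphi_{\nu+1},\dots\}$ using the concentration estimate $\beta_\nu:=\sup\{|v|_q:v\in Z_\nu,\ \|v\|=1\}\to 0$. Your plan to use $E^+$ directly via the comparison $|G^{-1}(v)|\approx|v|$ runs into trouble because the indefinite part of $V$ must be isolated at the \emph{quadratic} level; without the Taylor step you would need $Z_\nu$ with $\nu$ large enough that $|v|_2^2$ is dominated by $\|v\|^2$, and then the argument goes through. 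Regarding your flagged $L^\infty$ obstacle: the paper offers no argument beyond citing the Moser lemma, so your proposal matches the paper here; but your resolution (``no uniformity in $n$ is needed'') is not correct as written, since the scaling in the proof of Theorem~1.1 chooses $\hat k$ depending on the Moser constant, and the sequence $(v_n)$ has unbounded energy.
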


\begin{rem}
  Unlike Theorem \ref{th21}-$(i)$, we do not require any condition on $f$ near zero, yet we still obtain infinitely many nontrivial solutions.
\end{rem}

The paper is organized as follows. In Section 2, we provide some useful lemmas and present results in Morse theory; further, we give the proof of the Cerami condition. In section 3, using the properties of elliptic regularity and Moser iteration, we give an $L^\infty$-estimate of solutions of Eq. \eqref{eq3} and complete the proof of Theorems \ref{th1} and \ref{th21}. Finally, we will prove Theorem \ref{th2} by the symmetric mountain pass theorem.\par

\section{Preliminary Lemmas}

Throughout this paper, we will use the following notations:
\begin{itemize}
\item $H^1(\R)$ denotes the Sobolev space with norm $\|u\|_{H^1}=\left(\rn\left(|\nabla u|^2+u^2\right)\right)^{1/2}$.
\item $L^p(\mathbb{R}^N)$, $p\in[1,\infty)$, denotes a Lebesgue space, the norm $|u|_p$ by $\left(\int_{\mathbb{R}^N}|u|^p\right)^{1/p}$.
\item $S$ is the best Sobolev constant for the embedding of $D^{1,2}(\mathbb{R}^N)$ in $L^{2^{*}}(\mathbb{R}^N)$ and
\[S=\inf_{u\in D^{1,2}(\mathbb{R}^N)\backslash\{0\}}\frac{|\nabla u|_2^2}{|u|_{2^*}^2}\text{.}\]
\item $|u|_\infty:=\inf\{C>0: |u(x)|\leq C\text{~~a.e. in}~~\R\}$.
\item $\langle\cdot,\cdot\rangle$ denotes dual action between $H^1$ and $(H^1)'$.
\item $B_{r}(y):=\{x\in \mathbb{R}^{N} : |x-y|\leq r \}$ and $B_r :=\{x\in \mathbb{R}^{N} : |x|\leq r \}$.
\item $C$ denotes a positive constant that may be different in different places.
\end{itemize}

Here, we will recall some concepts on Morse theory, see, e.g., Chang \cite{MR1196690} and Mawhin-Willem \cite[Chapter 8]{MR982267}. Let $E$ be a Banach space, $\Phi: E\rightarrow\RR$ be a $C^1$-functional, $u$ is an isolated critical point and $\Phi(u)=c$. Then the $i$-th critical group of $\Phi$ at $u$ defined by
\[C_i(\Phi, u):=H_i(\Phi_c,\Phi_c\backslash\{0\})\text{,}  \quad i\in\mathbb{N}=\{0,1,2,\cdots\}\text{,} \]
where $\Phi_c:=\Phi^{-1}(-\infty, c]$ and $H_*$ stands for the singular homology with coefficients in $\mathbb{Z}$.

If $\Phi$ satisfies the Cerami condition and the critical values of $\Phi$ are bounded from below by $\alpha$, then following Bartsch-Li \cite{MR1420790}, we define the $i$-th critical group of $\Phi$ at infinity by
\[C_i(\Phi,\infty):=H_i(E, \Phi_\alpha)\text{,} \quad i\in\mathbb{N}\text{.} \]

\begin{prop}[{\cite[Proposition 3.6]{MR1420790}}]
  \label{pro2}
  If $\Phi\in C^1(E,\RR)$ satisfies the Cerami condition and $C_k(\Phi,0)\neq C_k(\Phi,\infty)$ for some $k\in\mathbb{N}$, then $\Phi$ has a nonzero critical point.
\end{prop}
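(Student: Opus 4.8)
The plan is to argue by contradiction using the deformation theory that the Cerami condition makes available. Suppose $\Phi$ has no nonzero critical point; I will show this forces $C_i(\Phi,0)=C_i(\Phi,\infty)$ for every $i$, contradicting the hypothesis at $i=k$. Since the symbol $C_k(\Phi,0)$ presupposes that $0$ is an isolated critical point, I would first dispose of the degenerate possibility that $0$ is not a critical point at all. In that case $\Phi$ has no critical points whatsoever, so the negative pseudo-gradient flow deformation-retracts all of $E$ onto any sublevel $\Phi_a$; this gives $C_i(\Phi,\infty)=H_i(E,\Phi_\alpha)=0$, while the local homology at the regular point $0$ also vanishes, so $C_i(\Phi,0)=0$ for all $i$, contradicting $C_k(\Phi,0)\neq C_k(\Phi,\infty)$. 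Hence I may assume $0$ is the \emph{unique} critical point of $\Phi$, with value $c:=\Phi(0)$.

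The heart of the argument is the identification of the critical groups at infinity with those at $0$. I would fix the sublevel parameter $\alpha$ below $c$ (permissible, since $\alpha$ bounds the critical values from below and $c$ is the only critical value, so every level strictly above $c$ is regular). Using the Cerami condition to invoke the deformation lemma on the region where no critical value occurs, I would flow $E$ downward: because $[b,\infty)$ contains no critical value for any $b>c$, the space $E$ deformation-retracts onto $\Phi_b$. This yields
\[
C_i(\Phi,\infty)=H_i(E,\Phi_\alpha)\cong H_i(\Phi_b,\Phi_\alpha)\qquad\text{for every }b>c.
\]

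Next I would compute the right-hand side as a local critical group. Choosing $a,b$ with $\alpha\le a<c<b$ and no critical value in $[a,b]$ other than $c$, the standard single-critical-value computation (again via the Cerami deformation lemma together with excision onto a neighborhood of $0$) gives $H_i(\Phi_b,\Phi_a)\cong C_i(\Phi,0)$. Since $\Phi_\alpha$ and $\Phi_a$ both lie below the unique critical value $c$, the flow shows they are homotopy equivalent, so $H_i(\Phi_b,\Phi_\alpha)\cong H_i(\Phi_b,\Phi_a)\cong C_i(\Phi,0)$. Combining this with the previous display gives $C_i(\Phi,\infty)\cong C_i(\Phi,0)$ for all $i$, which is the desired contradiction; therefore $\Phi$ must possess a nonzero critical point.

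The step I expect to be the main obstacle is the deformation retraction of the entire space $E$ onto a sublevel $\Phi_b$, i.e. the ``retraction from infinity.'' The Cerami condition is exactly what is needed here, since it controls $\Phi$ along unbounded sequences where the gradient may decay; the care lies in constructing a locally Lipschitz pseudo-gradient field whose induced flow is complete and makes $\Phi$ decrease uniformly on $\{\Phi>b\}$, so that every trajectory reaches level $b$ in finite time without escaping to infinity at finite energy. Once this deformation and the complementary flow below $c$ are established, the remaining identifications are routine applications of homotopy invariance, excision, and the single-critical-value computation.
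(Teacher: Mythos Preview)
The paper does not prove this proposition; it is quoted directly from Bartsch--Li \cite{MR1420790} and used as a black box. Your argument is the standard one behind that result: assuming $0$ is the only critical point, use the Cerami deformation lemma to retract $E$ onto a sublevel $\Phi_b$ with $b>c=\Phi(0)$, flow the lower sublevels $\Phi_\alpha$ and $\Phi_a$ into each other below $c$, and then invoke the single-critical-value identification $H_i(\Phi_b,\Phi_a)\cong C_i(\Phi,0)$. This is correct and matches the approach in the cited reference; the technical point you flag---that the retraction from infinity requires Cerami rather than just Palais--Smale to control trajectories along which $\|u\|\to\infty$ but $(1+\|u\|)\|\Phi'(u)\|$ stays bounded away from zero---is precisely why the Cerami condition appears in the hypothesis.
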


\begin{prop}[{\cite[Theroem 2.1]{MR1110119}}] \label{pro3}
Suppose $\Phi\in C^1(E,\RR)$ has a local linking at $0$ with respect to the decomposition $E=Y\oplus Z$, i.e., for some $\rho>0$,
\begin{align*}
  &\Phi(u)\leq 0~~\text{\quad for}~~u\in Y\cap B_\rho\text{,} \\
  &\Phi(u)>0~~\text{\quad for}~~u\in (Z\backslash\{0\})\cap B_\rho
\end{align*}
where $B_\rho=\{u\in E :\|u\|\leq \rho\}$. If $l=\dim Y<\infty$, then $C_l(\Phi, 0)\neq 0$.
\end{prop}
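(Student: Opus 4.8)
The plan is to reduce this abstract statement to a local homology computation and then extract a nonzero $l$-dimensional class from the ``unstable cell'' furnished by the local linking. We may assume $\Phi(0)=0$: the point $0\in Y\cap B_\rho$ forces $\Phi(0)\le 0$ by the first inequality, while $0$ is approached by points of $Z\setminus\{0\}$ on which $\Phi>0$, so continuity gives $\Phi(0)=0$ and shows $0$ is a critical point. Shrinking $\rho$ if necessary, we take $0$ to be the only critical point of $\Phi$ in $\overline{B_\rho}$. Since critical groups are a local invariant, excision lets us compute
\[
C_l(\Phi,0)=H_l\bigl(\Phi_0\cap B_\rho,\,(\Phi_0\cap B_\rho)\setminus\{0\}\bigr),\qquad \Phi_0:=\Phi^{-1}(-\infty,0].
\]

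By the first local-linking inequality the set $A:=Y\cap B_\rho$ is a closed topological $l$-ball contained in $\Phi_0\cap B_\rho$, and since $0\notin Y\cap\partial B_\rho$ its puncture $A\setminus\{0\}$ lies in $(\Phi_0\cap B_\rho)\setminus\{0\}$. Using that $A$ is contractible while $A\setminus\{0\}$ is homotopy equivalent to $S^{l-1}$, the long exact sequence of the pair gives $H_l(A,A\setminus\{0\})\cong\widetilde{H}_{l-1}(S^{l-1})\cong\mathbb{Z}$ for $l\ge 1$ (and $C_0(\Phi,0)\ne 0$ directly when $l=0$, since then $0$ is a strict local minimizer). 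Let $[A]$ denote a generator. It now suffices to show that the inclusion-induced map $\iota_*\colon H_l(A,A\setminus\{0\})\to C_l(\Phi,0)$ sends $[A]$ to a nonzero class; in fact it is enough to produce a retraction of pairs $r$ from $(\Phi_0\cap B_\rho,(\Phi_0\cap B_\rho)\setminus\{0\})$ onto $(A,A\setminus\{0\})$, for then $r_*\iota_*=\mathrm{id}$ forces $\iota_*$ to be injective and hence $C_l(\Phi,0)\supseteq\mathbb{Z}\ne 0$.

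Constructing this retraction is the heart of the matter and the step I expect to be the main obstacle. Since $\Phi$ is only $C^1$, I would fix a locally Lipschitz pseudo-gradient vector field $W$ for $\Phi$ on $B_\rho\setminus\{0\}$, cut it off near $\partial B_\rho$ so that the boundary sphere is flow-invariant, and deform along its negative flow. The second local-linking inequality, $\Phi>0$ on $(Z\setminus\{0\})\cap B_\rho$, is precisely what makes the $Z$-directions transverse to $\Phi_0$: combined with the absence of critical points in $\overline{B_\rho}\setminus\{0\}$ and the deformation lemma, it lets the flow push any $u\in\Phi_0\cap B_\rho$ toward $Y$ while keeping $\Phi\le 0$, and it guarantees that $\Phi_0\cap B_\rho$ contains no nonzero point of $Z$, so that no nonzero point is carried to the origin and the deformation descends to the punctured sets. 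The two delicate points are (i) confining trajectories to $\overline{B_\rho}$, which is handled by the boundary cut-off, and (ii) verifying that the terminal map is a genuine retraction onto $A$ rather than merely a homotopy equivalence; both are standard but technical consequences of the local-linking geometry. Granting the retraction, $\iota_*$ is injective, $[A]$ survives, and $C_l(\Phi,0)\cong\mathbb{Z}\ne 0$, completing the argument.
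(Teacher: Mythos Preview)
The paper does not prove this proposition; it is quoted from \cite{MR1110119} and used as a black box, so there is no proof in the paper to compare against. Your overall strategy---show that the inclusion $(A,A\setminus\{0\})\hookrightarrow(\Phi_0\cap B_\rho,(\Phi_0\cap B_\rho)\setminus\{0\})$ with $A=Y\cap B_\rho$ is injective on $H_l$---is exactly the right one and is how the result is proved in the literature. The gap is in your construction of the retraction. The negative pseudo-gradient flow decreases $\Phi$ and therefore preserves $\Phi_0$, but nothing forces it to flow \emph{toward} $Y$: the direction of the flow is dictated by $\Phi'$, not by the splitting $E=Y\oplus Z$, and the local-linking hypotheses impose no relation between the two. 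Your assertion that the $Z$-directions are ``transverse to $\Phi_0$'' in a way that makes the flow terminate on $A$ is not justified; in general the flow will simply wander inside $\Phi_0$ and there is no well-defined terminal map, let alone one landing on $Y$.

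The classical fix bypasses gradient flow entirely. From $\Phi>0$ on $(Z\setminus\{0\})\cap B_\rho$ one sees that $(\Phi_0\cap B_\rho)\setminus\{0\}\subset B_\rho\setminus Z$, yielding a second inclusion of pairs
\[
(\Phi_0\cap B_\rho,(\Phi_0\cap B_\rho)\setminus\{0\})\hookrightarrow(B_\rho,B_\rho\setminus Z).
\]
Choosing an equivalent norm with $\|y+z\|^2=\|y\|^2+\|z\|^2$, the \emph{linear} homotopy $h_t(y+z)=y+(1-t)z$ retracts $(B_\rho,B_\rho\setminus Z)$ onto $(A,A\setminus\{0\})$, so $H_l(B_\rho,B_\rho\setminus Z)\cong\mathbb{Z}$. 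The composite $(A,A\setminus\{0\})\to(\Phi_0\cap B_\rho,\cdot)\to(B_\rho,B_\rho\setminus Z)$ is precisely this homotopy equivalence and hence an isomorphism on $H_l$; therefore the first inclusion is injective on $H_l$ and $C_l(\Phi,0)\ne0$. This is the argument in Liu \cite{MR1110119} and in Chang \cite{MR1196690}.
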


The lemma below will play an important role in the proof of our main results involving the function $g$ and $G^{-1}$, which will be used later on.
\begin{lem}
  \label{lem11} The function $G^{-1}(t)$ satisfies the following properties:
  \begin{itemize}
    \item[$(i)$] $\lim \limits_{t \rightarrow 0} \frac{G^{-1}(t)}{t}=1$;
    \item[$(ii)$] $\lim \limits_{t \rightarrow\infty} \frac{G^{-1}(t)}{t}=\frac{2}{\sqrt{3}}$;
    \item[$(iii)$] $1 \leq \frac{G^{-1}(t)}{t} \leq \frac{2}{\sqrt{3}}$\quad for all $t\in\mathbb{R}$;
    \item[$(iv)$] $-1\leq \frac{t}{g(t)} g^{\prime}(t) \leq 0$\quad for all $t\in\mathbb{R}$;
    \item[$(v)$] $\frac{t}{G^{-1}(t)} \geq g\left(G^{-1}(t)\right)$\quad for all $t\in\mathbb{R}$.
  \end{itemize}
\end{lem}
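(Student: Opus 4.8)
The plan is to prove each item directly from the explicit formula \eqref{g1} for $g$, using that $G(t)=\int_0^t g(s)\,ds$ and $G^{-1}$ is its inverse. First I would record the basic facts that are immediate from \eqref{g1}: $g$ is even, $g\in C^1(\RR,(\tfrac{\sqrt3}{2},1])$, $g$ is decreasing on $[0,\infty)$ with $g(0)=1$ and $\lim_{s\to\infty}g(s)=\tfrac{\sqrt3}{2}$. For item $(i)$, since $G(0)=0$, $G'(0)=g(0)=1$, and $G$ is $C^2$, L'H\^opital (or the definition of the derivative of $G^{-1}$ at $0$) gives $\lim_{t\to0}G^{-1}(t)/t = 1/G'(0)=1$. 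For item $(ii)$, I would use that for large $s$, $g(s)=\tfrac{1}{\beta k s^2}+\tfrac{\sqrt3}{2}$, so $G(t)=\tfrac{\sqrt3}{2}t + c + o(1)$ as $t\to\infty$ for some constant $c$ (the integral of $1/(\beta k s^2)$ converges); hence $G(t)/t\to\tfrac{\sqrt3}{2}$, and since $G^{-1}(t)\to\infty$ as $t\to\infty$, substituting $t\mapsto G^{-1}(t)$ yields $t/G^{-1}(t)\to\tfrac{\sqrt3}{2}$, i.e. $G^{-1}(t)/t\to\tfrac{2}{\sqrt3}$.

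For item $(iv)$, I would split into the two branches. On $0\le s<\sqrt{1/(\alpha k)}$ one has $g(s)=\sqrt{1-ks^2}$, so $g'(s)=-ks/\sqrt{1-ks^2}$ and $\tfrac{s}{g(s)}g'(s) = -ks^2/(1-ks^2)$; since $0\le ks^2 < 1/\alpha$, this quantity lies in $(-\tfrac{1}{\alpha-1},0]$, and because $\alpha = \tfrac{9+3\sqrt5}{2}>2$ we get $-1<\tfrac{1}{\alpha-1}<1$, so the bound $-1\le \tfrac{s}{g(s)}g'(s)\le 0$ holds. On $s\ge\sqrt{1/(\alpha k)}$, $g(s)=\tfrac{1}{\beta k s^2}+\tfrac{\sqrt3}{2}$ gives $g'(s) = -\tfrac{2}{\beta k s^3}\le 0$, so $\tfrac{s}{g(s)}g'(s) = \tfrac{-2/(\beta k s^2)}{1/(\beta k s^2)+\sqrt3/2}$, which is negative and, since the numerator's magnitude is at most twice a term already bounded by the denominator (indeed $-2x/(x+\tfrac{\sqrt3}{2})>-1$ for all $x>0$ once one checks $x<\tfrac{\sqrt3}{2}$, which follows from $\tfrac{1}{\beta k s^2}\le \tfrac{\alpha}{\beta}\le\tfrac{\sqrt3}{2}$ by the choice $\beta=2\alpha^{3/2}\sqrt{\alpha-1}$), lies in $(-1,0]$; matching at $s=\sqrt{1/(\alpha k)}$ must be checked for consistency of the bound but not for $C^1$-ness beyond what is already asserted. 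Item $(v)$ is equivalent to $t\ge g(G^{-1}(t))\,G^{-1}(t)$; writing $w=G^{-1}(t)$ (so $t=G(w)$, and for $w\ge0$) this reads $G(w)\ge w\,g(w)$, which follows because $g$ is decreasing on $[0,\infty)$, hence $G(w)=\int_0^w g(s)\,ds \ge \int_0^w g(w)\,ds = w\,g(w)$; the case $w<0$ follows by oddness of $G$ and $G^{-1}$ together with evenness of $g$.

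Finally, item $(iii)$: the function $h(t):=G^{-1}(t)/t$ is even, has limits $1$ at $0$ and $\tfrac{2}{\sqrt3}$ at $\infty$ by $(i)$–$(ii)$, so it suffices to show $h$ is monotone (increasing) on $(0,\infty)$, equivalently that $t\mapsto t/G^{-1}(t)$ is decreasing, equivalently (with $w=G^{-1}(t)$, $t=G(w)$) that $w\mapsto G(w)/w$ is decreasing on $(0,\infty)$. Since $\big(G(w)/w\big)' = \big(wg(w)-G(w)\big)/w^2$ and $wg(w)-G(w)\le 0$ by $(v)$ (the computation just done), monotonicity follows, and therefore $h$ ranges in $[1,\tfrac{2}{\sqrt3}]$. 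I expect the main obstacle to be item $(iv)$ on the outer branch: verifying that the constants $\alpha=\tfrac{9+3\sqrt5}{2}$ and $\beta=2\alpha^{3/2}\sqrt{\alpha-1}$ were chosen precisely so that $\tfrac{s}{g(s)}g'(s)\ge -1$ holds uniformly and so that the two branches glue in a $C^1$ manner; this is a careful but elementary estimate, and it is the place where the specific numerical values genuinely matter, whereas all other items only use the qualitative monotonicity and endpoint behavior of $g$.
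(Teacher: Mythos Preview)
Your proposal is correct and follows essentially the same route as the paper: items $(i)$–$(ii)$ via the derivative $(G^{-1})'(t)=1/g(G^{-1}(t))$ (the paper uses L'H\^opital in exactly this form, whereas you phrase $(ii)$ via the asymptotic of $G$, which is equivalent), item $(v)$ from the monotonicity inequality $G(w)\ge w\,g(w)$, item $(iii)$ by combining $(i)$, $(ii)$ with the sign of $\big(G^{-1}(t)/t\big)'$ (the paper differentiates $G^{-1}(t)/t$ directly, you equivalently differentiate $G(w)/w$), and item $(iv)$ by the same branch-by-branch computation. The numerical check you flag on the outer branch is exactly the one the paper performs, rewriting $\tfrac{s}{g(s)}g'(s)=-2/(1+\tfrac{\sqrt3}{2}\beta k s^2)$ and bounding it below by $-2/(1+\tfrac{\sqrt3\beta}{2\alpha})\ge -1$.
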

\begin{proof}
  It follows from the \eqref{g1} and the Hospital's principle that
  \begin{align*}
   & \lim_{t\rightarrow0}\frac{G^{-1}(t)}{t}=\lim_{t\rightarrow0}
    \frac{1}{g(G^{-1}(t))}=1\text{,} \\
   & \lim_{t\rightarrow\infty}\frac{G^{-1}(t)}{t}=\lim_{t\rightarrow\infty}
    \frac{1}{g(G^{-1}(t))}=\frac{2}{\sqrt{3}}\text{.}
  \end{align*}
  Then $(i)$ and $(ii)$ hold. Since $g(t)$ is decreasing in $|t|$, we obtain
  \begin{align}\label{dd}G(t)\geq tg(t)\geq0\text{,} ~~t\geq0\quad\text{and}\quad G(t)<tg(t)<0\text{,} ~~t<0\text{,} \end{align}
  which implies $(v)$. By \eqref{dd}, we deduce that
  \begin{align*}
    \frac{d}{dt}\left(\frac{G^{-1}(t)}{t}\right)=\frac{t-G^{-1}(t)g(G^{-1}(t))}{g(G^{-1}(t))t^2}
    \left\{
    \begin{array}{lll}
      \geq 0\text{,} &\quad t\geq0\text{,} \\
      <0\text{,} &\quad t<0\text{.}
    \end{array}
    \right.
  \end{align*}
This, in conjunction with $(i)$ and $(ii)$, gives $(iii)$, i.e.,
\[1\leq\frac{G^{-1}(t)}{t}\leq \frac{2}{\sqrt{3}}\text{.} \]
 For $t\geq 0$, by direct calculation we obtain
\begin{align*}
 \frac{t}{g(t)}g'(t)= \left\{
  \begin{array}{lll}
    -\frac{kt^2}{1-kt^2}\text{,} &\quad 0\leq t<\sqrt{\frac{1}{\alpha k}}\text{,}\\
    \frac{-2}{1+\frac{\sqrt{3}}{2}\beta k t^2}\text{,} &\quad t\geq\sqrt{\frac{1}{\alpha k}}\text{,}
  \end{array}
  \right.
\end{align*}
which implies that
\begin{align*}
 \frac{t}{g(t)}g'(t)\geq\left\{
  \begin{array}{lll}
   -1\text{,} &\quad 0\leq t<\sqrt{\frac{1}{\alpha k}}\text{,} \\
 \frac{-2}{1+\frac{\sqrt{3}\beta}{2\alpha}}\geq -1\text{,} &\quad t\geq\sqrt{\frac{1}{\alpha k}}\text{.}
  \end{array}
  \right.
\end{align*}
For $t<0$, we show as in the case $t>0$ that $\frac{t}{g(t)}g'(t)\geq -1$. The second inequality in $(iv)$ is clear. Therefore, $(iv)$ is desired.
\end{proof}

We commence by establishing the Cerami condition for the energy functional $\mathcal{J}_k$. Firstly, we set
\[\widetilde{f}(t)=f(t)+mt\text{,}\quad \widetilde{F}(t)=\int_0^t \widetilde{f}(s)ds=F(t)+\frac{m}{2}t^2\]
and rewrite $\mathcal{J}_k$ as follow:
\begin{align*}
 \mathcal{J}_k(v)=\frac{1}{2}\rn \left(|\nabla v|^2+\widetilde{V}(x)(G^{-1}(v))^2\right)-\rn \widetilde{F}(G^{-1}(v))\text{,}
\end{align*}
where $\widetilde{V}$ is given in \eqref{v2}. Using $(f_3)$ we deduce that
\begin{align}\label{f3}
\mu \widetilde{F}(t)-\widetilde{f}(t)t\leq\left(\frac{\mu}{2}-1\right)mt^2\text{,\quad for~~} t\neq0\text{.}
\end{align}
\begin{lem}
  \label{l1}
  For $k>0$ fixed and under the assumptions $(V)$, $(f_2)$ and $(f_3)$ hold, the functional $\mathcal{J}_k$ satisfies the Cerami condition.
\end{lem}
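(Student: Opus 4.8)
\emph{Strategy.} I would check the two defining features of a Cerami sequence separately: first that every such sequence is bounded in $E$, then that a bounded Cerami sequence has a strongly convergent subsequence. Let $(v_n)\subset E$ satisfy $\mathcal{J}_k(v_n)\to c$ and $(1+\|v_n\|)\|\mathcal{J}_k'(v_n)\|\to0$, and set $u_n=G^{-1}(v_n)$, so that $\nabla v_n=g(u_n)\nabla u_n$ and $|v_n|\le|u_n|\le\frac{2}{\sqrt3}|v_n|$ by Lemma \ref{lem11}$(iii)$. To obtain a first a priori bound I would test $\mathcal{J}_k'(v_n)$ against $\phi_n=g(u_n)u_n$; since $(G^{-1})'(v)=1/g(G^{-1}(v))$, the potential and reaction terms collapse to $\rn\widetilde{V}(x)u_n^2$ and $\rn\widetilde{f}(u_n)u_n$, while the principal part contributes $\rn g^2(u_n)\big(1+\frac{u_ng'(u_n)}{g(u_n)}\big)|\nabla u_n|^2$. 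Forming $\mu\mathcal{J}_k(v_n)-\langle\mathcal{J}_k'(v_n),\phi_n\rangle$ and using Lemma \ref{lem11}$(iv)$ — which bounds the gradient coefficient below by $(\frac{\mu}{2}-1)g^2(u_n)$, and $g^2(u_n)|\nabla u_n|^2=|\nabla v_n|^2$ — together with the inequality \eqref{f3}, one reaches
\[
\Big(\frac{\mu}{2}-1\Big)\Big(\rn|\nabla v_n|^2+\rn V(x)u_n^2\Big)\ \le\ \mu\mathcal{J}_k(v_n)-\langle\mathcal{J}_k'(v_n),\phi_n\rangle .
\]
Because Lemma \ref{lem11}$(iii)$–$(iv)$ also give $\|\phi_n\|\le C\|v_n\|$, the right-hand side is at most $\mu c+o(1)+C\|v_n\|\,\|\mathcal{J}_k'(v_n)\|$, which stays bounded; hence $\rn|\nabla v_n|^2+\rn V(x)u_n^2\le C_1$ for all $n$.

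\emph{Boundedness.} This estimate does not yet bound $\|v_n\|$, because \eqref{f3} produces only $\rn V(x)u_n^2$ and $V$ is indefinite; writing $\rn V(x)u_n^2=\rn\widetilde{V}(x)u_n^2-m\rn u_n^2$, I would argue by contradiction. Suppose $\|v_n\|\to\infty$ and set $w_n=v_n/\|v_n\|$, so $\|w_n\|=1$; up to a subsequence $w_n\rightharpoonup w$ in $E$, $w_n\to w$ in $L^q(\R)$ for $q\in[2,2^*)$ by the compact embedding, and $w_n\to w$ a.e. Using Lemma \ref{lem11}$(ii)$–$(iii)$ one checks that $u_n/\|v_n\|\to\frac{2}{\sqrt3}w$ a.e. and, being dominated by $\frac{2}{\sqrt3}|w_n|$, also in $L^2(\R)$, so $\|v_n\|^{-2}\rn u_n^2\to\frac{4}{3}\rn w^2$. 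If $w=0$, dividing $\rn|\nabla v_n|^2+\rn\widetilde{V}(x)u_n^2-m\rn u_n^2\le C_1$ by $\|v_n\|^2$ and using $\rn\widetilde{V}(x)v_n^2\le\rn\widetilde{V}(x)u_n^2$ forces $\|w_n\|^2\to0$, a contradiction. If $w\neq0$, then $|v_n|\to\infty$ a.e. on the set $\{w\neq0\}$ of positive measure; since \eqref{f3} (equivalently $(f_3)$ with $\mu>\frac{8}{3}>2$) yields $F(s)\ge c|s|^\mu$ for $|s|$ large, Fatou's lemma gives $\|v_n\|^{-2}\rn\widetilde{F}(u_n)\to\infty$, whereas $\mathcal{J}_k(v_n)=\frac12\rn(|\nabla v_n|^2+\widetilde{V}(x)u_n^2)-\rn\widetilde{F}(u_n)$ together with $\rn|\nabla v_n|^2\le\|v_n\|^2$ and $\rn\widetilde{V}(x)u_n^2\le\frac43\|v_n\|^2$ keeps $\|v_n\|^{-2}\rn\widetilde{F}(u_n)$ bounded — again a contradiction. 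Hence $(v_n)$ is bounded in $E$.

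\emph{Strong convergence.} Now, along a subsequence, $v_n\rightharpoonup v$ in $E$, $v_n\to v$ in $L^q(\R)$ for $q\in[2,2^*)$, and $v_n\to v$ a.e. I would compute $\langle\mathcal{J}_k'(v_n)-\mathcal{J}_k'(v),v_n-v\rangle$, which tends to $0$ (the $\mathcal{J}_k'(v_n)$ part by the Cerami condition and boundedness, the $\mathcal{J}_k'(v)$ part since $v_n-v\rightharpoonup0$). Its gradient part is exactly $\rn|\nabla(v_n-v)|^2$, and since the map $t\mapsto G^{-1}(t)/g(G^{-1}(t))$ has derivative $\big(1-\frac{u g'(u)}{g(u)}\big)/g^2(u)\in[1,\frac83)$ by Lemma \ref{lem11}$(iv)$, it is increasing with slope $\ge1$, so its potential part is $\ge\rn\widetilde{V}(x)(v_n-v)^2$. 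Consequently
\[
\|v_n-v\|^2\ \le\ \langle\mathcal{J}_k'(v_n)-\mathcal{J}_k'(v),v_n-v\rangle+\rn\Big(\frac{f(u_n)}{g(u_n)}-\frac{f(u)}{g(u)}\Big)(v_n-v).
\]
It remains to verify that the last integral tends to $0$, which I would do using the subcritical growth $(f_2)$ and the behaviour of $f$ near the origin — giving $|f(u_n)/g(u_n)|\le C(|v_n|+|v_n|^{p-1})$ — together with $v_n\to v$ in $L^2(\R)\cap L^p(\R)$, Hölder's inequality, and dominated convergence for the fixed-function term $\rn\frac{f(u)}{g(u)}(v_n-v)$. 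This yields $\|v_n-v\|\to0$, i.e., the Cerami condition holds.

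\emph{Main obstacle.} The delicate point is the boundedness step. Unlike the definite case, the Ambrosetti--Rabinowitz inequality \eqref{f3} only controls $\rn V(x)u_n^2$, which is not sign-definite because $V$ changes sign, so boundedness cannot be read off directly from the energy and the differential. One must pass to the normalized sequence $v_n/\|v_n\|$, carry the a.e.\ limit of $G^{-1}(v_n)/\|v_n\|$ through the change of variables, and balance the superquadratic growth of $F$ on $\{w\neq0\}$ against the bounded energy, while exploiting the compact embedding $E\hookrightarrow L^2(\R)$ — itself a consequence of $(V)$ — to absorb the unfavourable term $-m\rn u_n^2$. The remaining ingredients (the test-function identities and the $L^q$-continuity of the reaction term) are routine.
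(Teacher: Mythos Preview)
Your proposal is correct and follows essentially the paper's proof: the same test function $\phi_n=g(u_n)u_n$, the same combination $\mu\mathcal{J}_k(v_n)-\langle\mathcal{J}_k'(v_n),\phi_n\rangle$ exploited via Lemma \ref{lem11}$(iv)$ and \eqref{f3}, the same Fatou argument on $\{w\neq0\}$ for the contradiction, and the same monotonicity of $t\mapsto G^{-1}(t)/g(G^{-1}(t))$ for the strong-convergence step. The only cosmetic difference is that the paper normalizes by $\zeta_n=\big(\int|\nabla v_n|^2+\int\widetilde V\,|G^{-1}(v_n)|^2\big)^{1/2}$ and sets $w_n=G^{-1}(v_n)/\zeta_n$, which yields $|w_n|_2^2\ge 1/m$ directly from the divided inequality and so avoids your $w=0$/$w\neq0$ case split.
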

\begin{proof}
  Let $\{v_n\}$ be a Cerami sequence of $\mathcal{J}_k$, that is, as $n\rightarrow\infty$,
  \[\mathcal{J}_k(v_n)\rightarrow c_k\text{,} \quad \left(1+\|v_n\|\right)\mathcal{J}_k'(v_n)\rightarrow 0\text{.}\]
  We claim that there exists $C>0$ such that
  \[\zeta_n:=\left[\rn \left(|\nabla v_n|^2+\widetilde{V}(x)|G^{-1}(v_n)|^2\right)\right]^{\frac{1}{2}}\leq C\text{.} \]
  Otherwise, we may assume $\zeta_n\rightarrow\infty$. Set the sequence
  \[w_n=\frac{G^{-1}(v_n)}{\zeta_n}\text{,}\]
 by $g\in C^1(\RR,(\frac{\sqrt{3}}{2},1])$ we get
  \begin{align}\label{t2}
    \|w_n\|^2&=\rn\left( |\nabla w_n|^2+\widetilde{V}(x)w_n^2\right)\notag\\
    &=\frac{1}{\zeta_n^2}\rn \left(\frac{|\nabla v_n|^2}{|g(G^{-1}(v_n))|^2}+\widetilde{V}(x)|G^{-1}(v_n)|^2\right)\notag\\
    &\leq\frac{4}{3}\text{.}
  \end{align}
  Therefore, $\{w_n\}$ is bounded in $E$. Passing to a subsequence, by the compactness of the embedding $E\hookrightarrow L^p$, we may assume that
  \begin{align}
  \label{1}w_n\rightharpoonup w~~\text{in}~~E\text{,} \quad w_n\rightarrow w~~\text{in} ~~L^p\text{,} \quad w_n(x)\rightarrow w(x) ~~\text{in} ~~\R\text{.}
  \end{align}
  Similar to Colin-Jeanjean \cite{MR2029068}, we define
  \[\psi_n=g(G^{-1}(v_n))G^{-1}(v_n)\text{.} \]
  Together with Lemma \ref{lem11}-$(iv)$ we have
  \begin{align*}
    |\nabla \psi_n|=\left(\frac{g'(G^{-1}(v_n))G^{-1}(v_n)}{g(G^{-1}(v_n))}+1\right)|\nabla v_n|\leq |\nabla v_n|\text{.}
  \end{align*}
  Moreover, by Lemma \ref{lem11}-$(iii)$ and $g\in C^1(\RR,(\frac{\sqrt{3}}{2},1])$, we have
  \[|G^{-1}(v_n)g(G^{-1}(v_n))|\leq\frac{2}{\sqrt{3}}|v_n|\text{.}\]
  So,
  \begin{align*}
    \|\psi_n\|^2&=\rn \left(|\nabla \psi_n|^2+\widetilde{V}(x)\psi_n^2\right)\\
    &\leq\rn |\nabla v_n|^2+\rn \widetilde{V}(x)g^2(G^{-1}(v_n)) |G^{-1}(v_n)|^2\\
    &\leq\frac{4}{3}\|v_n\|^2\text{.}
  \end{align*}
  Thus, using Lemma \ref{lem11}-$(iv)$ and \eqref{f3} we have
  \begin{align*}
    \mu c_k+o(1)&=\mu\mathcal{J}_k(v_n)-\langle \mathcal{J}_k'(v_n), \psi_n\rangle\\
    &=\frac{\mu}{2}\rn \left(|\nabla v_n|^2+\widetilde{V}(x)|G^{-1}(v_n)|^2\right)-\mu\rn \widetilde{F}(G^{-1}(v_n))\\
    &\quad-\!\rn \left(\nabla v_n\cdot \nabla \psi_n+ \widetilde{V}(x)\frac{G^{-1}(v_n)}{g(G^{-1}(v_n))}\psi_n\right)+\rn \frac{\widetilde{f}(G^{-1}(v_n))}{g(G^{-1}(v_n))}\psi_n\\
    &=\rn \left(\frac{\mu}{2}-\frac{g'(G^{-1}(v_n))G^{-1}(v_n)}{g(G^{-1}(v_n))}-1\right)|\nabla v_n|^2\\
    &\quad+\left(\frac{\mu}{2}-1\right)\rn \widetilde{V}(x) |G^{-1}(v_n)|^2\\
    &\quad+\rn\left(\widetilde{ f}(G^{-1}(v_n))G^{-1}(v_n)-\mu\widetilde{F}(G^{-1}(v_n))\right)\\
     &\geq\left(\frac{\mu}{2}-1\right)\zeta_n^2-\left(\frac{\mu}{2}-1\right)\rn m |G^{-1}(v_n)|^2\text{.}
  \end{align*}
Multiplying both sides by $\zeta_n^{-2}$, we obtain
  \[\left(\frac{\mu}{2}-1\right)m|w_n|_2^2=\frac{m}{\zeta_n^2}\left(\frac{\mu}{2}-1\right)\rn |G^{-1}(v_n)|^2\geq \frac{\mu}{2}-1>0\text{.}  \]
  Combining \eqref{1} and $\mu>\frac{8}{3}$, it follows that $w\neq0$. Hence, the set
  \[\Lambda=\{x\in\R\text{,~~}  w\neq 0\}\]
  is of positive Lebesgue measure. For $x\in\Lambda$, we conclude $w_n(x)\rightarrow w(x)\neq0$ and
  \[|G^{-1}(v_n)|=\zeta_n |w_n|\rightarrow\infty\text{.}\]
  Together with $(f_3)$ and the Fatou lemma we obtain
    \begin{align}\label{f1}
 \liminf_{n\rightarrow\infty}\rn \frac{\widetilde{F}(G^{-1}(v_n))}{\zeta_n^2}\geq \int_{\Lambda}\liminf_{n\rightarrow\infty}\frac{\widetilde{F}(G^{-1}(v_n))}{|G^{-1}(v_n)|^2}w_n^2=+\infty\text{.}
  \end{align}
By \eqref{f1}, for large $n$ we have
  \begin{align*}
    c_k-1\leq\mathcal{J}_k(v_n)&=\frac{1}{2}\rn\left( |\nabla v_n|^2+\widetilde{V}(x)|G^{-1}(v_n)|^2\right)-\rn \widetilde{F}(G^{-1}(v_n))\\
    &=\zeta_n^2\left(\frac{1}{2}-\rn \frac{\widetilde{F}(G^{-1}(v_n))}{\zeta_n^2}\right)\\
    &\leq\zeta_n^2\left(\frac{1}{2}-\int_{\Lambda} \frac{\widetilde{F}(G^{-1}(v_n))}{|G^{-1}(v_n)|^2}w_n^2\right)\rightarrow-\infty\text{,}
  \end{align*}
  which is a contradiction. Therefore, $\zeta_n\leq C$. Using Lemma \ref{lem11}-$(iii)$, we can obtain
  \begin{align*}
    \zeta_n^2=\rn \left(|\nabla v_n|^2+\widetilde{V}(x)|G^{-1}(v_n)|^2\right)\geq\|v_n\|^2\text{.}
  \end{align*}
  Then, $\{v_n\}$ is bounded in $E$. Similar to \eqref{1}, along a subsequence we obtain
  \[v_n\rightharpoonup v~~\text{in}~~E\text{,} \quad v_n\rightarrow v~~\text{in}~~L^p~\text{for all ~~}2\leq p<2^*\text{,\quad} v_n\rightarrow v\text{~a.e.~ in}~\mathbb{R}^N\text{.}\]
We claim that there exists a constant $\eta>0$ such that
\begin{align}\label{2}h_n=\left(\frac{G^{-1}(v_n)}{g(G^{-1}(v_n))}-\frac{G^{-1}(v)}{g(G^{-1}(v))}\right)(v_n-v)\geq\eta(v_n-v)^2\text{.} \end{align}
In fact, by $(iv)$ in Lemma \ref{lem11} and $g\in C^1(\RR,(\frac{\sqrt{3}}{2},1])$, we deduce
\[\frac{d}{ds}\left(\frac{G^{-1}(s)}{g(G^{-1}(s))}\right)=
\frac{1-\frac{G^{-1}(s)g'(G^{-1}(s))}{g(G^{-1}(v_n))}}{g^2(G^{-1}(s))}>0\text{.} \]
Hence, $\frac{G^{-1}(s)}{g(G^{-1}(s))}$ is strictly increasing, and for each $C>0$ there exists $\eta>0$ such that
\[\frac{d}{ds}\left(\frac{G^{-1}(s)}{g(G^{-1}(s))}\right)\geq\eta\text{,\quad}  \text{when}~~|s|\leq C\text{.} \]
So, by the Mean Value Theorem, we obtain
\begin{align*}
h_n\geq \eta(v_n-v)^2\text{.}
\end{align*}
So, the claim \eqref{2} holds. By $(f_1)$ and $(f_2)$, we get
\begin{align}
\label{grow}|f(t)|\leq \varepsilon |t|+C_\varepsilon |t|^{p-1}\text{,\quad }p\in (2,2^*)\text{.}
\end{align}
From the properties of $G^{-1}$, $g\in C^1\left(\mathbb{R},\left(\frac{\sqrt{3}}{2},1\right]\right)$,  \eqref{grow} and the H\"{o}lder inequality, it follows that
  \begin{align}\label{3}
 &\quad\rn \left(\frac{\widetilde{f}(G^{-1}(v_n))}{g(G^{-1}(v_n))}-\frac{\widetilde{f}(G^{-1}(v))}{g(G^{-1}(v))}\right) (v_n-v)\notag\\
 &\leq C\rn (|v_n|^{p-1}+|v_n|+|v|^{p-1}+|v|)|v_n-v|\notag\\
    &\leq C(|v_n|_2+|v|_2)|v_n-v|_2+C(|v_n|_{p}^{p-1}+|v|_p^{p-1})|v_n-v|_p\notag\\
    &=o(1)\text{.}
  \end{align}
From \eqref{2} and \eqref{3}, it follows that
  \begin{align*}
    o(1)&=\langle \mathcal{J}_k'(v_n)-\mathcal{J}_k'(v), v_n-v\rangle\\
    &=\rn |\nabla (v_n-v)|^2+\widetilde{V}(x)\left(\frac{G^{-1}(v_n)}{g(G^{-1}(v_n))}
    -\frac{G^{-1}(v)}{g(G^{-1}(v))}\right)(v_n-v)\\
    &\quad-\rn \left(\frac{\widetilde{f}(G^{-1}(v_n))}{g(G^{-1}(v_n))}-\frac{\widetilde{f}
    (G^{-1}(v))}{g(G^{-1}(v))}\right)(v_n-v)\\
    &\geq C\|v_n-v\|^2+o(1)\text{.}
  \end{align*}
  This implies that $v_n\rightarrow v$ in $E$.
\end{proof}

\section{Proof of Theorem \ref{th1}}

In this section, we will investigate the critical groups of $\mathcal{J}_k$ at the origin and infinity. Using Proposition \ref{pro2}, we conclude the nonzero critical points $v_k$ of $\mathcal{J}_k$. Moreover, we prove the $L^\infty$-estimate to find a nontrivial solution of Eq. \eqref{eq1}.\par
Set $\lambda_0=-\infty$, since $0$ is not an eigenvalue of \eqref{eigen}, we may assume that there exists $\nu\geq 0$ such that $0\in(\lambda_\nu, \lambda_{\nu+1})$. For $\nu\geq1$ we set
\[E^-=\text{span}\{\varphi_1\text{,} \cdots\text{,} \varphi_\nu\}\text{,\qquad} E^+=(E^-)^\perp\text{.}\]
Then $E^-$ and $E^+$ are the negative space and positive space of the quadratic form
\[\mathcal{B}(u)=\frac{1}{2}\rn\left( |\nabla u|^2+V(x)u^2\right)\]
respectively with $\dim E^-=\nu$. Moreover, there is a constant $\kappa>0$ such that
\begin{align}\label{s1}\pm \mathcal{B}(u)\geq \kappa\|u\|^2\text{,\quad} u\in E^{\pm}.\end{align}

\begin{lem}
  \label{l2}
  The functional $\mathcal{J}_k$ has a local linking at $0$ to the decomposition $E=E^-\oplus E^+$.
\end{lem}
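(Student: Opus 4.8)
The plan is to verify directly the two inequalities defining a local linking (Proposition \ref{pro3}) with $Y=E^-$ and $Z=E^+$: we must exhibit $\rho>0$ such that $\mathcal{J}_k(v)\le 0$ for $v\in E^-\cap B_\rho$ and $\mathcal{J}_k(v)>0$ for $v\in(E^+\setminus\{0\})\cap B_\rho$. Adding and subtracting $\frac12\rn V(x)v^2$ in the definition of $\mathcal{J}_k$ and recalling the quadratic form $\mathcal{B}$ from \eqref{s1}, I rewrite
\[
\mathcal{J}_k(v)=\mathcal{B}(v)+\frac12\rn V(x)\bigl[(G^{-1}(v))^2-v^2\bigr]-\rn F(G^{-1}(v)).
\]
The ingredients I will combine are: the spectral gap \eqref{s1}, i.e. $\mathcal{B}(v)\ge\kappa\|v\|^2$ on $E^+$ and $\mathcal{B}(v)\le-\kappa\|v\|^2$ on $E^-$; from Lemma \ref{lem11}-$(iii)$ the two-sided bound $0\le(G^{-1}(t))^2-t^2\le\frac13 t^2$ for all $t\in\RR$, and from Lemma \ref{lem11}-$(i)$ the refinement that for every $\varepsilon>0$ there is $r_\varepsilon>0$ with $(G^{-1}(t))^2-t^2\le\varepsilon t^2$ whenever $|t|\le r_\varepsilon$; the pointwise estimate $|F(s)|\le\frac\varepsilon2 s^2+C_\varepsilon|s|^p$ obtained by integrating \eqref{grow}, together with the positivity $F\ge0$, which is immediate from $0<\mu F(s)$ in $(f_3)$; and finally $|V(x)|\le\widetilde V(x)+m$ by \eqref{v2}, whence $\rn|V|v^2\le\rn\widetilde V v^2+m|v|_2^2\le C\|v\|^2$ for every $v\in E$, using $\rn\widetilde V v^2\le\|v\|^2$ and the continuous embeddings $E\hookrightarrow L^2$, $E\hookrightarrow L^p$.

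For $v\in E^+$ I estimate each term from below. Using $|G^{-1}(t)|\le\frac{2}{\sqrt3}|t|$ (Lemma \ref{lem11}-$(iii)$) and the growth of $F$, $\rn F(G^{-1}(v))\le C\varepsilon|v|_2^2+C_\varepsilon|v|_p^p\le C\varepsilon\|v\|^2+C_\varepsilon\|v\|^p$. For the middle term the point is that $(G^{-1}(v))^2-v^2\ge0$ while, by $(V)$, the set $\Omega:=\{x\in\R:V(x)<0\}$ is bounded, so $\overline\Omega$ is compact and $M:=\max_{\overline\Omega}|V|<\infty$; hence only $\Omega$ contributes negatively, and splitting $\Omega$ according to $|v|\le r_\varepsilon$ or $|v|>r_\varepsilon$ and using $v^2\le r_\varepsilon^{\,2-p}|v|^p$ on the latter gives
\[
\frac12\rn V\bigl[(G^{-1}(v))^2-v^2\bigr]\ \ge\ -\frac{M\varepsilon}{2}|v|_2^2-\frac{M}{6\,r_\varepsilon^{\,p-2}}|v|_p^p\ \ge\ -C\varepsilon\|v\|^2-C_\varepsilon\|v\|^p.
\]
With \eqref{s1} this gives $\mathcal{J}_k(v)\ge(\kappa-C\varepsilon)\|v\|^2-C_\varepsilon\|v\|^p$; choosing $\varepsilon$ so small that $C\varepsilon<\kappa/2$, and then $\rho$ so small that $C_\varepsilon\rho^{p-2}<\kappa/2$, yields $\mathcal{J}_k(v)\ge\frac\kappa2\|v\|^2-C_\varepsilon\|v\|^p>0$ for $0<\|v\|\le\rho$.

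For $v\in E^-$ I use finite-dimensionality: $E^-=\mathrm{span}\{\varphi_1,\dots,\varphi_\nu\}$ and the eigenfunctions $\varphi_i$ belong to $L^\infty(\R)$ by elliptic regularity, so there is $C>0$ with $|v|_\infty\le C\|v\|$ on $E^-$. After shrinking $\rho$ so that $C\rho\le r_\varepsilon$, every $v\in E^-\cap B_\rho$ satisfies $|v(x)|\le r_\varepsilon$ a.e., hence $(G^{-1}(v))^2-v^2\le\varepsilon v^2$ pointwise, and therefore $\bigl|\frac12\rn V[(G^{-1}(v))^2-v^2]\bigr|\le\frac\varepsilon2\rn|V|v^2\le C\varepsilon\|v\|^2$; since $F\ge0$ we also have $-\rn F(G^{-1}(v))\le0$, so \eqref{s1} gives $\mathcal{J}_k(v)\le-(\kappa-C\varepsilon)\|v\|^2\le0$ as soon as $C\varepsilon<\kappa$. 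Taking $\rho$ to be the smaller of the two radii obtained above verifies both conditions of Proposition \ref{pro3}, and since $l=\dim E^-=\nu<\infty$, Lemma \ref{l2} follows. (If $\nu=0$ the first condition is vacuous and only the $E^+$ estimate is needed.)

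The only genuinely delicate step, and the place where Lemma \ref{lem11}-$(iii)$ is needed in full, is the control of $\rn V(x)[(G^{-1}(v))^2-v^2]$: because $V$ is sign-indefinite and unbounded above, neither $|V|$ nor the ``error'' $(G^{-1}(v))^2-v^2$ is a priori small in any norm that would make this term negligible. What rescues the argument is that the error has a definite sign (so on $E^+$ only the bounded region $\{V<0\}$ can be harmful) and that it is $o(t^2)$ as $t\to0$ (so on the finite-dimensional $E^-$, where $\|v\|\to0$ forces $|v|_\infty\to0$, it is pointwise dominated by $\varepsilon v^2$). Everything else is the routine small-norm bookkeeping with the embeddings $E\hookrightarrow L^2$, $E\hookrightarrow L^p$ and the growth of $f$.
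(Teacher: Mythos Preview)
Your proof is correct but takes a different route from the paper's. The paper observes that the principal part $Q(v)=\frac12\rn(|\nabla v|^2+V(x)|G^{-1}(v)|^2)$ is a $C^2$ functional on $E$ with $Q(0)=0$, $Q'(0)=0$, and $\langle Q''(0)v,v\rangle=2\mathcal{B}(v)$, and then invokes the second-order Taylor formula in $E$ to obtain $\mathcal{J}_k(v)=\mathcal{B}(v)+o(\|v\|^2)$ as $\|v\|\to0$; the local linking follows from \eqref{s1} by a single argument valid simultaneously on $E^+$ and $E^-$. Your argument is more elementary in that it bypasses the $C^2$-regularity of $Q$ and the Taylor remainder estimate (whose uniformity in $\|v\|$ the paper does not fully justify), replacing them by direct pointwise control of the error $(G^{-1}(v))^2-v^2$. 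The cost is that you must treat $E^+$ and $E^-$ by two genuinely different mechanisms: on $E^+$ you exploit the sign $(G^{-1}(v))^2\ge v^2$ together with the boundedness of $\{V<0\}$ coming from $(V)$, while on $E^-$ you rely on $L^\infty$ control furnished by finite-dimensionality and regularity of the eigenfunctions. Both approaches establish the lemma; the paper's is more unified and conceptual, yours is more self-contained and makes the role of the hypotheses on $V$ (sign-indefinite but with $\{V<0\}$ bounded) explicit.
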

\begin{proof}
  Define the functional
  \[Q(v)=\frac{1}{2}\rn \left(|\nabla v|^2+V(x)|G^{-1}(v)|^2\right)\text{.}\]
 The principle part $Q$ of $\mathcal{J}_k$ is a $C^2$-functional on $E$, with derivatives given by
 \begin{align*}
   &\langle Q'(v), \phi\rangle=\rn \left(\nabla v\cdot\nabla \phi+V(x)\frac{G^{-1}(v)}{g(G^{-1}(v))}\phi\right)\text{,}\\
   &\langle Q''(v)\phi, \psi\rangle=\rn \left(\nabla \phi\cdot\nabla\psi+V(x)\frac{1-\frac{g'(G^{-1}(v))G^{-1}(v)}{g(G^{-1}(v))}}{(g(G^{-1}(v)))^2}\phi\psi\right)
 \end{align*}
 for all $v,\phi,\psi\in E$. In particular, since $G^{-1}(0)=0$ and $g(G^{-1}(0))=1$, we have $Q(0)=0$ and
 \[\langle Q''(0)\phi, \psi\rangle=\rn\left(\nabla \phi\cdot\nabla\psi+V(x)\phi\psi\right)\text{.}\]
 Applying the Taylor formula, we have
 \begin{align}\label{6}
 Q(v)&=Q(0)+\langle Q'(0), v\rangle+\frac{1}{2}\langle Q''(0)v, v\rangle+o(\|v\|^2)\notag\\
 &=\frac{1}{2}\rn\left( |\nabla v|^2+V(x)v^2\right)+o(\|v\|^2)
 \end{align}
 as $\|v\|\rightarrow0$. From $(f_1), (f_2)$ and $(iii)$ in Lemma \ref{lem11}, for any $\varepsilon>0$, there exists $C_\varepsilon>0$ such that
 \begin{align}
   \label{5x}
   |F(G^{-1}(v))|\leq \varepsilon v^2+C_\varepsilon |v|^p\text{,\quad for}~~p\in(2,2^*)\text{.}
 \end{align}
 Thus, from \eqref{5x} we have
\begin{align}
\label{7}\rn F(G^{-1}(v))=o(\|v\|^2)~\text{\quad as}~\|v\|\rightarrow0\text{.}
\end{align}
 By \eqref{6} and \eqref{7},  as $\|v\|\rightarrow 0$ we conclude
 \begin{align*}
   \mathcal{J}_k(v)&=Q(v)-\rn F(G^{-1}(v))\\
   &=\frac{1}{2}\rn\left(|\nabla v|^2+V(x)v^2\right)+o(\|v\|^2)\\
   &=\mathcal{B}(v)+o(\|v\|^2)\text{.}
 \end{align*}
 From this and \eqref{s1}, it is easy to see that $\mathcal{J}_k$ has a local linking at $0$.
\end{proof}

In consequence of Proposition \ref{pro2}, we shall investigate the critical groups of $\mathcal{J}_k$ at infinity. Firstly, we need the following lemma, which plays an important role in proving $C_k(\mathcal{J}_k,\infty)$.
\begin{lem}
  \label{l4}
  There exists $A>0$ such that $\frac{d}{dt}\big|_{t=1}\mathcal{J}_k(tv)<0$ if $\mathcal{J}_k(v)\leq -A$.
\end{lem}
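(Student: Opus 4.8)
The plan is to show that along rays $t\mapsto \mathcal J_k(tv)$ the functional is eventually decreasing, uniformly over the sublevel set $\{\mathcal J_k\le -A\}$, by exploiting the Ambrosetti--Rabinowitz condition $(f_3)$ (equivalently \eqref{f3}) together with the structural bounds in Lemma \ref{lem11}. First I would compute
\[
\frac{d}{dt}\Big|_{t=1}\mathcal J_k(tv)=\langle\mathcal J_k'(v),v\rangle
=\rn\Big(|\nabla v|^2+\widetilde V(x)\frac{G^{-1}(v)}{g(G^{-1}(v))}v\Big)-\rn\frac{\widetilde f(G^{-1}(v))}{g(G^{-1}(v))}v,
\]
and then form the combination $\langle\mathcal J_k'(v),v\rangle-\mu\,\mathcal J_k(v)$, where $\mu>\tfrac83$ is the constant from $(f_3)$. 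Using $\mathcal J_k(v)\le -A$ this gives
\[
\frac{d}{dt}\Big|_{t=1}\mathcal J_k(tv)\le \mu\,\mathcal J_k(v)+\big[\langle\mathcal J_k'(v),v\rangle-\mu\,\mathcal J_k(v)\big]\le -\mu A+\big[\langle\mathcal J_k'(v),v\rangle-\mu\,\mathcal J_k(v)\big],
\]
so it suffices to bound the bracket from above by a quantity controlled by $\|v\|^2$ and then, if necessary, absorb it.

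The heart of the estimate is the bracketed term. Expanding,
\[
\langle\mathcal J_k'(v),v\rangle-\mu\,\mathcal J_k(v)
=\Big(1-\tfrac{\mu}{2}\Big)\rn|\nabla v|^2
+\rn\widetilde V(x)\Big(\tfrac{G^{-1}(v)v}{g(G^{-1}(v))}-\tfrac{\mu}{2}|G^{-1}(v)|^2\Big)
+\rn\Big(\mu\widetilde F(G^{-1}(v))-\tfrac{\widetilde f(G^{-1}(v))}{g(G^{-1}(v))}v\Big).
\]
For the gradient term $1-\mu/2<0$ since $\mu>2$. For the nonlinear term I would use Lemma \ref{lem11}-$(v)$, which gives $v\ge g(G^{-1}(v))G^{-1}(v)$ for $v\ge0$ (and the reversed sign for $v<0$), so that $\widetilde f(G^{-1}(v))v/g(G^{-1}(v))\ge \widetilde f(G^{-1}(v))G^{-1}(v)$ because $\widetilde f(t)t\ge 0$; combined with \eqref{f3} this makes $\mu\widetilde F(G^{-1}(v))-\widetilde f(G^{-1}(v))v/g(G^{-1}(v))\le(\tfrac\mu2-1)m|G^{-1}(v)|^2$. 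For the potential term I would use Lemma \ref{lem11}-$(iii)$–$(v)$: since $1\le \tfrac{G^{-1}(v)}{v}\le \tfrac{2}{\sqrt3}$ and $g\in(\tfrac{\sqrt3}{2},1]$, the quotient $\tfrac{G^{-1}(v)v}{g(G^{-1}(v))}$ and $|G^{-1}(v)|^2$ are each comparable to $v^2$ (up to constants depending only on $\mu$, not on $v$ or $k$), so $\tfrac{G^{-1}(v)v}{g(G^{-1}(v))}-\tfrac{\mu}{2}|G^{-1}(v)|^2\le \big(\tfrac{2}{\sqrt3}-\tfrac{\mu}{2}\big)v^2$, which is again negative once $\mu>\tfrac{4}{\sqrt3}$ — and indeed $\tfrac83>\tfrac{4}{\sqrt3}$, which is exactly why the hypothesis $\mu>\tfrac83$ is imposed. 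Adding the $+(\tfrac\mu2-1)m|G^{-1}(v)|^2$ term, one still needs the coefficient of $v^2$ to stay negative; here $(\tfrac\mu2-1)m\le(\tfrac\mu2-1)(m+V(x)+\dots)$ can be absorbed into $\widetilde V(x)$-weighted terms, or one simply notes the total potential-type contribution is $\le C_\mu\rn v^2\le C_\mu\|v\|^2$ after using $\widetilde V\ge$ const.

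Putting the pieces together, $\langle\mathcal J_k'(v),v\rangle-\mu\,\mathcal J_k(v)\le (1-\tfrac\mu2)\rn|\nabla v|^2 + C_\mu\rn v^2$, and after relating $\rn|\nabla v|^2+\rn v^2$ to $\|v\|^2$ one gets $\langle\mathcal J_k'(v),v\rangle-\mu\,\mathcal J_k(v)\le C\|v\|^2$ at worst; the cleaner route is to show the bracket is in fact $\le 0$ for the right range of $\mu$, in which case $\frac{d}{dt}\big|_{t=1}\mathcal J_k(tv)\le \mu\,\mathcal J_k(v)\le -\mu A<0$ and we may take $A$ to be anything positive. If a residual $+C\|v\|^2$ term genuinely survives, the fallback is to first observe that $\mathcal J_k(v)\le -A$ forces $\|v\|$ large (since $\mathcal J_k$ is bounded below on bounded sets — from \eqref{5x} and \eqref{6} one sees $\mathcal J_k(v)\ge -C$ for $\|v\|\le R$, with $C=C(R)$), and then note that $\mathcal J_k(v)\le -A$ together with the growth $|\widetilde F(G^{-1}(v))|\le \varepsilon v^2 + C_\varepsilon|v|^p$ forces $\rn|v|^p\gtrsim \|v\|^2 + A$, so the superquadratic term dominates and the bracket estimate closes for $A$ large. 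I expect the main obstacle to be precisely the bookkeeping of these constants — verifying that $\mu>\tfrac83$ (rather than merely $\mu>2$) is exactly what is needed to keep the coefficient $\tfrac{2}{\sqrt3}-\tfrac\mu2 + (\tfrac\mu2-1)\cdot(\text{ratio})$ negative once the $g$-distortion factors $\tfrac{2}{\sqrt3}$ are accounted for — and in making sure all constants are independent of $k$ so the estimate is uniform, which matters for the later $k\to0^+$ analysis.
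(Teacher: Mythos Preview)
Your strategy of bounding $\langle\mathcal J_k'(v),v\rangle-\mu\,\mathcal J_k(v)$ directly is natural, and you correctly locate the distortion factors $\tfrac{2}{\sqrt3}$ from Lemma~\ref{lem11} and the threshold $\mu>\tfrac83$. But the ``cleaner route'' (bracket $\le0$) cannot close when $V$ is indefinite, and the obstruction is structural rather than mere bookkeeping. After your estimates the bracket is bounded above by
\[
\Big(1-\tfrac\mu2\Big)\rn|\nabla v|^2+\Big(\tfrac{2}{\sqrt3}-\tfrac\mu2\Big)\rn\widetilde V(x)\,|G^{-1}(v)|^2+\Big(\tfrac\mu2-1\Big)m\rn|G^{-1}(v)|^2,
\]
and the last term---arising from the shift $\widetilde f(t)=f(t)+mt$---is strictly positive. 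Trying to absorb it via $\widetilde V\ge\delta_0>0$ requires $(\tfrac\mu2-\tfrac{2}{\sqrt3})\delta_0\ge(\tfrac\mu2-1)m$; with $m=\delta_0-\inf V$ this becomes $\delta_0\,(1-\tfrac{2}{\sqrt3})\ge(\tfrac\mu2-1)(-\inf V)$, and since $1-\tfrac{2}{\sqrt3}<0$ the left side is negative while the right side is nonnegative as soon as $\inf V\le0$. Thus in the genuinely indefinite setting the residual $+C\int|G^{-1}(v)|^2$ survives. Your fallback does not repair this: once you invoke \eqref{f3}, no negative superlinear term $-\int|v|^p$ remains in the bracket, so the information $\int|v|^p\gtrsim\|v\|^2+A$ has nowhere to be inserted, and the inequality $\langle\mathcal J_k'(v),v\rangle\le-\mu A+C\int v^2$ says nothing when $\int v^2$ is large.

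The paper takes a different, soft route: argue by contradiction, assuming $\mathcal J_k(v_n)\le-n$ while $\langle\mathcal J_k'(v_n),v_n\rangle\ge0$. One first observes $\zeta_n^2:=\int|\nabla v_n|^2+\widetilde V\,|G^{-1}(v_n)|^2\to\infty$, and then---using essentially your linear combination, but only to extract a \emph{lower} bound on $\int|G^{-1}(v_n)|^2$---shows that the rescaled functions $w_n=G^{-1}(v_n)/\zeta_n$ are bounded in $E$ with $\liminf|w_n|_2>0$; this is precisely where $\mu>\tfrac83$ enters. The compact embedding $E\hookrightarrow L^2$ produces a nonzero limit $w$, and Fatou's lemma together with $(f_3)$ forces $\zeta_n^{-2}\int\widetilde f(G^{-1}(v_n))G^{-1}(v_n)\to+\infty$. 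Feeding this back into the upper bound $\langle\mathcal J_k'(v_n),v_n\rangle\le\tfrac{2}{\sqrt3}\zeta_n^2-\tfrac{\sqrt3}{2}\int\widetilde f(G^{-1}(v_n))G^{-1}(v_n)$ gives $\langle\mathcal J_k'(v_n),v_n\rangle\to-\infty$, the desired contradiction. The compactness/Fatou step is what replaces the quantitative absorption you are missing, and it is exactly what the indefinite potential forces.
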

\begin{proof}
  Otherwise, there is a sequence $\{v_n\}\subset E$ such that $\mathcal{J}_k(v_n)\leq -n$, but
  \begin{align}\label{271}
   \frac{d}{dt}\Bigg|_{t=1}\mathcal{J}_k(t v_n)&=\langle \mathcal{J}_k'(v_n), v_n\rangle\geq 0\text{.}
  \end{align}
  Note that, $f(s)s\geq 0$, by Lemma \ref{lem11} $(iii)$, $g\in C\left(\mathbb{R},\left(\frac{\sqrt{3}}{2}, 1\right]\right)$ and \eqref{271}, we obtain
  \begin{align}
    \label{27}
\langle \mathcal{J}_k'(v_n), v_n\rangle&=\rn \left(|\nabla v_n|^2+\widetilde{V}(x)\frac{G^{-1}(v_n)v_n}{g(G^{-1}(v_n))}\right)-\rn \widetilde{f}(G^{-1}(v_n))\frac{v_n}{g(G^{-1}(v_n))}\notag\\
  &=\rn \left(|\nabla v_n|^2+\widetilde{V}(x)G^{-1}(v_n)^2\frac{v_n}{G^{-1}(v_n)g(G^{-1}(v_n))}\right)\notag\\
  &\quad-\rn \widetilde{f}(G^{-1}(v_n))G^{-1}(v_n)\frac{v_n}{G^{-1}(v_n)g(G^{-1}(v_n))}\notag\\
  &\leq\frac{2}{\sqrt{3}}\rn \left(|\nabla v_n|^2+\widetilde{V}(x)G^{-1}(v_n)^2\right)-\frac{\sqrt{3}}{2}\rn\widetilde{f}(G^{-1}(v_n))G^{-1}(v_n)\text{.}
  \end{align}
By \eqref{27} and \eqref{f3}, we obtain
  \begin{align}
    \label{29}
    0&\geq \frac{3\mu}{4}\mathcal{J}_k(v_n)-\frac{\sqrt{3}}{2}\langle \mathcal{J}_k'(v_n), v_n\rangle\notag\\
    &\geq\left(\frac{3\mu}{8}-1\right)\rn\left(|\nabla v_n|^2+\widetilde{V}(x)|G^{-1}(v_n)|^2\right)\notag\\
    &\quad+\frac{3}{4}\rn \left(\widetilde{f}(G^{-1}(v_n))G^{-1}(v_n)-\mu\widetilde{ F}(G^{-1}(v_n))\right)\notag\\
    &\geq\left(\frac{3\mu }{8}-1\right)\rn\left(|\nabla v_n|^2+\widetilde{V}(x)|G^{-1}(v_n)|^2\right)\notag\\
    &\quad-\left(\frac{3\mu}{8}-\frac{3}{4}\right)\rn m G^{-1}(v_n)^2\text{.}
  \end{align}
  We claim that
  \[\zeta_n=\left\{\rn\left(|\nabla v_n|^2+\widetilde{V}(x)|G^{-1}(v_n)|^2\right)\right\}^{\frac{1}{2}}\rightarrow\infty\text{.}\]
 Otherwise, up to a subsequence, there exists a constant $C>0$ such that $\zeta_n\leq C$. By Lemma \ref{lem11}-$(iii)$, we obtain that $\{v_n\}$ is bounded in $E$.  Thus, $\mathcal{J}_k(v_n)$ is bounded, contradicting the fact that $\mathcal{J}_k(v_n)\leq -n$.
Set
  \[w_n=\frac{G^{-1}(v_n)}{\zeta_n}\text{.}\]
  Then $\{w_n\}$ is bounded $E$. Along with a subsequence, we may assume that
  \begin{align}\label{t4}w_n\rightharpoonup w~~ \text{in}~~E\text{,\quad }w_n\rightarrow w ~~\text{in}~~L^2\text{,\quad} w_n\rightarrow w ~~\text{a.e. in }~~\R\text{.} \end{align}
 Multiplying both sides of \eqref{29} by $\zeta_n^{-2}$, we have
 \[\left(\frac{3\mu}{8}-\frac{3}{4}\right)m\rn |w_n|^2\geq\left(\frac{3\mu }{8}-1\right)\text{.}\]
 This implies that $w\neq \mathbf{0}$ since $\mu>\frac{8}{3}$ and \eqref{t4}. Similar to \eqref{f1}, we obtain
 \begin{align}
   \label{f2}
   \frac{1}{\zeta_n^2}\rn \widetilde{f}(G^{-1}(v_n))G^{-1}(v_n)&=\rn \frac{\widetilde{f}(G^{-1}(v_n))G^{-1}(v_n)}{G^{-1}(v_n)^2}w_n^2\notag\\
   &\geq\rn \frac{f(G^{-1}(v_n))G^{-1}(v_n)}{G^{-1}(v_n)^2}w_n^2+m\rn w_n^2\notag\\
   &\geq\rn \frac{\mu F(G^{-1}(v_n))}{G^{-1}(v_n)^2}w_n^2+m\rn w_n^2\rightarrow\infty\text{.}
 \end{align}
 From \eqref{27}, Fatou Lemma and \eqref{f2}, it follows that
 \begin{align*}
   0&\leq\langle \mathcal{J}_k'(v_n), v_n\rangle\\
   &=\rn\left(| \nabla v_n|^2+\widetilde{V}(x)\frac{G^{-1}(v_n)v_n}{g(G^{-1}(v_n))}\right)-\rn \widetilde{f}(G^{-1}(v_n))\frac{v_n}{g(G^{-1}(v_n))}\\
   &\leq\frac{2}{\sqrt{3}}\rn\left(| \nabla v_n|^2+\widetilde{V}(x)(G^{-1}(v_n))^2\right)-\frac{\sqrt{3}}{2}\rn\widetilde{f}(G^{-1}(v_n))G^{-1}(v_n)\\
   &\leq\zeta_n^2\left\{\frac{2}{\sqrt{3}}-\frac{\sqrt{3}}{2}\rn\frac{\widetilde{f}(G^{-1}(v_n))G^{-1}(v_n)}{\zeta_n^2}\right\}\rightarrow-\infty\text{,}
 \end{align*}
 which is a contradiction. Thus, the conclusion of the Lemma \ref{l4} must be true.
\end{proof}

\begin{lem}
  \label{l5}
  Under our assumptions, $C_l(\mathcal{J}_k,\infty)\cong 0$ for $l\in \mathbb{N}$.
\end{lem}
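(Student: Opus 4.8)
The plan is to identify $C_l(\mathcal{J}_k,\infty)$ with $H_l(E,\mathcal{J}_k^{-a})$, where $\mathcal{J}_k^{-a}:=\{v\in E:\mathcal{J}_k(v)\le -a\}$ and $a$ is large, and to prove that $\mathcal{J}_k^{-a}$ is a strong deformation retract of $E\setminus\{0\}$; since $E$ is an infinite-dimensional Hilbert space, $E\setminus\{0\}$ is contractible, and then the long exact sequence of the pair finishes the proof. Two preliminary remarks fix the framework. First, Lemma \ref{l4} furnishes $A>0$ with $\langle\mathcal{J}_k'(v),v\rangle<0$ whenever $\mathcal{J}_k(v)\le -A$; in particular $\mathcal{J}_k$ has no critical point in $\{\mathcal{J}_k\le -A\}$, so all critical values exceed $-A$, and together with the Cerami condition (Lemma \ref{l1}) the critical group at infinity is well defined through any level $-a$ with $a>A$. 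Second, from $(f_3)$ one gets $F(s)\ge c_0|s|^{\mu}$ for $|s|\ge 1$, and since $\mu>\tfrac{8}{3}>2$ and $1\le G^{-1}(t)/t\le\tfrac{2}{\sqrt3}$ by Lemma \ref{lem11}-$(iii)$, for each fixed $v\ne 0$ one has
\[\mathcal{J}_k(tv)\le C_v t^{2}-c_v t^{\mu}\longrightarrow -\infty\qquad(t\to+\infty),\]
with constants $C_v,c_v>0$ depending on $v$.

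Next I would promote Lemma \ref{l4} to a statement about rays: \emph{if $\mathcal{J}_k(t_1v)\le -A$ for some $t_1>0$, then $t\mapsto\mathcal{J}_k(tv)$ is strictly decreasing on $[t_1,\infty)$.} Applying Lemma \ref{l4} to $t_1v$ shows $\tfrac{d}{dt}\big|_{t=t_1}\mathcal{J}_k(tv)<0$; and if this derivative first vanished at some $t_0>t_1$, then $\mathcal{J}_k(t_0v)<\mathcal{J}_k(t_1v)\le -A$, so Lemma \ref{l4} applied to $t_0v$ would give $t_0\,\tfrac{d}{dt}\big|_{t=t_0}\mathcal{J}_k(tv)<0$, a contradiction. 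Consequently, for $a>A$ and $v\ne 0$ the quantity
\[T(v):=\inf\{t>0:\mathcal{J}_k(tv)\le -a\}\]
is finite (by the divergence just recorded) and positive (because $\mathcal{J}_k(tv)\to\mathcal{J}_k(0)=0$ as $t\to 0^{+}$); moreover $\mathcal{J}_k(T(v)v)=-a$, one has $\mathcal{J}_k(tv)>-a$ for $0<t<T(v)$ by the definition of the infimum, and — this is where the ray monotonicity is essential — $\mathcal{J}_k(tv)\le -a$ for every $t\ge T(v)$, since the trajectory cannot rise back above $-a$ once it has reached the level $-A$. Because $\tfrac{d}{dt}\big|_{t=T(v)}\mathcal{J}_k(tv)<0$ (Lemma \ref{l4} once more), the implicit function theorem gives $T\in C\bigl(E\setminus\{0\},(0,\infty)\bigr)$.

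Finally I would put $M(v):=\max\{T(v),1\}$ and define
\[H:[0,1]\times\bigl(E\setminus\{0\}\bigr)\to E\setminus\{0\},\qquad H(s,v)=\bigl(1+s(M(v)-1)\bigr)v.\]
Here $H$ is continuous, $H(0,\cdot)=\mathrm{id}$, the scalar $1+s(M(v)-1)$ lies in $[1,M(v)]$ and hence is never zero, $H(1,v)=M(v)v\in\mathcal{J}_k^{-a}$ for all $v$ (using the properties of $T$ above, distinguishing $T(v)\ge 1$ from $T(v)<1$), and $H(s,v)=v$ for all $s$ whenever $v\in\mathcal{J}_k^{-a}$, because then $T(v)\le 1$ and $M(v)=1$. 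Thus $H$ is a strong deformation retraction of $E\setminus\{0\}$ onto $\mathcal{J}_k^{-a}$. Since $E$ is an infinite-dimensional Hilbert space, $E\setminus\{0\}$ deformation retracts onto its unit sphere, which is contractible; hence $\mathcal{J}_k^{-a}$ is contractible, and the exact homology sequence of the pair $(E,\mathcal{J}_k^{-a})$ yields $H_l(E,\mathcal{J}_k^{-a})=0$ for every $l\in\mathbb{N}$, that is, $C_l(\mathcal{J}_k,\infty)\cong 0$. The step I expect to require the most care is the non-return property of $t\mapsto\mathcal{J}_k(tv)$ together with the resulting well-definedness and continuity of $T$; once these are secured, the remaining homotopy-theoretic bookkeeping is routine.
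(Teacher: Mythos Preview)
Your proof is correct and follows essentially the same route as the paper's: both use Lemma~\ref{l4} to get ray monotonicity below level $-A$, extract a continuous crossing-time function via the implicit function theorem, and build a deformation identifying the sublevel set with a contractible piece of $E$ (you retract $E\setminus\{0\}$, the paper retracts $E\setminus B$). Your write-up is in fact more careful than the paper's on the non-return property and the explicit homotopy, which the paper leaves largely implicit.
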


\begin{proof}
  Let $B$ be the unit ball in $E$ and $S=\partial B$. Without loss of generality, we may assume that
  \begin{align*}
    -A<\inf_{\|v\|\leq 2}\mathcal{J}_k(v)\text{,}
  \end{align*}
where $A$ is given in Lemma \ref{l4}. Since $|G^{-1}(v)|\leq \frac{2}{\sqrt{3}}|v|$, for $w\in S$, reasoning as \eqref{f1} we deduce
\begin{align*}
  \mathcal{J}_k(sw)=&\frac{1}{2}\rn \left(|\nabla(sw)|^2+\widetilde{V}(x)|G^{-1}(sw)|^2\right)-\rn \widetilde{F}(G^{-1}(sw))\\
  \leq&\frac{2s^2}{3}\rn \left(|\nabla w|^2+\widetilde{V}(x)w^2\right)-\rn \widetilde{F}(G^{-1}(sw))\\
  \leq&s^2\left(\frac{2}{3}-\rn \frac{\widetilde{F}(G^{-1}(sw))}{s^2}\right)\rightarrow-\infty,\quad \text{as}~~s\rightarrow\infty\text{.}
\end{align*}
Thus, there exists $s_w>0$ such that
\[\mathcal{J}_k(s_ww)=-A\text{.}\]
Define $v=s_ww$, we obtain
\begin{align*}
  \frac{d}{ds}\Bigg|_{s=s_w}\mathcal{J}_k(sw)=\frac{1}{s_w}\frac{d}{dt}\Bigg|_{t=1}\mathcal{J}_k(tv)<0\text{.}
\end{align*}
Therefore, by the implicit function theorem, the map
\[w\mapsto s_w\]
It is a continuous function on $S$. Using this map we can construct a deformation from $X\backslash B$ to $(\mathcal{J}_k)_{-A}$. Then, we deduce via the homotopic invariance of singular homology
\[C_l(\mathcal{J}_k,\infty)=H_l(E, (\mathcal{J}_k)_{-A})\cong H_l(E, E\backslash B)=0\text{,\quad}\text{for}~~ l\in\mathbb{N}\text{.}\]
The proof of the lemma is complete.
\end{proof}

\subsection{$L^\infty$-Estimate of the solutions}

Here, using standard elliptic regularity\cite{MR1814364}, we will establish an estimate from above for $\|v_k\|_\infty$, where $v_k$ is a solution of Eq. \eqref{eq3}. However, this information is not enough to obtain that $\|v_k\|\leq\sqrt{\frac{1}{\alpha k}}$, where $\alpha$ is given in section 2. To this end, we must verify that the Sobolev norm of $v_k$ is uniformly bounded, independent of $k>0$.

Using $u=G^{-1}(v)$, we define the set of critical points as follows:
\[\mathcal{N}_k=\{v\in E\setminus \{0\}: \mathcal{J}_k'(v)=0\}\text{,\quad}\mathcal{M}_k=\{u\in E\setminus \{0\}: \mathcal{I}_k'(u)=0\}.\] %
Denote
\[b_k=\inf_{v\in \mathcal{N}_k}\mathcal{J}_k(v)\text{,\quad} m_k=\inf_{u\in \mathcal{M}_k}\mathcal{I}_k(u).\] %
It is obvious that $b_k=m_k$ for $k>0$ fixed.
\begin{lem}\label{bk}
  There exists a positive constant $b>0$ such that $b_k\leq b$ for all $k>0$.
\end{lem}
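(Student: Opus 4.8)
The plan is to produce a single $k$-independent test function and feed it into each functional $\mathcal{J}_k$ (equivalently $\mathcal{I}_k$), then extract a critical point on $\mathcal{N}_k$ whose energy is controlled by a minimax value that does not depend on $k$. First I would fix a nonzero function $\varphi \in C_c^\infty(\mathbb{R}^N)$; since $g\in C^1(\mathbb{R},(\tfrac{\sqrt3}{2},1])$ and by Lemma \ref{lem11}-$(iii)$ we have the two-sided bounds $\tfrac{\sqrt3}{2}\le g \le 1$ and $|t|\le |G^{-1}(t)|\le \tfrac{2}{\sqrt3}|t|$, the "good" quadratic part of $\mathcal{J}_k$ evaluated at $t\varphi$ satisfies
\begin{align*}
\frac12\rn\bigl(|\nabla(t\varphi)|^2+\widetilde V(x)|G^{-1}(t\varphi)|^2\bigr)\le \frac{2t^2}{3}\rn\bigl(|\nabla\varphi|^2+\widetilde V(x)\varphi^2\bigr)=:C_1 t^2,
\end{align*}
with $C_1$ independent of $k$. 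For the nonlinear part, $(f_3)$ gives the standard superquadratic lower bound $\widetilde F(s)\ge c_1|s|^\mu - c_2 s^2$ with $\mu>\tfrac83>2$ and constants independent of $k$; combined with $|G^{-1}(t\varphi)|\ge |t\varphi|$ this yields $\rn \widetilde F(G^{-1}(t\varphi))\ge c_1 t^\mu \rn|\varphi|^\mu - c_2 t^2\rn\varphi^2$. Hence $\mathcal{J}_k(t\varphi)\le C_1 t^2 + C_2 t^2 - c_1 t^\mu\rn|\varphi|^\mu \to -\infty$ as $t\to+\infty$, with all constants independent of $k$, so there is a $t_0>0$ (independent of $k$) with $\mathcal{J}_k(t_0\varphi)\le 0$ for every $k>0$.

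Next I would run the Mountain Pass geometry uniformly in $k$. From \eqref{5x}, \eqref{6}–\eqref{7} (or directly from $(f_1)$–$(f_2)$ and Lemma \ref{lem11}-$(iii)$) one gets $\mathcal{J}_k(v)\ge \tfrac14\|v\|^2 - C\|v\|^p$ on $E$, wait—this uses coercivity of the full quadratic form, which fails here because $V$ is indefinite. Instead I work on $\mathcal{M}_k$ via the functional $\mathcal{I}_k$ and use the Nehari-type characterization; more robustly, I note that $b_k=m_k$ and that since $\mathcal{J}_k$ satisfies the Cerami condition (Lemma \ref{l1}) and $0$ is a strict local linking point (Lemma \ref{l2}), $b_k$ coincides with the least critical value on $\mathcal{N}_k$, which is bounded above by the linking value $c_k:=\inf_{\gamma}\max_{u\in\gamma}\mathcal{J}_k(u)$ over paths in a fixed finite-dimensional-plus-ball configuration joining $0$-side to $t_0\varphi$. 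Taking the straight segment $\gamma_0(t)=t\,t_0\varphi$, $t\in[0,1]$, together with a fixed compact piece in $E^-$ realizing the local linking (which contributes a $k$-independent bound since on $E^-$, $\mathcal{J}_k(v)=\mathcal{B}(v)+o(\|v\|^2)\le 0$ near $0$ and the superquadratic term only helps), we get $b_k\le \max_{t\in[0,1]}\mathcal{J}_k(t t_0\varphi)\le \max_{t\ge 0}\bigl(C_3 t^2 - c_1 (t t_0)^\mu \rn|\varphi|^\mu\bigr)=:b$, a finite constant depending only on $\varphi$, $t_0$ and the structural constants, hence independent of $k$.

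The main obstacle is handling the indefiniteness of $V$ cleanly: one cannot simply say $\mathcal{J}_k$ has a positive mountain-pass level, so the upper bound on $b_k$ must come from a genuine linking configuration whose "cap" is $k$-independent. The cleanest way around this, which I would adopt, is to avoid the linking machinery entirely and argue directly on $\mathcal{N}_k$: take the $k$-independent function $w_0:=t_0\varphi$ with $\mathcal{J}_k(w_0)\le 0$ and $\langle \mathcal{J}_k'(w_0),w_0\rangle<0$ (both with uniform constants, since $\langle\mathcal{J}_k'(t\varphi),t\varphi\rangle$ is, up to the harmless factors $g,G^{-1}/t$, a fixed superquadratic function of $t$ that becomes negative past a $k$-independent threshold); then the Cerami-based deformation used in Lemma \ref{l5} descends from $w_0$ to a critical point $v_k\in\mathcal{N}_k$ with $\mathcal{J}_k(v_k)\le \mathcal{J}_k(w_0)\le 0\le b$. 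Either route gives $b_k\le b$ for all $k>0$; I would present the direct $\mathcal{N}_k$ argument as the primary proof and remark that the uniform constants all trace back to the fixed test function $\varphi$ and the $k$-independent bounds $\tfrac{\sqrt3}{2}\le g\le 1$, $|t|\le|G^{-1}(t)|\le\tfrac{2}{\sqrt3}|t|$ from Lemma \ref{lem11}.
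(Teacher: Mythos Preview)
Your $k$-independent upper bound on $\max_{t\ge 0}\mathcal{J}_k(t\varphi)$ is essentially correct and parallels the paper's computation. The paper does it more cleanly, however: it works with $\mathcal{I}_k$ rather than $\mathcal{J}_k$, uses only the inequality $g\le 1$ to obtain
\[
\mathcal{I}_k(t\varphi)\le \frac{t^2}{2}\rn\bigl(|\nabla\varphi|^2+V(x)\varphi^2\bigr)-\rn F(t\varphi),
\]
which is literally the $k=0$ functional evaluated at $t\varphi$, and it fixes $\varphi\in E^+$ (not an arbitrary $C_c^\infty$ function) so that the quadratic coefficient $C_\varphi=\rn(|\nabla\varphi|^2+V\varphi^2)>0$ and the fibering map $t\mapsto\mathcal{I}_k(t\varphi)$ has the right shape. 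This avoids the $\widetilde V$, $G^{-1}$, and $c_2 t^2$ bookkeeping entirely.

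The genuine gap is in your final step. The claim that ``the Cerami-based deformation used in Lemma \ref{l5} descends from $w_0$ to a critical point $v_k\in\mathcal{N}_k$ with $\mathcal{J}_k(v_k)\le\mathcal{J}_k(w_0)\le 0$'' is not correct: Lemma \ref{l5} establishes a homotopy equivalence between $E\setminus B$ and a sublevel set in order to compute $C_l(\mathcal{J}_k,\infty)$; it does not produce critical points, and a gradient-flow descent starting from $w_0$ need not converge to anything (the flow may run off to $-\infty$). Your earlier linking sketch is closer to a workable route, but as written it is too vague---you would have to specify the full linking set $Q\subset E^-\oplus\mathbb{R}\varphi$, verify $\mathcal{J}_k\le 0$ on $\partial Q$ uniformly in $k$, and then bound $\sup_Q\mathcal{J}_k$ (not merely the supremum over the ray) by a $k$-independent constant. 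The paper sidesteps all of this by asserting directly that $m_k\le\max_{t\in[0,T_*]}\mathcal{I}_k(t\varphi)$, the point being that the maximum along the ray is attained where $\langle\mathcal{I}_k'(t\varphi),t\varphi\rangle=0$, so one is bounding $m_k$ by a value on the associated Nehari-type set.
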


\begin{pf}
  Let $\varphi\in E^+$ fixed. Denote $\Psi(t)=\mathcal{I}_k(t\varphi)$ and $C_\varphi=\rn |\nabla \varphi|^2+V(x)\varphi^2$. It is clear that $C_\varphi>0$. Let $t\geq 1$. According to \eqref{F1}, we have
  \begin{align*}
    \Psi(t)&\leq\frac{t^2}{2}C_{\varphi}-\int_{|\varphi|\geq 1}|t\varphi|^{\mu}\\
    &\leq t^2\left(\frac{1}{2}C_{\varphi}-t^{\mu-2}\int_{|\varphi|\geq 1}|\varphi|^\mu\right)
  \end{align*}
  which implies $\Psi(t)<0$ if $t>T_*:=\left(\frac{C_\varphi}{2\int_{|\varphi|\geq1}|\varphi|^\mu}\right)^{\frac{1}{\mu-2}}$. Then
  \begin{align*}
    b_k=m_k\leq \max_{t\in[0,T_*]}\Psi(t)\leq \max_{t\in[0,T_*]}\left(\frac{t^2}{2}C_{\varphi}-t^{\mu}\int_{|\varphi|\geq 1}|\varphi|^\mu\right)\leq b\text{.}
  \end{align*}
\end{pf}
\begin{lem}
  \label{lemm}
  The solution $v_k$ of Eq. \eqref{eq3} satisfies $\|v_k\|\leq C$.
\end{lem}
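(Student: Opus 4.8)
The plan is to adapt the blow-up argument from the proof of Lemma~\ref{l4}, replacing the ad hoc bound $\mathcal{J}_k(v)\le -n$ used there by the uniform energy bound of Lemma~\ref{bk}, and keeping careful track of the fact that every constant produced by $g$ and $G^{-1}$ in Lemma~\ref{lem11} is independent of $k>0$. Suppose, for contradiction, that $\|v_k\|$ is not bounded uniformly in $k>0$; then there exist $k_n>0$ and nontrivial critical points $v_n:=v_{k_n}$ of $\mathcal{J}_{k_n}$ with $\|v_n\|\to\infty$. Since $v_k$ is a least-energy nontrivial solution, Lemma~\ref{bk} gives $\mathcal{J}_{k_n}(v_n)\le b$ with $b$ independent of $k$. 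Writing $\zeta_n^2:=\rn\left(|\nabla v_n|^2+\widetilde{V}(x)|G^{-1}(v_n)|^2\right)$, Lemma~\ref{lem11}-$(iii)$ gives $\|v_n\|^2\le\zeta_n^2\le\frac{4}{3}\|v_n\|^2$, so $\zeta_n\to\infty$.

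Next I would run the estimates \eqref{27}--\eqref{29} verbatim; they use only Lemma~\ref{lem11}-$(iii)$, the bound $g\in C(\RR,(\frac{\sqrt{3}}{2},1])$, the sign condition $f(s)s\ge0$ and \eqref{f3}, all of which are $k$-independent. Since $v_n$ is a critical point, $\langle\mathcal{J}_{k_n}'(v_n),v_n\rangle=0$, so those estimates give
\[
\frac{3\mu}{4}\,b\ \ge\ \frac{3\mu}{4}\,\mathcal{J}_{k_n}(v_n)\ \ge\ \left(\frac{3\mu}{8}-1\right)\zeta_n^2-\left(\frac{3\mu}{8}-\frac{3}{4}\right)m\rn|G^{-1}(v_n)|^2 .
\]
Put $w_n=G^{-1}(v_n)/\zeta_n$. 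As in \eqref{t2} one has $\|w_n\|^2\le\frac{4}{3}$, so, along a subsequence, $w_n\rightharpoonup w$ in $E$ and $w_n\to w$ in $L^2(\R)$ and a.e., by the compactness of $E\hookrightarrow L^2(\R)$. Dividing the last display by $\zeta_n^2$ and letting $n\to\infty$, and using $\mu>\frac{8}{3}$ (so $\frac{3\mu}{8}-1>0$), we obtain $\left(\frac{3\mu}{8}-\frac{3}{4}\right)m|w|_2^2\ge\frac{3\mu}{8}-1>0$; hence $w\ne0$ and $\Lambda:=\{x\in\R:w(x)\ne0\}$ has positive Lebesgue measure.

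To reach a contradiction I would argue exactly as for \eqref{f2}: for a.e.\ $x\in\Lambda$ one has $|G^{-1}(v_n(x))|=\zeta_n|w_n(x)|\to\infty$, so $(f_3)$ and Fatou's lemma force $\zeta_n^{-2}\rn\widetilde{f}(G^{-1}(v_n))\,G^{-1}(v_n)\to+\infty$; substituting this into \eqref{27} and using $\langle\mathcal{J}_{k_n}'(v_n),v_n\rangle=0$ gives
\[
0\ =\ \langle\mathcal{J}_{k_n}'(v_n),v_n\rangle\ \le\ \zeta_n^2\left(\frac{2}{\sqrt{3}}-\frac{\sqrt{3}}{2}\,\zeta_n^{-2}\rn\widetilde{f}(G^{-1}(v_n))\,G^{-1}(v_n)\right)\ \longrightarrow\ -\infty ,
\]
which is impossible. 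Therefore $\|v_k\|\le C$ for all $k>0$, with $C$ independent of $k$. I expect the only genuine difficulty to be bookkeeping: one must check that the constant $b$ from Lemma~\ref{bk} and every constant inherited from $g$ and $G^{-1}$ are independent of $k$, which is precisely what the $k$-uniform form of Lemma~\ref{lem11} and the explicit definition \eqref{g1} of $g$ guarantee.
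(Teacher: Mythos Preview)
Your proof is correct and follows essentially the same blow-up strategy as the paper: assume $\zeta_n\to\infty$, use the uniform energy bound $b$ from Lemma~\ref{bk} to show the normalized sequence $w_n=G^{-1}(v_n)/\zeta_n$ has a nonzero weak limit, then apply Fatou's lemma and $(f_3)$ to reach a contradiction. The only cosmetic difference is that you follow the template of Lemma~\ref{l4} (testing with $v_n$ itself, using \eqref{27}--\eqref{29}, and obtaining the contradiction from $\langle\mathcal{J}_{k_n}'(v_n),v_n\rangle=0$), whereas the paper follows the template of Lemma~\ref{l1} (testing with $\psi_n=g(G^{-1}(v_n))G^{-1}(v_n)$ and obtaining the contradiction from $\mathcal{J}_{k_n}(v_n)\le b$); both variants are already present in the paper and lead to the same conclusion.
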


\begin{proof}
Let $v_k$ be solution of Eq. \eqref{eq3} such that
\[\mathcal{J}_k(v_k)=b_k<+\infty\text{,\quad} \mathcal{J}_k'(v_k)=0\text{.}\]
We claim:
\[\zeta_k^2=\rn \left(|\nabla v_k|^2+\widetilde{V}(x)|G^{-1}(v_k)|^2\right)\leq C\text{.}\]
Otherwise, there exist a subsequence $\{v_{k_n}\}$ of $\{v_k\}$ such that
\[\zeta_{k_n}^2\rightarrow+\infty\text{,\qquad} \mathcal{J}_{k_n}'(v_{k_n})=0\text{,\qquad} \mathcal{J}_{k_n}(v_{k_n}):=b_{k_n}\text{.}\]
Taking $\psi_n=g(G^{-1}(v_{k_n}))G^{-1}(v_{k_n})$, by Lemma \ref{bk} we have
\begin{align*}
    \mu b+o(1)&\geq\mu b_{n_k}=\mu\mathcal{J}_{k_n}(v_{k_n})-\langle \mathcal{J}_{k_n}'(v_{k_n}), \psi_n\rangle\\
    &=\rn \left(\frac{\mu}{2}-\left(\frac{g'(G^{-1}(v_{k_n}))G^{-1}(v_{k_n})}{g(G^{-1}(v_{k_n}))}+1\right)\right)|\nabla v_{k_n}|^2\\
    &\quad+\left(\frac{\mu}{2}-1\right)\rn \widetilde{V}(x) |G^{-1}(v_{k_n})|^2\\
    &\quad+\rn\left(\widetilde{ f}(G^{-1}(v_{k_n}))G^{-1}(v_{k_n})-\mu\widetilde{F}(G^{-1}(v_{k_n}))\right)\\
     &\geq\left(\frac{\mu}{2}-1\right)\zeta_{k_n}^2-\left(\frac{\mu}{2}-1\right)\rn m |G^{-1}(v_{k_n})|^2\text{.}
  \end{align*}
Consider the sequence
  \[w_n=\frac{G^{-1}(v_{k_n})}{\zeta_{k_n}}.\]
The remaining proof is similar to the proof of Lemma \ref{l1}. Thus, we show that $\zeta_k^2\leq C$. Using $(iii)$ of Lemma \ref{lem11} again, $\|v_k\|\leq C$ is independent of $k$.
\end{proof}
\begin{lem}
  \label{lem41}
 If $v_k$ is a solution of \eqref{eq3}, then there exists $C_0>0$ independent of $k>0$ such that $\|v_k\|_\infty\leq C_0$.
\end{lem}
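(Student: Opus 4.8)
The plan is to run the standard Moser iteration on the transformed equation \eqref{eq3}, but carefully tracking the dependence on $k$ so that the final $L^\infty$-bound $C_0$ is uniform. By Lemma \ref{lemm}, $\|v_k\|\leq C$ with $C$ independent of $k$, and since $g\in C^1(\RR,(\tfrac{\sqrt3}{2},1])$, the transformed nonlinearity has a growth bound independent of $k$: from $(f_1)$, $(f_2)$ and Lemma \ref{lem11}-$(iii)$ one gets $\bigl|\tfrac{\widetilde f(G^{-1}(v))}{g(G^{-1}(v))}\bigr|\le \varepsilon|v|+C_\varepsilon|v|^{p-1}$ with constants not depending on $k$. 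First I would rewrite \eqref{eq3} in divergence form for $v_k=G(u_k)$: testing the weak formulation of $\mathcal{J}_k'(v_k)=0$ shows $v_k$ solves $-\Delta v_k + \widetilde V(x)\tfrac{G^{-1}(v_k)}{g(G^{-1}(v_k))} = \tfrac{\widetilde f(G^{-1}(v_k))}{g(G^{-1}(v_k))}$ weakly, and the term $\widetilde V(x)\tfrac{G^{-1}(v_k)}{g(G^{-1}(v_k))}$ has the right sign and is controlled, up to the $\varepsilon|v_k|$ part of the nonlinearity, so that $v_k$ is a weak subsolution of $-\Delta |v_k| \le a_k(x)|v_k|$ with $a_k(x)= C(1+|v_k|^{p-2})\in L^{N/2}_{\mathrm{loc}}$ and, more importantly, with $|a_k|_{N/2}$ bounded uniformly in $k$ by the uniform bound $\|v_k\|\le C$ and the Sobolev embedding.

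Next I would carry out the Moser iteration itself. For $L>0$ set $v_{k,L}=\min\{|v_k|,L\}$ and test the equation with $|v_k|v_{k,L}^{2(s-1)}$ (or the standard truncated power); using the ellipticity, the sign of the potential term, and the growth bound, one arrives at an inequality of the form
\begin{align*}
\Bigl(\int_{\R}\bigl(|v_k|\,v_{k,L}^{s-1}\bigr)^{2^*}\Bigr)^{2/2^*}\le C\,s^2\int_{\R}(1+|v_k|^{p-2})\,|v_k|^2 v_{k,L}^{2(s-1)},
\end{align*}
where $C$ does not depend on $k$, $L$, or $s$. Splitting the right-hand side via H\"older on the set where $|v_k|\le 1$ and its complement, and using $|v_k|_{p}\le C$, $|v_k|_{2^*}\le C$ uniformly in $k$, one obtains a recursive estimate on $|v_k|_{\chi^j q_0}$ with $\chi=2^*/2>1$; letting $L\to\infty$ by Fatou at each stage and then iterating $j\to\infty$ gives $|v_k|_\infty\le C_0$ with $C_0$ depending only on the uniform constants, hence independent of $k$.

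The main obstacle is bookkeeping the $k$-independence: one must make sure that every constant entering the iteration — the Sobolev constant $S$, the coefficients in \eqref{grow}, and the bound on $|a_k|_{N/2}$ — is controlled purely through $\|v_k\|\le C$ (Lemma \ref{lemm}) and the fixed structural constants of $g$, $V$, $f$, with no hidden dependence on $k$ through $\alpha$, $\beta$ or the threshold $\sqrt{1/(\alpha k)}$. Since $g$ takes values in the fixed interval $(\tfrac{\sqrt3}{2},1]$ regardless of $k$, and $G^{-1}$ is globally Lipschitz with constant $\tfrac{2}{\sqrt3}$ by Lemma \ref{lem11}-$(iii)$, the transformed problem genuinely decouples from $k$ except through $v_k$ itself, so this is a matter of care rather than a real difficulty. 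Once $C_0$ is in hand, choosing $k$ small enough that $|u_k|_\infty = |G^{-1}(v_k)|_\infty\le \tfrac{2}{\sqrt3}C_0\le\sqrt{1/(\alpha k)}$ completes the link back to the original equation \eqref{eq1}, which is the point of the estimate.
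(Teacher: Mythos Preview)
Your approach is correct and the bookkeeping of $k$-independence is exactly the right concern: since $g$ ranges in $(\tfrac{\sqrt3}{2},1]$ and $|G^{-1}(t)|\le\tfrac{2}{\sqrt3}|t|$ independently of $k$, and since Lemma~\ref{lemm} gives $\|v_k\|\le C$ uniformly, a global Moser iteration yields a $k$-free bound. Note, however, that the paper organizes the argument differently. It splits $\R$ into an interior ball and its complement. On the interior it does \emph{not} iterate: it rewrites the equation as $-\Delta v_k=a(x)v_k$ with $a\in L^{N/2}_{\mathrm{loc}}$, invokes the Br\'ezis--Kato theorem to get $v_k\in L^q_{\mathrm{loc}}$ for all $q<\infty$, and then applies Calderon--Zygmund and the Sobolev embedding $W^{2,q}\hookrightarrow C^{1,\alpha}$ to conclude $v_k\in L^\infty(B_R)$. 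On the exterior $B_R^c$, the paper uses $(V)$ to ensure $V\ge M>0$ for $|x|>R$ and runs the truncated-power Moser iteration there, with the positivity of $V$ absorbing the $\varepsilon|v_k|$ part of the nonlinearity. Your route is more economical: by working with the shifted potential $\widetilde V>0$ (which has strictly positive infimum by $(V)$) you absorb the linear part globally and avoid the interior regularity machinery altogether. The paper's route, on the other hand, delivers extra information ($C^{1,\alpha}$ interior regularity) as a byproduct and keeps the Moser step confined to a region where the sign condition on $V$ itself is automatic. One small point to tighten in your write-up: when you display $a_k(x)=C(1+|v_k|^{p-2})$, the constant term ``$1$'' is precisely the piece you said you would absorb into the potential; either drop it from $a_k$ after absorption or, as the paper does in its iteration step, keep track of that absorption explicitly so the recursive inequality involves only the $|v_k|^{p-2}$ factor.
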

\begin{proof}
  For any $r>0$, let $B_1=B_r(0)$, $B_2=B_{2r}(0)$. Noting that $v_k$ is a weak solution of the following problem
  \begin{align*}
    -\Delta u=\left(\frac{f(G^{-1}(u))}{g(G^{-1}(u))u}-V(x) \frac{G^{-1}(u)}{g(G^{-1}(u))u}\right)u \qquad\text{in}~~~~\R\text{.}
  \end{align*}
Let
\[a(x):=\frac{f(G^{-1}(v_k))}{g(G^{-1}(v_k))v_k}-V(x) \frac{G^{-1}(v_k)}{g(G^{-1}(v_k))v_k}\text{.}\]
By \eqref{grow}, $g\in C^1(\mathbb{R}, (\frac{\sqrt{3}}{2},1])$, $V\in C(\mathbb{R}^N)$, $(iii)$ in Lemma \ref{lem11} and Lemma \ref{lemm}, we have
\begin{align*}
\int_{B_2}|a(x)|^{\frac{N}{2}}&\leq C\int_{B_2}\left(\left|\frac{f(G^{-1}(v_k))}{v_kg(G^{-1}(v_k))}\right|^{\frac{N}{2}}+\left|V(x) \frac{G^{-1}(v_k)}{g(G^{-1}(v_k))v_k}\right|^{\frac{N}{2}}\right)\notag\\
&\leq C\int_{B_2}|v_k|^{(p-2)\frac{N}{2}}+C|B_2|\notag\\
&\leq C|B_2|+C\left(\int_{B_2}|v_k|^p\right)^{\frac{N(p-2)}{2p}}|B_2|^{\frac{2p-N(p-2)}{2p}}<+\infty\text{.}
\end{align*}
The Br\'{e}zis-Kato Theorem implies that $v_k\in L^q(B_2)$ for any $1\leq q<+\infty$. Note that
\begin{align*}
\int_{B_2}|a(x)v_k|^q&=\int_{B_2}\left|\frac{f(G^{-1}(v_k))}{g(G^{-1}(v_k))}-V(x) \frac{G^{-1}(v_k)}{g(G^{-1}(v_k))}\right|^q\\
&\leq C\int_{B_2}|v_k|^q+C\int_{B_2} |v_k|^{q(p-1)}<+\infty\text{,}
\end{align*}
we have
\[-\Delta v_k=a(x)v_k\in L^q(B_2)\text{.}\]
By the Calderon-Zygmund inequality and Schauder estimates, see e.g. \cite[Theorem 9.9]{MR1814364}, one has $v_k\in W^{2,q}(B_2)$. It follows from classical interior $L^q$-estimates that
\begin{align}
  \label{t5}
  \|v_k\|_{W^{2,q}(B_1)}\leq C(|a(x)v_k|_{L^q(B_2)}+|v_k|_{L^q(B_2)})\text{,}
\end{align}
where $C=C(r,q)$. For $q\geq 2$, by the Sobolev embedding theorem we get $v_k\in C^{1,\alpha}(\overline{B}_1)$ for some $\alpha\in(0,1)$. Subsequently, we deduce that $v_k\in L^\infty(B_1)$.

By $(V)$, for any $M>0$, there exists $R>0$ such that
\[V(x)\geq M>0\quad\text{for}~~|x|>R\text{.}\]
For each $m\in\mathbb{N}$ and $\gamma>1$, fix
\[A_m=\{x\in B_R^c: |v_k|^{\gamma-1}\leq m\}\text{,\quad} D_m=B_R^c\backslash A_m\]
and
\begin{align*}
v_m=
\left\{
\begin{array}{lll}
 v_k|v_k|^{2(\gamma-1)}\text{,} &\quad\text{in}~~A_m\text{,}\\
 m^2v_k\text{,}&\quad \text{in}~~D_m\text{,}\\
 0\text{,}&\quad \text{in}~~ B_R\text{.}
\end{array}
\right.
\end{align*}
Note that $v_m\in H^1(\R)$, $v_m\leq |v_k|^{2\gamma-1}$ and
\begin{align}\label{vm}
  \nabla v_m= \left\{
  \begin{array}{lll}
   (2\gamma-1)|v_k|^{2(\gamma-1)}\nabla v_k\text{,}&\quad\text{in} ~~A_m\text{,}\\
   m^2\nabla v_k\text{,}&\quad\text{in}~~D_m\text{,}\\
   0\text{,}&\quad\text{in}~~B_R\text{.}
  \end{array}
  \right.
\end{align}
For $\gamma>1$, we use $v_m$ as a test function in $\langle \mathcal{J}_k'(v_k), v_m\rangle=0$ to obtain
\begin{align}\label{text}
  \rn \left(\nabla v_k\cdot\nabla v_m+V(x)\frac{G^{-1}(v_k)}{g(G^{-1}(v_k))}v_m\right)=\rn \frac{f(G^{-1}(v_k))}{g(G^{-1}(v_k))}v_m\text{.}
\end{align}
By \eqref{vm}, we have
\begin{align}\label{vm1}
  \rn \nabla v_k\cdot\nabla v_m=(2\gamma-1)\int_{A_m}|v_k|^{2(\gamma-1)}|\nabla v_k|^2+m^2\int_{D_m}|\nabla v_k|^2\text{.}
\end{align}
Define
\begin{align*}
 w_m= \left\{
 \begin{array}{lll}
   v_k|v_k|^{\gamma-1}\text{,}&\quad\text{in}~~A_m\text{,}\\
   mv_k\text{,}&\quad\text{in}~~D_m\text{,}\\
   0\text{,}&\quad\text{in}~~B_R\text{.}
 \end{array}
  \right.
\end{align*}
Then, we have $w_m^2=v_{k}v_m\leq |v_{k}|^{2\gamma}$ and
\begin{align*}
  \nabla w_m=\left\{
  \begin{array}{lll}
    \gamma|v_{k}|^{\gamma-1}\nabla v_{k}\text{,}&\quad\text{in} ~~A_m\text{,}\\
    m\nabla v_{k}\text{,}&\quad\text{in}~~D_m\text{,}\\
    0,&\quad\text{in} ~~B_R\text{.}
  \end{array}
  \right.
\end{align*}
Subsequently,
\begin{align}\label{vm2}
  \rn |\nabla w_m|^2=\gamma^2\int_{A_m} |v_{k}|^{2(\gamma-1)}|\nabla v_{k}|^2+m^2\int_{D_m}|\nabla v_{k}|^2\text{.}
\end{align}
By \eqref{vm1} and \eqref{vm2}, we have
\begin{align}
  \label{text1}
  \rn \left(|\nabla w_m|^2-\nabla v_{k}\cdot\nabla v_m\right)=(\gamma-1)^2\int_{A_m}|v_{k}|^{2(\gamma-1)}|\nabla v_{k}|^2\text{.}
\end{align}
It follows from $(f_2)$ and $(f_3)$ that
\begin{align}
  \label{grow111}
  |f(t)|\leq \varepsilon|t|+C_\varepsilon |t|^{p-1}\text{,\quad} p\in(2,2^*)\text{.}
\end{align}
By \eqref{text1}, \eqref{vm1}, \eqref{text}, \eqref{grow111} and $(iii)$ of Lemma \ref{lem11}, we get
\begin{align*}
 \rn |\nabla w_m|^2&\leq\left[\frac{(\gamma-1)^2}{2\gamma-1}+1\right]\rn \nabla v_{k}\cdot\nabla v_m\\
 &\leq \gamma^2\rn \left(\nabla v_{k}\cdot\nabla v_m +(V(x)-\varepsilon)\frac{G^{-1}(v_{k})}{g(G^{-1}(v_{k}))}v_m\right)\\
 &=\gamma^2\rn \frac{f(G^{-1}(v_{k}))}{g(G^{-1}(v_{k}))}v_m-\gamma^2\rn \varepsilon \frac{G^{-1}(v_{k})}{g(G^{-1}(v_{k}))}v_m\\
 &\leq C \gamma^2\rn |v_{k}^{p-1}v_m|=C\gamma^2\int_{B_R^c} |v_{k}|^{p-2}w_m^2\text{.}
\end{align*}
By Sobolev imbedding and H\"{o}lder inequalities, we have
\begin{align*}
 \left( \int_{A_m}|w_m|^{2^*}\right)^{(N-2)/N}&\leq S^{-1}\int_{\mathbb{R}^N} |\nabla w_m|^2\leq CS^{-1}\gamma^2\int_{B_R^c} |v_{k}|^{p-2}w_m^2\\
 &\leq C\gamma^2S^{-1}|v_{k}|_{2^*}^{p-2}\left(\int_{B_R^c} |w_m|^{2p_1}\right)^{1/p_1}\text{,}
\end{align*}
where $1/p_1+(p-2)/2^*=1$. Since $|w_m|\leq |v_{k}|^\gamma$ in $B_R^c$ and $|w_m|=|v_{k}|^\gamma$ in $A_m$, we have
\begin{align*}
  \left(\int_{A_m}|v_{k}|^{\gamma 2^*}\right)^{(N-2)/N}\leq C\gamma^2S^{-1}|v_{k}|_{2^*}^{p-2}\left(\int_{B_R^c} |v_{k}|^{2\gamma p_1}\right)^{1/p_1}\text{.}
\end{align*}
Let $m\rightarrow\infty$, we see that
\begin{align}
  \label{gj}
  |v_{k}|_{L^{\gamma 2^*}(B_R^c)}\leq \gamma^{1/\gamma}\left(CS^{-1}|v_{k}|_{2^*}^{p-2}\right)^{1/(2\gamma)} |v_{k}|_{L^{2\gamma p_1}(B_R^c)}\text{.}
\end{align}
Now we can carry out an iteration process. Taking $\tau=2^*/(2p_1)$ and $\gamma=\tau$ in \eqref{gj}, we obtain
\begin{align*}
  |v_{k}|_{L^{\tau 2^*}(B_R^c)}\leq \tau^{1/\tau}\left(CS^{-1}|v_{k}|_{2^*}^{p-2}\right)^{1/(2\tau)} |v_{k}|_{L^{2^*} (B_R^c)}\text{.}
\end{align*}
Therefore, as $\gamma=\tau^2$ in \eqref{gj}, we obtain
\begin{align}\label{gj2}
  |v_{k}|_{L^{\tau^2 2^*}(B_R^c)}&\leq \tau^{2/\tau^2}\left(CS^{-1}|v_{k}|_{2^*}^{p-2}\right)^{1/(2\tau^2)} |v_{k}|_{L^{\tau 2^*} (B_R^c)}\notag\\
  &\leq\tau^{1/\tau+2/\tau^2}\left(CS^{-1}|v_{k}|_{2^*}^{p-2}\right)^{1/2\left(\frac{1}{\tau}+\frac{1}{\tau^2}\right)} |v_{k}|_{L^{2^*} (B_R^c)}\text{.}
\end{align}
Iterating \eqref{gj} and $\gamma=\tau^i(i=1,2,\cdots)$, it concludes that
\begin{align*}
 |v_{k}|_{L^{\tau^i 2^*}(B_R^c)}\leq \tau^{\sum_{i=1}^j\frac{j}{\tau^i}}\left(CS^{-1}|v_{k}|_{2^*}^{p-2}\right)^{\frac{1}{2}\sum_{i=1}^j\frac{1}{\tau^i}}|v_{k}|_{L^{2^*} (B_R^c)}\text{.}
\end{align*}
Thus, by $v_{k}\in H^1$ and taking the limit of $j\rightarrow+\infty$, we get
\begin{align*}
  |v_{k}|_{L^\infty(B_R^c)}\leq\tau^{\frac{1}{(\tau-1)^2}}\left(CS^{-1}|v_{k}|_{2^*}^{p-2}\right)^{\frac{1}{2(\tau-1)}}|v_{k}|_{L^{2^*} (B_R^c)}\leq C\text{.}
\end{align*}
Together with the fact $v_k\in L^\infty(B_1)$, there exists a constant $C_0>0$ such that
\[\|v_k\|_\infty\leq C_0\text{.}\]
Then we complete the proof.
\end{proof}

\begin{proof}
[Proof of Theorem \ref{th1}]
We have proved that $\mathcal{J}_k$ satisfies the Cerami condition. By Lemma \ref{l2}, $\mathcal{J}_k$ has a local linking at $0$ with respect to the decomposition $E=E^+\oplus E^-$. Together with Proposition \ref{pro3}, we obtain
\[C_\nu(\mathcal{J}_k,0)\neq 0\]
for $\nu=\dim E^-.$ From Lemma \ref{l5}, we have
\[C_\nu(\mathcal{J}_k, \infty)\neq C_\nu(\mathcal{J}_k, 0)\text{.}\]
This conclude by Proposition \ref{pro2} that $\mathcal{J}_k$ has a nontrivial critical point $v_k$, i.e., $u_k=G^{-1}(v_k)$ is a nontrivial solution of \eqref{eq3}. Using Lemma \ref{lem11}-$(iii)$ and Lemma \ref{lemm}, for $\hat{k}=\frac{3}{4\alpha c_0^2}$ we obtain
\[\|u_k\|_\infty=\|G^{-1}(v_k)\|_\infty\leq\frac{2}{\sqrt{3}}\|v_k\|_\infty\leq\sqrt{\frac{1}{\alpha k}}\text{,\quad}\forall k\in(0,\hat{k}]\text{.}\]
Thus, $u_k=G^{-1}(v_k)$ is a nontrivial solution of \eqref{eq1}.
\end{proof}

\section{Multiplicity result}

The proof of Theorem \ref{th21} relies on the following two key lemmas. Furthermore, we shall establish Theorem \ref{th2} by applying the following symmetric mountain pass theorem due to Ambrosetti-Rabinowitz \cite{MR0370183}.

\begin{lem}
  \label{lem111}
  For any $k\in(0,1)$ fixed, under the assumptions of Theorem \ref{th1}, the functional $\mathcal{J}_k$ admits a sequence of critical points $\{v_{j}^{k}\}_{j\geq1}$ with the following properties: there eixst $\beta_j>\alpha_j$ both of which are independent of $k>0$,  $\alpha_j\rightarrow\infty$ as $j\rightarrow\infty$ and $\mathcal{J}_k(v_j^k)\in[\alpha_j,\beta_j]$.
\end{lem}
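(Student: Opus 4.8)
The plan is to run a $\mathbb{Z}_2$-equivariant minimax argument for $\mathcal{J}_k$ — the Fountain Theorem of Bartsch (see also the symmetric Mountain Pass Theorem of Ambrosetti--Rabinowitz \cite{MR0370183}) — while keeping track of the minimax levels precisely enough to see that the geometric data do not depend on $k\in(0,1)$. First I record that $\mathcal{J}_k$ is even: $g$ is even, hence $G(t)=\int_0^tg$ and $G^{-1}$ are odd, and since $f$ is odd $F$ is even, so $\mathcal{J}_k(-v)=\mathcal{J}_k(v)$ and $\mathcal{J}_k(0)=0$; also $\mathcal{J}_k$ satisfies the Cerami condition at every level by Lemma \ref{l1}. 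For the fountain decomposition I take the spectral basis: since $\widetilde V=V+m>0$, the eigenfunctions form a complete orthogonal system of $E$ with $\|\varphi_j\|^2=\lambda_j+m$, and I set $X_j=\mathbb{R}\varphi_j$, $Y_j=\mathrm{span}\{\varphi_1,\dots,\varphi_j\}$, $Z_j=\overline{\bigoplus_{i\ge j}X_i}$.

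For the ``$Z_j$ at a small sphere'' condition, writing $\mathcal{J}_k$ in the $\widetilde V$-form and using Lemma \ref{lem11}-$(iii)$, i.e.\ $|v|\le|G^{-1}(v)|\le\tfrac2{\sqrt3}|v|$ (which holds for \emph{every} $k>0$), I get
\[
\mathcal{J}_k(v)\ \ge\ \tfrac12\|v\|^2-\tfrac{2m}{3}|v|_2^2-\rn F(G^{-1}(v)),\qquad v\in E .
\]
On $Z_j$ the orthogonality gives $|v|_2^2\le(\lambda_j+m)^{-1}\|v\|^2$, so, as $\lambda_j\to\infty$, the quadratic part is $\ge\tfrac14\|v\|^2$ once $j$ is large; combining this with the bound $|F(G^{-1}(v))|\le\varepsilon v^2+C_\varepsilon|v|^p$ from \eqref{5x} and the standard fact $\theta_j:=\sup\{|v|_p:v\in Z_j,\ \|v\|=1\}\to0$ (compactness of $E\hookrightarrow L^p$ together with $\varphi_j\rightharpoonup0$ in $E$) yields $\mathcal{J}_k(v)\ge\tfrac18\|v\|^2-C\theta_j^{\,p}\|v\|^p$ on $Z_j$ for $j$ large. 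Taking $r_j=(16C\theta_j^{\,p})^{-1/(p-2)}$ gives
\[
\inf_{v\in Z_j,\ \|v\|=r_j}\mathcal{J}_k(v)\ \ge\ \tfrac1{16}\,r_j^2=:\alpha_j,\qquad \alpha_j\to\infty ,
\]
and $r_j,\alpha_j$ are independent of $k$.

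For the ``$Y_j$ outside a large ball'' condition, I work on the finite-dimensional, $k$-independent space $Y_j$. From $(f_3)$ one has $F(s)>0$ for $s\neq0$, and integrating $\mu F\le sf=sF'$ gives $F(s)\ge F(1)|s|^\mu$ for $|s|\ge1$, where $\mu>\tfrac83>2$. Bounding the potential term by $\max(V,0)\le\widetilde V$ and $|G^{-1}(v)|\le\tfrac2{\sqrt3}|v|$, and estimating $\rn F(G^{-1}(v))\ge F(1)\big(|v|_\mu^\mu-|v|_2^2\big)$ (from $F\ge0$, $F(s)\ge F(1)|s|^\mu$ for $|s|\ge1$, $|G^{-1}(v)|\ge|v|$, and $\mu>2$), I obtain $\mathcal{J}_k(v)\le\tfrac23\|v\|^2-F(1)|v|_\mu^\mu+F(1)|v|_2^2$; hence, by equivalence of norms on $Y_j$,
\[
\mathcal{J}_k(v)\ \le\ C_j|v|_\mu^2-F(1)|v|_\mu^\mu,\qquad v\in Y_j ,
\]
with $C_j$ independent of $k$. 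Since $\mu>2$ this tends to $-\infty$ as $\|v\|\to\infty$ in $Y_j$, uniformly in $k$; so there is $\rho_j>r_j$, independent of $k$, with $\mathcal{J}_k\le0$ on $Y_j\cap\partial B_{\rho_j}$, while $\sup_{Y_j}\mathcal{J}_k\le\beta_j:=\max_{t\ge0}\big(C_jt^2-F(1)t^\mu\big)<\infty$ is also $k$-independent.

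The Fountain Theorem then supplies, for each $j\ge1$, a critical point $v_j^k$ of $\mathcal{J}_k$ with $\alpha_j\le\mathcal{J}_k(v_j^k)\le\sup_{Y_j}\mathcal{J}_k\le\beta_j$, and $\mathcal{J}_k(v_j^k)\to\infty$ as $j\to\infty$ since $\alpha_j\to\infty$; replacing $\beta_j$ by $\beta_j+1$ if needed makes $\beta_j>\alpha_j$. The delicate point — and the main obstacle — is the uniformity in $k$: $\mathcal{J}_k$ depends on $k$ only through $G=G_k$, and every inequality above uses $G_k$ exclusively through the $k$-independent two-sided bound $1\le G^{-1}(t)/t\le\tfrac2{\sqrt3}$ of Lemma \ref{lem11}-$(iii)$; all remaining ingredients — the spectral gap $\lambda_j\to\infty$, the embedding constants $\theta_j$, and the norm equivalences on $Y_j$ — involve no $k$ at all. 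The indefiniteness of $V$ is harmless: it is absorbed by passing to $\widetilde V>0$ in the lower estimate and by $\max(V,0)\le\widetilde V$ in the upper one.
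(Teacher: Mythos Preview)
Your argument is correct. The paper, however, does not use the Fountain Theorem; it sets up the Rabinowitz--style symmetric mountain pass with genus, following \cite{Liu2013}: one defines classes
\[
\Gamma_j=\Bigl\{H(\Sigma_{R_n}\cap E_n\setminus Y):\ H\in\Upsilon_n,\ n\ge j,\ Y=-Y\ \text{open},\ \gamma(Y)\le n-j\Bigr\},
\]
shows the intersection property $B\cap\partial\Sigma_\rho\cap E_{j-1}^\perp\neq\emptyset$ for every $B\in\Gamma_j$, and then extracts the lower bound $\alpha_j$ on $E_{j-1}^\perp$ via an interpolation estimate $|v|_p\le|v|_{2^*}^{a}|v|_2^{1-a}$ together with $|v|_2^2\le\lambda_j^{-1}\|v\|^2$; for the upper bound the paper introduces a $k$-free comparison functional
\[
\Phi(v)=\tfrac12\!\int\!\Bigl(|\nabla v|^2+\tfrac43\widetilde V v^2\Bigr)-\int_{|v|\ge1}\!|v|^\mu
\]
and sets $\beta_j=\inf_{B\in\Gamma_j}\sup_B\Phi$, and finally runs a deformation argument by hand to see that each $c_j(k)$ is a critical value.

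Your route is tidier: you bypass the genus/intersection machinery entirely and read off both bounds from the standard Fountain geometry, getting $\alpha_j=\tfrac1{16}r_j^2$ from $\theta_j\to0$ and $\beta_j=\max_{t\ge0}(C_jt^2-F(1)t^\mu)$ from norm equivalence on $Y_j$. The paper's scheme buys a bit more flexibility (the classes $\Gamma_j$ are larger than the Fountain linking class, so in principle the resulting $c_j(k)$ could be sharper), but for the present purpose --- a sequence of critical values trapped in $k$-independent windows --- both approaches rest on the same observation you single out: all $k$-dependence of $\mathcal J_k$ is filtered through the uniform two-sided bound of Lemma~\ref{lem11}\,$(iii)$, so the linking radii and levels can be fixed once and for all. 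One cosmetic point: as in the paper (which starts at $j>\nu$ and then at $j>j^*$), your estimate $\tfrac14\|v\|^2$ on $Z_j$ only kicks in for large $j$, so the sequence $\{v^k_j\}_{j\ge1}$ is obtained after a harmless reindexing.
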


\begin{pf}
Let $E_n=\text{span}\{\varphi_1\text{,}\cdots\text{,}\varphi_n\}$. Then, $E=E_n\oplus E_n^{\perp}$ for $n=1\text{,}2\text{,}\cdots$, where $E_n^{\perp}$ is orthogonal to $E_n$ in $E$. Define
\[\Upsilon_n:=\{H\in C(\Sigma_{R_n}\cap E_n, E): H\text{~~is odd and ~~} H=\text{id on~~} \partial\Sigma_{R_n}\cap E_n\}\]
where $R_n$ is chosen by based on what follows in the proof and
\[\Sigma_R=\{u\in E\text{,\quad} \|u\|\leq R\text{~~with~~} R>0\}\text{.}\]
By $(f_3)$, it is clear that there exists a positive constant $C$ such that
\begin{align}\label{F1}
 F(s)\geq C|s|^\mu\text{,\quad for~~}|s|\geq 1\text{.}
\end{align}
Combining this result with  $(iii)$ of Lemma \ref{lem11}, for $v\in E_n\setminus \Sigma_{R_n}$ we obtain
\begin{align*}
  \mathcal{J}_k(v)&\leq\frac{1}{2}\rn \left(|\nabla v|^2+\frac{4}{3}\widetilde{V}(x)v^2\right)-\rn \widetilde{F}(G^{-1}(v))\\
  &\leq\frac{1}{2}\rn \left(|\nabla v|^2+\frac{4}{3}\widetilde{V}(x)v^2\right)-C\rn  |G^{-1}(v)|^{\mu}\\
  &\leq\frac{1}{2}\rn \left(|\nabla v|^2+\frac{4}{3}\widetilde{V}(x)v^2\right)-C\rn  |v|^{\mu}<0\text{,}
\end{align*}
where $R_n>0$ is chosen such that the last inequality holds due to the equivalence of norms in finite-dimensional spaces.

Subsequently, we may choose $R_n$ independent of $k>0$ such that $R_n>\rho_n$( see the following \eqref{rho1} for the definition of $\rho_n$). Set
\[
\Gamma_j := \left\{
    H(\Sigma_{R_n} \cap E_n \setminus Y) :
    \begin{aligned}
        & H \in \Upsilon_n\text{,} \ n \geq j\text{,} \ Y = -Y \subset \Sigma_{R_n} \cap E_n \\
        & \text{is open, and } \gamma(Y) \leq n - j
    \end{aligned}
\right\}
\]
where $\gamma(\cdot)$ is the genus. Define
\[c_j(k)=\inf_{B\in \Gamma_j}\sup_{v\in B} \mathcal{J}_k(v)\text{,\quad}j=\nu+1,\nu+2,\cdots\text{,}\]
where $\nu$ is the unique index such that $0\in(\lambda_\nu, \lambda_{\nu+1})$.

Our immediate goal is to establish that $c_j(k)$ are critical values of $\mathcal{J}_k$ and find intervals $[\alpha_j,\beta_j]$ with $\alpha_j<\beta_j$ containing $c_j(k)$, where $\alpha_j\rightarrow\infty$ as $j\rightarrow\infty$. Towards this goal, we break down the proof into the following steps.

\textbf{Step 1.} Following the idea in \cite[Lemma 2.4]{Liu2013}, we have an intersection property: if $0<\rho<R_n$ for all $n\geq j$, then for $B\in\Gamma_j$, we have
\begin{align}\label{intersection}B\cap \partial \Sigma_\rho\cap E_{j-1}^\perp\neq\emptyset\text{.}\end{align}

\textbf{Step 2.} We now estimate the upper bound  for $c_j(k)$.

By $(f_3)$, \eqref{F1} and $(iii)$ of lemma \ref{lem11}, we have
\begin{align}\label{phi}
  \mathcal{J}_k(v)&\leq \frac{1}{2}\rn \left(|\nabla v|^2+\frac{4}{3}\widetilde{V}(x)v^2\right)-\rn \widetilde{F}(G^{-1}(v))\notag\\
  &\leq\frac{1}{2}\rn \left(|\nabla v|^2+\frac{4}{3}\widetilde{V}(x)v^2\right)-\int_{|v|\geq 1} F(G^{-1}(v))\notag\\
  &\leq \frac{1}{2}\rn \left(|\nabla v|^2+\frac{4}{3}\widetilde{V}(x)v^2\right)-\int_{|v|\geq 1} |v|^\mu:=\Phi(v)\text{.}
\end{align}
Here, we define
\[\beta_j = \inf_{B \in \Gamma_j} \sup_{v \in B} \Phi(v)\text{,~~} \quad j=\nu+1\text{,~~} \nu+2\text{,~~}\cdots\text{.}\]
It follows from \eqref{phi} that  $c_j(k)\leq \beta_j$.

\textbf{Step 3.} We shall prove the lower bound for $c_j(k)$.

\eqref{intersection} implies that \[c_j(k)\geq\inf_{v\in \partial\Sigma_\rho\cap E_{j-1}^\perp}\mathcal{J}_k(v)\text{.}\]
 For $v\in \partial \Sigma_{\rho}\cap E_{j-1}^\perp$ with $j>\nu$, by \eqref{5x}, \eqref{s1}, $|G^{-1}(t)|\geq |t|$ and the H\"{o}lder inequality we have
\begin{align*}
  \mathcal{J}_k(v)
  &\geq \kappa\rn\left( |\nabla v|^2+V(x)v^2\right)-\varepsilon\rn v^2-C_\varepsilon\rn |v|^p\\
  &\geq \kappa\rn\left( |\nabla v|^2+V(x)v^2\right)-\varepsilon\rn v^2-C_\varepsilon |v|_{2^*}^{ap}|v|_2^{(1-a)p}\\
  &\geq \left(\frac{\lambda_{\nu}\kappa}{\lambda_{\nu}+1}-\varepsilon\right)\rho^2 -C_\varepsilon S^{\frac{-ap}{2}}\lambda_{j}^{-\frac{(1-a)p}{2}}\rho^p\\
  &\geq \rho^2\left[\frac{\lambda_\nu\kappa}{2(\lambda_\nu+1)}-C\lambda_{j}^{-\frac{(1-a)p}{2}}\rho^{p-2}\right]\text{,}
\end{align*}
where $a\in(0,1)$ satisfies $\frac{1}{p}=\frac{a}{2^*}+\frac{1-a}{2}$ and $\varepsilon=\frac{\lambda_\nu\kappa}{2(\lambda_\nu+1)}$. Thus, we may choose $\rho=\rho_j$ such that
\begin{align}
  \label{rho1}
  C\lambda_{j}^{-\frac{(1-a)p}{2}}\rho_{j}^{p-2}=\frac{\lambda_\nu\kappa}{4(\lambda_\nu+1)}\text{.}
\end{align}
Subsequently, $\mathcal{J}_k(v)\geq\frac{\lambda_\nu\kappa}{4(\lambda_\nu+1)}\rho_j^2$, and we obtain \[\alpha_j=\frac{\lambda_\nu\kappa}{4(\lambda_\nu+1)}\rho_j^2=C\left(\lambda_j^{\frac{(1-a)p}{2}}\right)^{\frac{2}{p-2}}\rightarrow+\infty\] as $j\rightarrow\infty$.

\textbf{Step 4.} There exists a $j^*\in\mathbb{N}$ with $j>j^*$ such that $c_j(k)$, $j=j^*+1, j^*+2,\cdots$, are critical values of $\mathcal{J}_k$.

If $c_j(k)$ is not a critical value of $\mathcal{J}_k$. By the deformation theorem \cite{MR1400007} (which in particular holds under the Cerami condition \cite{Bartolo1983}),  for $0<\overline{\varepsilon}<\min\{\alpha_j: j=\nu+1,\nu+2,\cdots\}$, then there exists an $\varepsilon\in (0,\overline{\varepsilon})$ and $\eta\in C([0,1]\times E, E)$ such that
\begin{enumerate}[$(i)$]
  \item $\eta(0,v)=v$ for all $v\in E$.
  \item $\eta(t,v)=v$ for all $t\in[0,1]$ if $\mathcal{J}_k(v)\not\in [c_j(k)-\overline{\varepsilon},c_j(k)+\overline{\varepsilon} ]$.
  \item  $\eta(1, A_{c_j(k)+\varepsilon}\setminus O)\subset A_{c_j(k)-\varepsilon}$, where $A_c=\{v\in E: \mathcal{J}_k(v)\leq c\}$ and $O$ is any neighborhood of $K_c:=\{v\in E: \mathcal{J}_k(v)=c\text{\quad and\quad} \mathcal{J}_k'(v)=0\}$.
  \item $\eta(t,v)$ is odd in $v$.
\end{enumerate}

 Here, it is clear that $\varphi=\eta(1,\cdot): E\rightarrow E$ is odd, and $\varphi=id$ on $\partial \Sigma_{R_n}\cap E_n$ for all $n\geq j$ since $\mathcal{J}_k(v)<0$ for $v\in\partial \Sigma_{R_n}\cap E_n$. According to the definition of $c_j(k)$, there exists $B\in \Gamma_j$ such that
 \[\sup_{v\in B}\mathcal{J}_k(v)\geq c_j(k)+\varepsilon\text{.}\]
 It follows from $(ii)$ and $A=\varphi(B)\in \Gamma_j$ that
 \[c_j(k)\leq\sup_{v\in A}\mathcal{J}_k(v)\leq c_j(k)-\varepsilon\text{,}\]
 which is a contradiction. Based on the reasoning above, we have shown that $\mathcal{J}_k$ has a sequence of critical points $\{v_{k,j}\}$ with $\mathcal{J}_k(v)\in[\alpha_j, \beta_j]$, and $\alpha_j\rightarrow\infty$ as $j\rightarrow\infty$.

\end{pf}

\begin{lem}\label{lemn}
  For any $m\in\mathbb{N}$ fixed. If $v_{k,j}$ is a critical point of $\mathcal{J}_k$ where $j=j^*+1, j^*+2,\cdots,j^*+m$, then there exists $M>0$ independent of $k$, such that $\|v_{k,j}\|\leq M$.
\end{lem}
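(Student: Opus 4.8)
The plan is to reduce the statement to a $k$-uniform energy bound and then repeat, almost verbatim, the boundedness argument already carried out in Lemma~\ref{lemm}. The crucial input from Lemma~\ref{lem111} is that the numbers $\alpha_j,\beta_j$ are independent of $k$, and that $j\mapsto\beta_j$ is non-decreasing (since $\Gamma_{j+1}\subset\Gamma_j$), while $j\mapsto\alpha_j$ is non-decreasing because $\lambda_j$ is; moreover $j^*$ may be chosen independent of $k$ (indeed one can take $j^*=\nu$, the deformation argument in Step 4 of Lemma~\ref{lem111} being valid for every $j>\nu$). Hence for every $k\in(0,1)$ and every $j\in\{j^*+1,\dots,j^*+m\}$ one has the $k$-uniform two-sided bound
\[\alpha_{j^*+1}\le\mathcal{J}_k(v_{k,j})\le\beta_{j^*+m}=:b_m,\]
with $b_m$ and $\alpha_{j^*+1}>0$ depending only on $m$ and on the fixed data of the problem, not on $k$ nor on the finitely many indices $j$ in play.

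Next I argue by contradiction. If no uniform bound $M$ exists, there are $k_n\in(0,1)$ and $j_n\in\{j^*+1,\dots,j^*+m\}$ with $\|v_{k_n,j_n}\|\to\infty$; since the index ranges over a finite set, after passing to a subsequence we may assume $j_n\equiv j_0$. By Lemma~\ref{lem11}-$(iii)$ one has $\|v_{k_n,j_0}\|^2\le\zeta_n^2$, where $\zeta_n^2:=\rn\left(|\nabla v_{k_n,j_0}|^2+\widetilde{V}(x)|G^{-1}(v_{k_n,j_0})|^2\right)$, so $\zeta_n\to\infty$. Writing $v_n:=v_{k_n,j_0}$ and using $\psi_n=g(G^{-1}(v_n))G^{-1}(v_n)$ as a test function in $\langle\mathcal{J}_{k_n}'(v_n),\psi_n\rangle=0$ — exactly as in Lemmas~\ref{l1} and \ref{lemm}, invoking $|\nabla\psi_n|\le|\nabla v_n|$, $|G^{-1}(v_n)g(G^{-1}(v_n))|\le\tfrac{2}{\sqrt3}|v_n|$, Lemma~\ref{lem11}-$(iv)$ and \eqref{f3} — we obtain
\[\mu b_m\ge\mu\mathcal{J}_{k_n}(v_n)-\langle\mathcal{J}_{k_n}'(v_n),\psi_n\rangle\ge\left(\frac{\mu}{2}-1\right)\zeta_n^2-\left(\frac{\mu}{2}-1\right)\rn m\,|G^{-1}(v_n)|^2.\]
Dividing by $\zeta_n^2$ and setting $w_n:=G^{-1}(v_n)/\zeta_n$ (which is bounded in $E$ by $\tfrac43$, hence $w_n\rightharpoonup w$ in $E$ and $w_n\to w$ in $L^2$), this forces $\left(\tfrac{\mu}{2}-1\right)m|w_n|_2^2\ge\tfrac{\mu}{2}-1+o(1)>0$, so that $w\neq0$ because $\mu>\tfrac83$.

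Finally, on the positive-measure set $\Lambda=\{w\neq0\}$ one has $|G^{-1}(v_n)|=\zeta_n|w_n|\to\infty$ a.e., so Fatou's lemma together with $(f_3)$ (equivalently \eqref{F1}) gives $\liminf_n\rn\widetilde{F}(G^{-1}(v_n))\,\zeta_n^{-2}=+\infty$, whence
\[\mathcal{J}_{k_n}(v_n)=\zeta_n^2\left(\frac12-\rn\frac{\widetilde{F}(G^{-1}(v_n))}{\zeta_n^2}\right)\longrightarrow-\infty,\]
contradicting $\mathcal{J}_{k_n}(v_n)\ge\alpha_{j^*+1}>-\infty$. Therefore $\zeta_{k,j}\le C$ for all $k\in(0,1)$ and all $j\in\{j^*+1,\dots,j^*+m\}$, and Lemma~\ref{lem11}-$(iii)$ yields $\|v_{k,j}\|\le\zeta_{k,j}\le M$ with $M$ independent of $k$. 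The only genuinely delicate point is the uniformity in $k$, which is taken care of entirely by the $k$-independence of $\alpha_j,\beta_j,j^*$ from Lemma~\ref{lem111} and by the finiteness of the index range (allowing us to fix $j_0$); the remaining estimates are a direct repetition of those already established in Lemmas~\ref{l1} and \ref{lemm}.
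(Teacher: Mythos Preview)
Your proof is correct and follows essentially the same approach as the paper's: argue by contradiction, use the $k$-independent energy bounds $[\alpha_j,\beta_j]$ from Lemma~\ref{lem111}, test with $\psi_n=g(G^{-1}(v_n))G^{-1}(v_n)$ to force the rescaled sequence $G^{-1}(v_n)/\zeta_n$ to have a nonzero weak limit, and then reach a contradiction via Fatou's lemma as in Lemma~\ref{l1}. Your version is in fact slightly more careful than the paper's in making explicit the uniform upper bound $b_m=\beta_{j^*+m}$ and the reduction to a fixed index $j_0$ via the finiteness of the index range.
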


\begin{proof}
  If the conclusion is not true, there exist a sequence $\{k_n\}\subset(0,1)$ and $v_{k_n}$ is a critical point of $\mathcal{J}_{k_n}$ with $c_{j}(k_n)=\mathcal{J}_{k_n}(v_{k_n})\in[\alpha_j,\beta_j]$, such that, along with a subsequence, $\|v_{k_n}\|\rightarrow\infty$ as $n\rightarrow\infty$ and $\lim_{n\rightarrow\infty}k_n=k_0$. Similar to the argument of Lemma \ref{l1}. For the convenience of readers, we rewrite $v_{k_n}=v_n$. It is easy to see that
\begin{align*}
\mu\beta_j\geq\mu c_j(k_n)&=\mu\mathcal{J}_{k_n}(v_n)-\langle \mathcal{J}_{k_n}'(v_n), \psi_n\rangle\\
     &\geq\left(\frac{\mu}{2}-1\right)\varrho_n^2-\left(\frac{\mu}{2}-1\right)\rn m |G^{-1}(v_n)|^2
  \end{align*}
  where
  \[\varrho_n=\left(\rn \left(|\nabla v_n|^2+\widetilde{V}(x)|G^{-1}(v_n)|^2\right)\right)^{\frac{1}{2}}\rightarrow\infty\] as $n\rightarrow\infty$. Let $\theta_n=\frac{G^{-1}(v_n)}{\varrho_n}$. By \eqref{t2}, it is obvious that $\{\theta_n\}$ is bounded in $E$ and $|\theta_n|_2^2>0$. This implies that $\theta_n(x)\rightarrow\theta(x)$ a.e. in $\mathbb{R}^N$ and the set $\Lambda=\{x\in\mathbb{R}^N: \theta(x)\neq 0\}$ has a positive Lebesgue measure. By \eqref{f1}, we get
\begin{align*}
   \alpha_{j^*}
   \leq\mathcal{J}_{k_n}(v_n)&=\frac{1}{2}\rn\left( |\nabla v_n|^2+\widetilde{V}(x)|G^{-1}(v_n)|^2\right)-\rn \widetilde{F}(G^{-1}(v_n))\\
    &=\zeta_n^2\left(\frac{1}{2}-\rn \frac{\widetilde{F}(G^{-1}(v_n))}{\varrho_n^2}\right)\\
    &\leq\varrho_n^2\left(\frac{1}{2}-\int_{\Lambda} \frac{\widetilde{F}(G^{-1}(v_n))}{|G^{-1}(v_n)|^2}\theta_n^2\right)\rightarrow-\infty\text{,}
  \end{align*}
  which is a contradiction. Thus, we finish the proof.
\end{proof}

Next, we consider the limiting behavior of critical points of $\mathcal{J}_k$ as $k\rightarrow 0^+$.
\begin{prop}
  \label{th1-1}
  Assume that $k_n\rightarrow 0^+$, $\{v_n\}\subset E$ is a sequence of critical points of $\mathcal{J}_{k_n}$ satisfying $\mathcal{J}_{k_n}'(v_n)=0$ and $\mathcal{J}_{k_n}(v_n)\leq C$ for some $C$ independent of $n$. Then, up to a subsequence, there exist some  $v$ such that $v_n\rightarrow v$ in $E$ and $v$ is a  solution for the following elliptic equation
  \begin{align*}
    -\Delta u+V(x)u=f(u)\text{,}\quad u\in H^1(\mathbb{R}^N).
  \end{align*}
\end{prop}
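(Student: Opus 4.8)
The plan is to exploit the uniform estimates already established (or their natural analogues) to extract a convergent subsequence whose limit solves the limiting equation. First, from $\mathcal{J}_{k_n}(v_n)\le C$ and $\mathcal{J}_{k_n}'(v_n)=0$, I would repeat verbatim the boundedness argument of Lemma~\ref{l1} (the Cerami-sequence argument, applied now with $c_{k_n}$ replaced by the uniform bound $C$ and using the test function $\psi_n=g(G^{-1}(v_n))G^{-1}(v_n)$) to deduce that $\zeta_n^2=\rn(|\nabla v_n|^2+\widetilde V(x)|G^{-1}(v_n)|^2)\le C'$ with $C'$ independent of $n$; since $g\in C^1(\RR,(\tfrac{\sqrt3}{2},1])$ and by Lemma~\ref{lem11}-$(iii)$ this forces $\|v_n\|\le C''$ uniformly in $n$. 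Passing to a subsequence, $v_n\rightharpoonup v$ in $E$, $v_n\to v$ in $L^p$ for $p\in[2,2^*)$, and $v_n\to v$ a.e. in $\R$, exactly as in \eqref{1}.

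Next I would pass to the limit in the weak formulation. For a fixed test function $\varphi\in C_c^\infty(\R)$ (or $\varphi\in E$), $\langle\mathcal{J}_{k_n}'(v_n),\varphi\rangle=0$ reads
\begin{align*}
\rn\Big(\nabla v_n\cdot\nabla\varphi+V(x)\tfrac{G^{-1}_{k_n}(v_n)}{g_{k_n}(G^{-1}_{k_n}(v_n))}\varphi\Big)=\rn \tfrac{f(G^{-1}_{k_n}(v_n))}{g_{k_n}(G^{-1}_{k_n}(v_n))}\varphi,
\end{align*}
where I write $g_{k_n},G^{-1}_{k_n}$ to emphasize the dependence on $k_n$. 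The gradient term converges by weak convergence. For the lower-order terms, the key observation is that as $k\to0^+$ one has $g_k\to 1$ uniformly on bounded sets (indeed $g_k(s)=\sqrt{1-ks^2}\to1$ for $s$ in a bounded set once $k$ is small, since then $s<\sqrt{1/(\alpha k)}$), hence $G^{-1}_k(t)\to t$ locally uniformly; combined with the a.e.\ convergence $v_n\to v$, the uniform $L^\infty$ bound on the relevant ratios (Lemma~\ref{lem11}-$(iii)$--$(v)$), the growth control \eqref{grow}, and the strong $L^p$ convergence, a routine dominated-convergence/Vitali argument gives
\begin{align*}
\tfrac{G^{-1}_{k_n}(v_n)}{g_{k_n}(G^{-1}_{k_n}(v_n))}\to v,\qquad \tfrac{f(G^{-1}_{k_n}(v_n))}{g_{k_n}(G^{-1}_{k_n}(v_n))}\to f(v)
\end{align*}
in $L^2_{loc}$ (and against the fixed $\varphi$). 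Therefore $v$ satisfies $\rn(\nabla v\cdot\nabla\varphi+V(x)v\varphi)=\rn f(v)\varphi$ for all $\varphi$, i.e.\ $v$ is a weak solution of $-\Delta u+V(x)u=f(u)$ in $H^1(\R)$.

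Finally, to upgrade weak convergence to strong convergence in $E$, I would test $\mathcal{J}_{k_n}'(v_n)=0$ with $v_n-v$ and compare with the equation for $v$ tested against $v_n-v$, mimicking the end of the proof of Lemma~\ref{l1}: write
\begin{align*}
o(1)=\langle\mathcal{J}_{k_n}'(v_n),v_n-v\rangle-\langle\mathcal{L}'(v),v_n-v\rangle,
\end{align*}
where $\mathcal{L}$ is the limiting functional; the nonlinear terms are $o(1)$ by \eqref{grow}, the Hölder inequality and $v_n\to v$ in $L^p$, exactly as in \eqref{3}, while the quadratic part yields $\rn|\nabla(v_n-v)|^2+\widetilde V(x)(\,\cdot\,)$ which, using monotonicity of $s\mapsto G^{-1}_k(s)/g_k(G^{-1}_k(s))$ and the lower bound \eqref{2}-type inequality (valid uniformly in small $k$ since the derivative is bounded below on bounded sets independently of $k$), dominates $C\|v_n-v\|^2$. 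Hence $v_n\to v$ in $E$. The main obstacle, and the only place requiring genuine care, is making the convergence $g_{k_n}\to1$, $G^{-1}_{k_n}\to\mathrm{id}$ interact correctly with the a.e.\ and $L^p$ convergence of $v_n$ inside the nonlinear terms — one must check that the arguments $G^{-1}_{k_n}(v_n)$ stay in the regime where $g_{k_n}$ is close to $1$ on the bulk of the domain, which follows from the uniform $L^\infty$ estimate of Lemma~\ref{lem41} (giving $\|v_n\|_\infty\le C_0$ independently of $k_n$) so that $g_{k_n}(G^{-1}_{k_n}(v_n))\to1$ uniformly; with that in hand all the limit passages are standard.
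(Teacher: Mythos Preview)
Your proposal is correct, but it takes a genuinely different route from the paper's own proof. The paper never tracks the $k$-dependence of $g_k$ and $G^{-1}_k$ explicitly; instead it immediately passes to the $u$-variables via $u_n=G^{-1}_{k_n}(v_n)$ and uses the tailored test function $\varphi=g(G^{-1}_{k_n}(v_n))\psi$ in $\langle\mathcal J_{k_n}'(v_n),\varphi\rangle=0$. This choice produces exactly the weak form of the original quasilinear equation for $u_n$, namely $-k_n\!\int u_n|\nabla u_n|^2\psi+\int(1-k_nu_n^2)\nabla u_n\cdot\nabla\psi+\int V u_n\psi-\int f(u_n)\psi=0$, and the $k_n$-terms vanish in the limit directly by the uniform $L^\infty$ bound. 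For strong convergence the paper does not use the monotonicity trick from Lemma~\ref{l1} that you invoke; it tests with $\varphi=g(u_n)u_n$ to obtain an energy identity for $u_n$, compares it with the limiting identity for $u_0$, and combines the resulting convergence of $\int|\nabla u_n|^2+\widetilde V u_n^2$ with weak lower semicontinuity to get $u_n\to u_0$ in $E$.

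Your approach---keeping everything in $v$-coordinates and exploiting $g_{k_n}\to1$, $G^{-1}_{k_n}\to\mathrm{id}$ uniformly on $\{|s|\le C_0\}$ together with the $L^\infty$ bound of Lemma~\ref{lem41}---is a legitimate alternative, and your strong-convergence argument via the split $\frac{G^{-1}_{k_n}(v_n)}{g_{k_n}}-v=\big(\frac{G^{-1}_{k_n}(v_n)}{g_{k_n}}-\frac{G^{-1}_{k_n}(v)}{g_{k_n}}\big)+\big(\frac{G^{-1}_{k_n}(v)}{g_{k_n}}-v\big)$ works because the derivative in \eqref{2} is in fact $\ge1$ uniformly in $k$ (by Lemma~\ref{lem11}-$(iv)$ and $g\le1$), and the second bracket is $O(k_n|v|)$ on $\{|v|\le C_0\}$, so the unbounded weight $\widetilde V$ is handled by Cauchy--Schwarz in the $E$-norm. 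What the paper's route buys is that the limiting equation emerges in one line without any analysis of how $G^{-1}_k$ depends on $k$; what your route buys is that the strong convergence step is a clean reprise of Lemma~\ref{l1} rather than an energy-balance plus lower-semicontinuity argument.
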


\begin{proof}

Similar to the proof of Lemma \ref{l1}, by $\mathcal{J}_{k_n}'(v_n)=0$ and $\mathcal{J}_{k_n}(v_n)\leq C$ we prove that $\{v_n\}$ is bounded in $E$.  Using the changing variables $u=G^{-1}(v)\in E$ and \eqref{g1}, we have
\begin{align}\label{g2}
  C&\geq\int_{\mathbb{R}^N}\left(|\nabla v_n|^2+\widetilde{V}(x)|G^{-1}(v_n)|^2\right)\notag\\
  &=\int_{\mathbb{R}^N}g^2(u_n)|\nabla u_n|^2+\widetilde{V}(x)u_n^2\notag\\
  &=\int_{\mathbb{R}^N}(1-k_nu_n^2)|\nabla u_n|^2+\widetilde{V}(x)u_n^2\text{.}
\end{align}
This yields the boundedness of $\{u_n\}$ in $E$. This, together with the  compactness of the embedding $E\hookrightarrow L^p$ with $p\in[2,2^*)$, up to subsequence, there exists a $u_0\in E$ such that $u_n\rightarrow u_0$ in $L^p$, $p\in[2,2^*)$. Furthermore, the continuity of $E\hookrightarrow L^p$ for all $p\in[2,2^*]$, combined with Moser's  iteration in lemma \ref{lem41} and \eqref{g2}, implies that
\begin{align}\label{g3}
  \|v_n\|_\infty\leq C\text{,\quad}\|v\|_\infty\leq C
\end{align}
for some constant $C$ independent of $n$.

Now, we show that the functional $\mathcal{I}_0: E\rightarrow\mathbb{R}$,
\begin{align*}
  \mathcal{I}_0(u)=\frac{1}{2}\int_{\mathbb{R}^N}|\nabla u|^2+V(x)u^2-\int_{\mathbb{R}^N}F(u)
\end{align*}
admits a critical point $u_0\in E$. Firstly, we choose $\varphi=g(G^{-1}(v_n))\psi$, where $\psi\in C_0^\infty(\mathbb{R}^N)$. Testing $\mathcal{J}_{k_n}'(v_n)=0$ with $\varphi$, by $u_n=G^{-1}(v_n)$ and \eqref{g3} we have
\begin{align}\label{limitequation}
  0&=\langle \mathcal{J}_{k_n}'(v_n),\varphi\rangle\notag\\
  &=\int_{\mathbb{R}^N} g(u_n)g'(u_n)|\nabla u_n|^2\psi+\int_{\mathbb{R}^N} g^2(u_n)\nabla u_n\cdot\nabla\psi+\int_{\mathbb{R}^N} V(x)u_n\psi-\int_{\mathbb{R}^N} f(u_n)\psi\notag\\
  &=-k_n\int_{\mathbb{R}^N} u_n|\nabla u_n|^2\psi +\int_{\mathbb{R}^N}(1-k_n u_n^2)\nabla u_n\cdot \nabla\psi+\int_{\mathbb{R}^N} V(x)u_n\psi-\int_{\mathbb{R}^N} f(u_n)\psi\notag\\
  &=\int_{\mathbb{R}^N}\nabla u_n\cdot\nabla\psi+V(x)u_n\psi-\int_{\mathbb{R}^N} f(u_n)\psi+o(1)\notag\\
  &=\langle I_0'(u_0),\psi\rangle+o(1)\text{,}
\end{align}
where the following estimate holds
\begin{align*}
 -k_n\int_{\mathbb{R}^N} u_n|\nabla u_n|^2\psi -k_n\int_{\mathbb{R}^N} u_n^2\nabla u_n\cdot \nabla\psi\rightarrow 0\text{,\quad~~as~~}n\rightarrow\infty\text{.}
\end{align*}
In fact, by \eqref{g3} we also obtain that $\|u_n\|_{\infty}\leq C$ and $\|u_0\|_\infty\leq C$. Subsequently, \[\int_{\mathbb{R}^N}u_n|\nabla u_n|^2\psi\leq \|u_n\|_{\infty}\|\psi\|_{\infty}\int_{\mathbb{R}^N}|\nabla u_n|^2\leq C\text{.}\] It is clear that $k_n\int u_n|\nabla u_n|^2\psi\rightarrow 0$ and $k_n\int u_n^2\nabla u_n\cdot\nabla\psi\rightarrow 0$ similarly. That is, $u_0$ is a critical point of $\mathcal{I}_0$ and a solution of Eq. \eqref{eq33}. By performing additional approximation steps, we may have $u_0$ in the place of $\psi$ of \eqref{limitequation}:
\begin{align}\label{limiteq2}
  \int_{\mathbb{R}^N}|\nabla u_0|^2+V(x)u_0^2-\int_{\mathbb{R}^N}f(u_0)u_0=0\text{.}
  \end{align}
Setting $\varphi=g(G^{-1}(v_n))G^{-1}(v_n)$, using  $u_n=G^{-1}(v_n)$ and $\langle \mathcal{J}_{k_n}'(v_n),\varphi\rangle=0$ we obtain
\begin{align}\label{yuaneq}
 0&= \langle \mathcal{J}_{k_n}'(v_n),\varphi\rangle\notag\\
 &=-2k_n\int_{\mathbb{R}^N} u_n^2|\nabla u_n|^2 +\int_{\mathbb{R}^N}|\nabla u_n|^2+\int_{\mathbb{R}^N} V(x)u_n^2-\int_{\mathbb{R}^N} f(u_n)u_n\text{.}
\end{align}
Using the fact of $u_n\rightarrow u_0$ in $L^p$ with $p\in[2,2^*)$, it is clear that
\begin{align*}
  \int_{\mathbb{R}^N}f(u_n)u_n\rightarrow \int_{\mathbb{R}^N}f(u_0)u_0\text{,\quad} \int_{\mathbb{R}^N}V(x)u_n^2\rightarrow\int_{\mathbb{R}^N}V(x)u_0^2\text{.}
\end{align*}
From this, \eqref{limiteq2}, \eqref{yuaneq}, $\|u_n\|_{\infty}\leq C$ and lower semi-continuity, we derive
\begin{align*}
  \int_{\mathbb{R}^N}|\nabla u_n|^2\rightarrow \int_{\mathbb{R}^N}|\nabla u_0|^2\text{,\quad}k_n\int u_n^2|\nabla u_n|^2\rightarrow 0\text{.}
\end{align*}
In particular, we have $u_n\rightarrow u_0$ in $E$ and $\mathcal{J}_{k_n}(v_n)\rightarrow I_0(u_0)$ with $u_n=G^{-1}(v_n)$.
\end{proof}

\begin{proof}
 [Proof of Theorem \ref{th21}]
 We prove Theorem \ref{th21} in two steps:

\textbf{Step 1}: For $k\in(0,1)$, Lemma \ref{lem111} implies that $\mathcal{J}_k$ has a sequence of critical points $\{v_{k,j}\}$,
\[\mathcal{J}_k(v_{k,j})=c_j(k)\geq\alpha_j\rightarrow\infty\text{,\quad as~~}j\rightarrow\infty\text{.}\]
Subsequently, according to Lemmas \ref{lem41} and \ref{lemn}, for any fixed $m\in\mathbb{N}$, there exists an $M>0$ such that
\begin{align*}
  \|v_{k,j}\|_{\infty}\leq M\text{,\quad where~~} j\in\{j^*+1,\cdots,j^*+m\}\text{.}
\end{align*}
Therefore, there exists $k_0>0$ small enough such that for any $0<k<k_0$, we have
\[\|u_{k,j}\|_\infty=\|G^{-1}(v_{k,j})\|_\infty\leq\frac{2}{\sqrt{3}}\|v_{k,j}\|_\infty
\leq\frac{2}{\sqrt{3}}M\leq\sqrt{\frac{1}{\alpha k}}\text{.}\]
Then, for any $k\in(0,k_0)$, $u_{k,j}=G^{-1}(v_{k,j})(j=j^*+1,\cdots,j^*+m)$ are solutions of the original Eq. \eqref{eq1}. Therefore, $(i)$ of Theorem \ref{th21} holds.

\textbf{Step 2}: We now prove part $(ii)$ of Theorem \ref{th21}.
Using proposition \ref{th1-1} and Lemma \ref{lem41}, as $k_n\rightarrow 0^+$, there exists a $u_{0,j}\in E$ we obtain
\[\alpha_j\leq c_j(k_n)=\mathcal{J}_{k_n}(v_{k_n,j})\rightarrow I_0(u_{0,j})\text{,\quad} \mathcal{I}_0'(u_{0,j})=0\]
where $u_{k_n,j}=G^{-1}(v_{k_n,j})$ and $u_{k_n,j}\rightarrow u_{0,j}$ in $E$. Subsequently, we obtain, as $k\rightarrow 0^+$, $u_{k,j}$ converges to a solution $u_{0,j}$ of problem \eqref{eq33} with $\mathcal{I}_0(u_{0,j})\rightarrow\infty$ as $j\rightarrow\infty$.
\end{proof}

To remove the condition $f$ on near zero, we need the following lemma to establish the existence of infinitely many nontrivial solutions for Eq.\eqref{eq1}.
\begin{prop}(\cite[Theorem 9.12]{MR765240})\label{pro4}
Let $E$ be an infinite dimensional Banach space, $\Phi\in C^1(E,\mathbb{R})$ be even, satisfies Cerami condition and $\Phi(0)=0$. If $E=Y\oplus Z$ with $\dim Y<\infty$, and $\Phi$ satisfies
\begin{enumerate}[$(a)$]
  \item there are constants $\rho, \alpha>0$ such that $\Phi|_{\partial B_\rho\cap Z}\geq \alpha$,
  \item for any finite dimensional subspace $W\subset E$, there is an $R=R(W)$ such that $\Phi\leq 0$ on $W\setminus B_R(W)$,
\end{enumerate}
then $\Phi$ has a sequence of critical values $c_j\rightarrow\infty$.
\end{prop}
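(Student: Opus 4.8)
Since Proposition~\ref{pro4} is the classical $\mathbb{Z}_2$-symmetric mountain pass theorem, \cite[Theorem 9.12]{MR765240}, the plan is simply to run the standard genus-based minimax scheme; at this abstract level it is the skeleton of the argument already carried out for $\mathcal{J}_k$ in Lemma~\ref{lem111}. First I would fix an increasing sequence $Y=E_m\subset E_{m+1}\subset\cdots$ of finite-dimensional subspaces of $E$ with $\dim E_n=n$ and $E_n=Y\oplus(E_n\cap Z)$, and for each $n\ge m$ a closed complement $Z_n\subset Z$ of $E_n$ in $E$ (which exists because $E_n\cap Z$ is finite-dimensional); then, applying hypothesis $(b)$ to $W=E_n$, I would pick $R_n>\rho$ with $\Phi\le 0$ on $E_n\setminus B_{R_n}$. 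Writing $\gamma$ for the genus, I would next introduce the odd-map classes
\[
\Upsilon_n=\big\{H\in C(\overline{B_{R_n}}\cap E_n,E):H\ \text{odd},\ H=\mathrm{id}\ \text{on}\ \partial B_{R_n}\cap E_n\big\}
\]
and
\[
\Gamma_j=\big\{H\big((\overline{B_{R_n}}\cap E_n)\setminus Y'\big):n\ge j,\ H\in\Upsilon_n,\ Y'=-Y'\subset\overline{B_{R_n}}\cap E_n\ \text{open},\ \gamma(Y')\le n-j\big\},
\]
and define the candidate critical values $c_j=\inf_{A\in\Gamma_j}\sup_{u\in A}\Phi(u)$.

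Next I would record the elementary facts: each $\Gamma_j$ is nonempty (take $n=j$, $H=\mathrm{id}$, $Y'=\emptyset$), each $A\in\Gamma_j$ is a continuous image of a compact set so $\sup_A\Phi<\infty$ and $c_j\in\mathbb{R}$, and $\Gamma_{j+1}\subset\Gamma_j$ makes $(c_j)$ nondecreasing. The lower bound rests on the intersection (linking) property: for $j>m$ and every $A\in\Gamma_j$ one has $A\cap\partial B_\rho\cap Z_{j-1}\ne\emptyset$. I would establish this by the usual Borsuk--Ulam/genus argument --- the same one invoked in Step~1 of Lemma~\ref{lem111} through \cite[Lemma 2.4]{Liu2013} --- comparing $H\in\Upsilon_n$ with the projection of $E$ onto $E_{j-1}$ along $Z_{j-1}$. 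Since $Z_{j-1}\subset Z$, hypothesis $(a)$ then forces $\sup_A\Phi\ge\inf_{\partial B_\rho\cap Z}\Phi\ge\alpha$, whence $c_j\ge\alpha>0$ for every $j>m$.

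Finally I would show that each $c_j$ with $j>m$ is a critical value and that $c_j\to\infty$. Since $c_j\ge\alpha>0\ge\Phi$ on every $\partial B_{R_n}\cap E_n$, the odd deformation provided by the deformation lemma under the Cerami condition \cite{MR1400007,Bartolo1983} is the identity on those boundaries, so post-composition with it carries $\Gamma_j$ into itself; the standard contradiction argument then makes $c_j$ a critical value, and when $c_j=c_{j+1}=\cdots=c_{j+p}$, deleting a small symmetric neighbourhood of the critical set at that level and using subadditivity of $\gamma$ shows that set has genus at least $p+1$, giving the corresponding multiplicity. For the divergence I would argue by contradiction: if $c_j\le\bar c$ for all $j$, then by the Cerami condition $K=\{u:\Phi'(u)=0,\ \alpha/2\le\Phi(u)\le\bar c+1\}$ is compact and avoids $0$, so $\bar k:=\gamma(K)<\infty$; fixing a symmetric open $U\supset K$ with $\gamma(\overline U)=\bar k$ and noting $\Phi$ has no critical values in $[\alpha/2,\bar c+1]$ outside $U$, the deformation lemma yields an odd homeomorphism $\eta$, again the identity near each $\partial B_{R_n}\cap E_n$, with $\eta(\{\Phi\le\bar c+1\}\setminus U)\subset\{\Phi\le\alpha/2\}$; then choosing $j>m+\bar k$ and $A\in\Gamma_j$ with $\sup_A\Phi\le\bar c+1$, subadditivity of $\gamma$ gives $A\setminus\overline U\in\Gamma_{j-\bar k}$, hence $\eta(A\setminus\overline U)\in\Gamma_{j-\bar k}$ and $c_{j-\bar k}\le\alpha/2$, contradicting $c_{j-\bar k}\ge\alpha$; thus $c_j\to\infty$.

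I expect the main obstacle to be the intersection (linking) property of the second paragraph --- the one place where the Borsuk--Ulam theorem genuinely enters --- together with the bookkeeping surrounding the Cerami version of the deformation lemma: one must check that the deformation respects the boundary condition defining $\Upsilon_n$, and that subadditivity of the genus legitimately transports sets between the classes $\Gamma_j$ and $\Gamma_{j-\bar k}$.
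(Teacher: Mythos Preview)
Your outline is a correct sketch of the standard genus-based proof of the symmetric mountain pass theorem, but note that the paper does not prove Proposition~\ref{pro4} at all: it is simply quoted from \cite[Theorem 9.12]{MR765240} as a known abstract result, with no argument supplied. So there is nothing to compare against --- the paper treats this as a black box, whereas you have opened the box and reproduced the classical argument (minimax classes $\Gamma_j$, the Borsuk--Ulam linking step, the deformation-lemma contradiction for criticality and for $c_j\to\infty$). Your sketch is sound and indeed parallels the concrete instance worked out for $\mathcal{J}_k$ in Lemma~\ref{lem111}, but for the purposes of this paper a one-line citation suffices.
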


\begin{lem}
  \label{lem51}
  Let $W$ be a finite dimensional subspace of $E$, then $\mathcal{J}_k$ is anti-coercive on $W$, that is
  \[\mathcal{J}_k(v)\rightarrow-\infty\text{,\quad} \text{as}~~\|v\|\rightarrow\infty\text{.}\]
\end{lem}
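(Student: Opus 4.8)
The plan is to show that on the finite–dimensional space $W$ the functional $\mathcal{J}_k$ is dominated by an expression of the form $C_1\|v\|^2-C_2\|v\|^\mu$ with positive constants $C_1,C_2$, which forces $\mathcal{J}_k(v)\to-\infty$ since $\mu>\tfrac83>2$. First I would discard the quadratic part favourably: since $\widetilde F(t)=F(t)+\tfrac m2 t^2\ge F(t)\ge 0$ (positivity of $F$ is immediate from $(f_3)$) and Lemma~\ref{lem11}-$(iii)$ gives $|v|\le|G^{-1}(v)|\le\tfrac2{\sqrt3}|v|$, one obtains
\[
\mathcal{J}_k(v)\le\frac12\rn\Bigl(|\nabla v|^2+\tfrac43\widetilde V(x)v^2\Bigr)-\rn F(G^{-1}(v))\le\frac23\|v\|^2-\int_{\{|v|\ge 1\}}F(G^{-1}(v)).
\]

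Next I would extract the superquadratic growth. On $\{|v|\ge 1\}$ we have $|G^{-1}(v)|\ge|v|\ge 1$, so \eqref{F1} yields $F(G^{-1}(v))\ge C|G^{-1}(v)|^\mu\ge C|v|^\mu$; writing $\int_{\{|v|\ge 1\}}|v|^\mu=|v|_\mu^\mu-\int_{\{|v|<1\}}|v|^\mu$ and using $\mu>2$ (so that $|v|^\mu\le|v|^2$ on $\{|v|<1\}$) gives
\[
\int_{\{|v|\ge 1\}}F(G^{-1}(v))\ge C|v|_\mu^\mu-C|v|_2^2.
\]

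Finally I would invoke that on the finite–dimensional subspace $W$ all norms are equivalent: there exist $a,b>0$ with $|v|_\mu\ge a\|v\|$ and $|v|_2\le b\|v\|$ for all $v\in W$, hence
\[
\mathcal{J}_k(v)\le\Bigl(\tfrac23+Cb^2\Bigr)\|v\|^2-Ca^\mu\|v\|^\mu\longrightarrow-\infty\quad\text{as }\|v\|\to\infty,
\]
which is the assertion. The argument is essentially routine; the only point requiring a little care is passing from the integral over $\{|v|\ge 1\}$ to a genuine $L^\mu$-bound, handled by splitting off $\{|v|<1\}$ and using $\mu>2$. The one genuinely indispensable hypothesis is $\dim W<\infty$, which is what makes the $L^\mu$- and $E$-norms comparable — precisely what fails for general subspaces and explains why anti-coercivity is claimed only on finite–dimensional $W$ (as is all that is needed in Proposition~\ref{pro4}$(b)$).
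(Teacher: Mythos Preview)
Your proof is correct but follows a different route from the paper's. You obtain a direct quantitative bound $\mathcal{J}_k(v)\le C_1\|v\|^2-C_2\|v\|^\mu$ by combining \eqref{F1} with the equivalence of the $E$-norm and the $L^\mu$-norm on the finite-dimensional space $W$; note that $(f_2)$ together with $(f_3)$ forces $\mu\le p<2^*$, so $|\cdot|_\mu$ is a genuine norm on $E\supset W$ and the equivalence-of-norms step is legitimate. The paper instead argues sequentially: for $v_n\in W$ with $\|v_n\|\to\infty$ it normalizes $w_n=v_n/\|v_n\|$, uses $\dim W<\infty$ to pass to a strong limit $w\neq 0$, and then applies Fatou's lemma to $\widetilde F(G^{-1}(v_n))/\|v_n\|^2$, which blows up pointwise on $\{w\neq 0\}$ because $F(t)/t^2\to\infty$. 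Your approach is more elementary and gives an explicit rate of divergence; the paper's Fatou-type argument is softer but would still work under the weaker hypothesis $F(t)/t^2\to\infty$ without the specific power lower bound \eqref{F1}. Under the present assumptions both are available, so the two proofs are genuinely interchangeable.
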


\begin{proof}
  For any $\{v_n\}\subset W$ with $\|v_n\|\rightarrow\infty$, we set $w_n=\frac{v_n}{\|v_n\|}$. Then $w_n$ is bounded in $W$. Because $\dim W<+\infty$, there exists $w\in W\backslash \{0\}$ such that
  \[\|w_n- w\|\rightarrow 0\text{,}\qquad w_n(x)\rightarrow w(x) ~~\text{a.e.}~~x\in\R\text{.}\]
  For $x\in \{w\neq 0\}$, we have $|v_n(x)|\rightarrow\infty$ and $|G^{-1}(v_n)|\rightarrow\infty$. Therefore, for $n$ large enough we have $|v_n(x)|\geq 1$. By $(f_3)$, we derive that
  \[\lim_{t\rightarrow\infty}\frac{F(t)}{t^2}=+\infty\text{.}\]
This togethers with $(iii)$ in Lemma \ref{lem11}, implies
  \begin{align}
    \label{infty}
    \frac{\widetilde{F}( G^{-1}(v_n))}{\|v_n\|^2}&=\frac{F(G^{-1}(v_n))}{ |G^{-1}(v_n)|^2}\frac{|G^{-1}(v_n)|^2}{v_n^2}w_n^2+\frac{m|G^{-1}(v_n)|^2}{2v_n^2}w_n^2\rightarrow\infty\text{.}
  \end{align}
  By  Lemma \ref{lem11}-$(iii)$, the Fatou lemma and \eqref{infty}, we obtain
  \begin{align*}
    \mathcal{J}_k(v_n)&=\frac{1}{2}\rn |\nabla v_n|^2+\widetilde{V}(x)(G^{-1}(v_n))^2-\rn \widetilde{F}(G^{-1}(v_n))\\
    &\leq \|v_n\|^2\left(\frac{2}{3}-\int_{w\neq 0}\frac{\widetilde{F}(G^{-1}(v_n))}{\|v_n\|^2}\right)\rightarrow -\infty\text{.}
  \end{align*}
  Then we finished the proof.
\end{proof}

\begin{proof}
[Proof of Theorem \ref{th2}]
Under the assumptions of Theorem \ref{th2}, we know that $\mathcal{J}_k$ is an even functional satisfying the Cerami condition. Lemma \ref{lem51} implies that $\mathcal{J}_k$ satisfies condition $(b)$ of Proposition \ref{pro4}.
 For any $\nu$, let $Z_\nu=\overline{\text{span}}\{\varphi_\nu,\varphi_{\nu+1},\cdots\}$. Thanks to \cite[Lemma 2.5]{MR2548724}, for any $2\leq q<2^*$, we have that
 \begin{align}
   \label{52}
   \beta_\nu=\sup_{v\in Z_\nu,~~\|v\|=1}|v|_q\rightarrow 0\text{,}\qquad \text{as}~~\nu\rightarrow\infty\text{.}
 \end{align}
 Let
 \[
 Y=\text{span}\{\varphi_1,\cdots,\varphi_{\nu-1}\}\text{.}
 \]
 Then $Z_\nu\subset E^+$ and $E=Y\oplus Z_\nu$. For $v\in Z_\nu$, using Taylor expansion as in the proof of Lemma \ref{l2} and \eqref{grow}, we obtain
  \begin{align}\label{53}
    \mathcal{J}_k(v)&=\frac{1}{2}\rn |\nabla v|^2+V(x)G^{-1}(v)^2-\rn F(G^{-1}(v))\notag\\
    &=\frac{1}{2}\rn |\nabla v|^2+V(x)v^2-\rn F(G^{-1}(v))+o(\|v\|^2)\notag\\
    &\geq \kappa\|v\|^2-\varepsilon |v_n|_2^2-C_\varepsilon|v|_p^p+o(\|v\|^2)\notag\\
    &\geq \kappa\|v\|^2-\varepsilon \beta_\nu^2\|v\|^2-C_\varepsilon\beta_\nu^p\|v\|^p+o(\|v\|^2)\notag\\
    &\geq\vartheta\|v\|^2+o(\|v\|^2)\text{,}
  \end{align}
  where we take $\varepsilon$ small enough such that $\kappa-\varepsilon \beta_\nu^2\geq \vartheta>0$.  If $v\in Z_\nu\cap \partial B_\rho$, by \eqref{53}, there exists a constant $\alpha>0$ such that $\mathcal{J}_k(v)\geq \alpha$. So, $(a)$ of Proposition \ref{pro4} is verified. Therefore, $\mathcal{J}_k$ has a sequence of critical points $\{v_n\}$ such that $\mathcal{J}_k(v_n)\rightarrow\infty$. Let $u_n=G^{-1}(v_n)$, Lemma \ref{lem41} implies that $\{u_n\}$ is a sequence of solutions for Eq. \eqref{eq1} such that $\mathcal{I}(u_n)\rightarrow\infty$ because $\mathcal{J}_k(v_n)=\mathcal{I}_k(G^{-1}(v_n))$
\end{proof}

\section*{Acknowledgements}
L. Yin was supported by the Natural Science Foundation of Sichuan Province (No. 2024NSFSC1343) and National Natural Science Foundation of China (No. 12401140). X. Liu was supported by the National Natural Science Foundation of China(No. 12401128), and Y. Li was supported by the National Natural Science Foundation of China(No. 12301143).


\providecommand{\href}[2]{#2}
\providecommand{\arxiv}[1]{\href{http://arxiv.org/abs/#1}{arXiv:#1}}
\providecommand{\url}[1]{\texttt{#1}}
\providecommand{\urlprefix}{URL }

\end{document}